\newtheorem{example}{{\bf Example}} 
\newtheorem{conditions}[theorem]{{\bf Conditions}}
\definecolor{lightGray}{RGB}{220,220,220}
\definecolor{darkGray}{RGB}{180,180,180}
\begin{document} 
 
\newcommand{\myboldmath}[1]{\mathbf #1} 
\newcommand{\dom}[1]{\mbox{dom}\left(#1\right)} 
\newcommand{\range}[1]{\mbox{range}\left(#1\right)} 
\newcommand{\spanne}[1]{\mbox{span}\left(#1\right)} 
\newcommand{\R}{{\mathbb R}} 
\newcommand{\N}{{\mathbb N}} 
\renewcommand{\grad}{\nabla} 
\newcommand{\lb}[1]{\underline{#1}} 
\newcommand{\ub}[1]{\overline{#1}} 
\newcommand{\hessian}[1]{{\grad^2{#1}}} \newcommand{\compl}[1]{\overline{ #1 }}  
  \newcommand{\NonZeroesVec}{\compl{\IndepOfVars}} 
  \newcommand{\IndepOfVars}{{\cal I}} 
  \newcommand{\NonLinVars}{{\cal N}}   \newcommand{\LinVars}{{\cal L}} \newcommand{\indexSet}{{\cal J}}   \newcommand{\BarNonLinVars}{\bar{\cal N}} 
\newcommand{\Setn}{\left\{1, \dots, n\right\}} 
  \newcommand{\setTo}{\leftarrow} 
\newcommand{\func}{\varphi} 
\newcommand{\Order}{{\cal O}} 
\newcommand{\lmax}{\hat{l}} 
\newcommand{\doubleprime}{{\prime\prime}} 
\newcommand{\funcAtPoint}{f(x_0)} 
\newcommand{\gradAtPoint}[1]{{\left.\grad{ #1 }(x)\right|_{x_0}}} 
\newcommand{\hessianAtPoint}[1]{{\left.\hessian{ #1 }(x)\right|_{x_0}}} 
\newcommand{\matrixSet}[1]{{\cal #1 }} 

\newcommand{\IR}{IR} 
\newcommand{\IntervalHessian}{{I\hspace{-1.0mm}{\cal H}}}
\newcommand{\HessianSet}{{\cal H}}
\newcommand{\tight}[1]{{#1}^\star} 
\newcommand{\lambdaaaT}{\Lambda_s} 
\newcommand{\lambdaabba}{\Lambda_t} 
\newcommand{\reduced}[1]{\phi_{ #1 }} 
\newcommand{\reducedHessian}[2]{{\nabla}^{2}_{ #1 } #2} 
\newcommand{\reducedGrad}[2]{{\nabla}_{ #1 } #2} 
\newcommand{\minusset}{\backslash}  
\newcommand{\IgnoredIndices}{{\cal J}} 
\newcommand{\Mn}{\left\{1, \dots, n\right\}}
\newcommand{\yPrimeReduced}[1]{y^{\prime\,( #1 )}}
\newcommand{\assign}{\leftarrow} 

\newcommand{\hyperRec}{\mathcal{B}}
\newcommand{\openDomain}{\mathcal{U}}
\newcommand{\setN}{\mathcal{N}}

\newcommand{\ANew}{{\textrm{A}\!^\dagger}}
\newcommand{\AOld}{\textrm{A}}
\newcommand{\Ge}{\textrm{G}}
\newcommand{\HR}{\textrm{H}}

\newcommand{\NA}{N_{\mathrm{A}}(\func)}
\newcommand{\NGersh}{N_{\mathrm{G}}(\func)}
\newcommand{\NGershB}{N_{\mathrm{bG}}(\func)}
\newcommand{\NH}{N_{\mathrm{H}}(\func)}
\newcommand{\DeltaNA}{N_{\mathrm{\Delta A}}(\func)}

\newcommand{\hessianBackLineK}{[\grad^2{\func}]_{\text{b},k}}
\newcommand{\variableBack}{u}
\newcommand{\setOfUdatesBackLineK}{\U_k}
\newcommand{\setOfUdatesBackLineKDimD}{\setOfUdatesBackLineK^d}

\newcommand{\RuleSparseEvIntervaladdI}{ 1 }
\newcommand{\RuleSparseEvIntervalexpII}{ 2 }
\newcommand{\RuleSparseEvIntervalsquareII}{ 3 }
\newcommand{\RuleSparseEvIntervaloneOverI}{ 4 }

\newcommand{\dev}{\mathrm{dev}} 

\newcommand{\comment}[1]{ 
  \marginpar{ 
    \fbox{ 
      \parbox{2cm}{ #1 } 
    } 
  } 
} 
 
\newcommand{\WarningBox}[1]{ 
  \fbox{ 
    \parbox{\textwidth}{ 
      #1 
    } 
  } 
} 
\renewcommand{\WarningBox}[1]{} 
 
\definecolor{Gray}{gray}{0.9} 
\newcommand{\AdditionalInfoBox}[1]{ 
  \colorbox{Gray}{ 
    \parbox{\textwidth}{ 
      #1 
    } 
  } 
} 
\newcommand{\CommentNewOld}[2]{\textcolor{red}{#1 \sout{#2}}}
\newcommand{\CommentMmoBeforeSubmission}[2]{\textcolor{red}{#1}} \newcommand{\MmoSecondRound}[1]{
  \fbox{ 
    \parbox{\textwidth}{
      \textcolor{red}{
        #1
      }
    } 
  } 
} 
\renewcommand{\MmoSecondRound}[1]{}      
\newcommand{\MmoThirdRound}[1]{
  \fbox{ 
    \parbox{\textwidth}{
      \textcolor{red}{
        #1
      }
    } 
  } 
} 
\renewcommand{\MmoThirdRound}[1]{}
\title{Improved automatic computation\\ of Hessian matrix spectral bounds}
\author{Moritz Schulze Darup and Martin M\"onnigmann} 
\maketitle 
\begin{abstract} 
This paper presents a fast and powerful method for the computation of eigenvalue bounds for Hessian matrices $\hessian{\func(x)}$ of nonlinear functions $\func:\openDomain\subseteq\R^n\rightarrow\R$
on hyperrectangles $\hyperRec \subset \openDomain$. The method is based on a recently proposed procedure \cite{Monnigmann2011a} for an efficient computation of spectral bounds using extended codelists.
Both the approach from \cite{Monnigmann2011a} and the one presented here substantially differ from established methods 
in that they do deliberately not use any interval matrices and thus result in a favorable numerical complexity of order $\Order(n)\,N(\func)$, where $N(\func)$ denotes the number of operations needed to evaluate $\func$ at a point in its domain.

We improve the method presented in \cite{Monnigmann2011a} by exploiting sparsity, which naturally arises in the underlying codelists.  
The new method provides bounds that are as good as, or better than those from the most accurate existing method in about $82\,\%$ of the test cases.
\end{abstract}

\section{Introduction}
\label{sec:Introduction}

\MmoSecondRound{\begin{itemize}
  \item Check for self plagiarism. \textcolor{blue}{Die Einleitung enthaelt viele kopierte Saetze. Der restliche Text nicht.}
  \item Captions vereinheitlichen. Einige geaendert, aber Aenderungen nicht uebertragen auf die uebrigen.
  \item The 17 cases for the sparse product need to be commented. Why is the list of 17 cases complete? (I am aware that the corresponding question is answered for the sum, but that's not enough, I am afraid.)
  \textcolor{blue}{Die Liste fuer die Multiplikation ist nicht komplett. Es lassen sich theoretisch noch mehr Spezialfaelle auffinden, fuer die wir aber noch keine besseren Einschluesse kennen. Ich glaube es ist utopisch dem Leser alle 17 Faelle zu erlaeutern. Wer die Summe verstanden hat, kann m.E. auch grob die 14 nicht Spezialfaelle identifizieren. In Hinblick auf das Lemma in Anhang lassen sich dann auch die 3 verbleibenden nachvollziehen.}
  \item \textcolor{blue}{Verweis auf reelle Eigenwertschranken-Paper einbauen.}
\end{itemize}
}

We present important improvements for a recently proposed method (see \cite{Monnigmann2011a}) for the efficient calculation of spectral bounds for Hessian matrices on hyperrectangles.
The improvements build on a systematic treatment of sparsity of the involved matrices, which will be shown to result in significantly tighter eigenvalue bounds. The problem can concisely be summarized as follows. 
Let $\func:\openDomain\subseteq\R^n\rightarrow\R$ be a twice continuously differentiable function on an open set $\openDomain\subseteq\R^n$ and let
$\hyperRec= [\lb{x}_1, \ub{x}_1]\times\cdots\times[\lb{x}_n, \ub{x}_n]$ be a closed hyperrectangle in $\openDomain$. 
We seek bounds $\lb{\lambda},\ub{\lambda}\in\R$ such that the relations $\lb{\lambda} \leq \lambda \leq \ub{\lambda}$ hold for all eigenvalues $\lambda$ of all matrices $ H \in \{\hessian{\func(x)}\,| \,x \in \hyperRec \}$. 
More precisely, we solve the following problem: 
\begin{equation}
\label{eq:EigenboundsOnBox}
\text{Find} \,\, \lb{\lambda},\ub{\lambda}\in\R \,\, \text{such that} \,\,
    \lb{\lambda} \leq \min_{x \in \hyperRec} \lambda_{\min}(\hessian{\func}(x)) \,\,\, \text{and}  \,\,\,
    \max_{x \in \hyperRec} \lambda_{\max}(\hessian{\func}(x)) \leq  \ub{\lambda},
\end{equation}
where $\lambda_{\min}(H)$ and $\lambda_{\max}(H)$ denote the smallest and largest eigenvalue, respectively, of the symmetric matrix $H\in \R^{n \times n}$.
A bound $\lb{\lambda}$ (resp.\ $\ub{\lambda}$) is called {\it tight} if there exists at least one $x \in \hyperRec$ such that $\lb{\lambda} = \lambda_{\min}(\hessian{\func}(x))$ (resp. $\ub{\lambda} = \lambda_{\max}(\hessian{\func}(x))$). Note that the problem statement~\eqref{eq:EigenboundsOnBox} does not necessarily imply that $\lb{\lambda}$ and $\ub{\lambda}$ are tight. 

Eigenvalue bounds $\lb{\lambda}$, $\ub{\lambda}$ are used, for example, in numerical optimization methods to detect convexity, or to construct convex underestimators of nonconvex functions~\cite{Adjiman1998b,Adjiman1998a,Androulakis1995}. 
If (\ref{eq:EigenboundsOnBox}) yields $\lb{\lambda}\ge 0$ then $\func$ is convex on the
interior of the hyperrectangle $\hyperRec$. While no conclusion on the convexity can be drawn if 
\eqref{eq:EigenboundsOnBox} results in $\lb{\lambda}<0$, the bound $\lb{\lambda}$ can still be used to construct a convex underestimator for $\func$ on $\hyperRec$. Specifically, 
\begin{equation}\label{eq:alphaBB}
 \breve{\func}(x)= \func(x)- \frac{1}{2}\,\lb{\lambda}\,\sum \nolimits_{i=1}^n \left(\lb{x}_i- x_i\right)\left(\ub{x}_i-x_i\right)
\end{equation}
is convex, coincides with $\func$ at the vertices of $\hyperRec$, and bounds $\func$ from below everywhere else in $\hyperRec$. 
Since a large fraction of the total computation time is spent on the calculation of convex underestimators in global optimization methods~\cite{Adjiman1998b}, fast methods for solving \eqref{eq:EigenboundsOnBox} are of interest.
We briefly note that~\eqref{eq:EigenboundsOnBox} must also be solved in certain problems in automatic control
and systems theory. An illustrative example is given in~\cite{Monnigmann2008b}. 

Existing approaches to solving 
\eqref{eq:EigenboundsOnBox} proceed in two steps: First, 
a symmetric interval matrix (also called \textit{interval Hessian}) that contains all Hessians $\hessian{\func(x)}$ on $\hyperRec$ is calculated: 
\begin{equation}
\label{eq:symIntervalMatrix}  
\begin{array}{c}
\text{Find} \,\, \lb{H}=\lb{H}^T, \ub{H}=\ub{H}^T \! \in \R^{n \times n}  \,\, \text{such that} \,\,
    \lb{H}_{ij} \leq \left(\hessian{\func}(x)\right)_{ij} \leq \ub{H}_{ij} \\
     \text{for every} \,\, i,j \in \{1,\dots, n\} \,\,\text{and every} \,\, x \in \hyperRec.
      \end{array}
\end{equation}  
This task can efficiently be carried out 
by combining interval
arithmetics (IA, see \cite{Neumaier2008}, for example) and automatic differentiation (AD, see \cite{Fischer1995,Rall1981}, for example). 
In the second step, 
spectral bounds can be found by solving the following problem,  
which is similar to, but different from~\eqref{eq:EigenboundsOnBox}: 
  \begin{equation}
\label{eq:EigenboundsSymIntervalMatrix}
\text{Find} \,\, \lb{\lambda},\ub{\lambda}\in\R \,\, \text{such that} \,\,
    \lb{\lambda} \leq \min_{H \in \mathcal{H}} \lambda_{\min}(H) \,\,\, \text{and}  \,\,\,
    \max_{H \in \mathcal{H}} \lambda_{\max}(H) \leq  \ub{\lambda},
\end{equation}
where $\mathcal{H} = \{ H \in \R^{n \times n} \,|\, H_{ij} \in [\lb{H}_{ij},\ub{H}_{ij}], \, H=H^T \}$ is the set of all symmetric matrices that respect the bounds $\lb{H}$ and $\ub{H}$.
Various approaches exist to solving \eqref{eq:EigenboundsSymIntervalMatrix} (see, e.g., \cite{Adjiman1998a,Gershgorin1931,Hertz1992,Rohn1994}). However, since $\{\hessian{\func(x)}\,| \,x \in \hyperRec \} \subseteq \mathcal{H}$, problem \eqref{eq:EigenboundsSymIntervalMatrix} is conservative compared to the original problem~\eqref{eq:EigenboundsOnBox}.
In fact it is the very point of the method introduced in \cite{Monnigmann2011a} and refined in the present paper to avoid computing interval matrices of the form~\eqref{eq:symIntervalMatrix} when solving~\eqref{eq:EigenboundsOnBox} in order to avoid this conservatism.  

We briefly summarize the computational complexity of the existing methods.  
The computation of the matrices $\lb{H}$ and $\ub{H}$ in~\eqref{eq:symIntervalMatrix}  requires $\Order(n^2)\,N(\func)$ (resp.~$\Order(n)\,N(\func)$) operations if the forward (resp.~backward) mode of AD is used, where
$N(\func)$ denotes the number of operations needed to evaluate $\func$ at a point in its domain \cite{Fischer1995}.
After the interval Hessian has been calculated, 
solving~\eqref{eq:EigenboundsSymIntervalMatrix} requires between $\Order(n^2)$ operations for the interval variant of Gershgorin's circle criterion \cite{Adjiman1998a,Gershgorin1931} and $\Order(2^{n}\,n^3)$ operations for Hertz and Rohn's method \cite{Hertz1992,Rohn1994}. 
The latter method is an important benchmark in that it yields \textit{tight} spectral bounds for the matrix set $\mathcal{H}$. 
Albeit the conservatism in~\eqref{eq:symIntervalMatrix}, Hertz and Rohn's method therefore provides the best possible option to solve~\eqref{eq:EigenboundsOnBox} via \eqref{eq:symIntervalMatrix} and \eqref{eq:EigenboundsSymIntervalMatrix}.

The total numerical effort of any approach that uses~\eqref{eq:symIntervalMatrix} and~\eqref{eq:EigenboundsSymIntervalMatrix} corresponds to the sum of the efforts for calculating $\lb{H},\ub{H}$ and solving~\eqref{eq:EigenboundsSymIntervalMatrix}. 
Thus, the numerical effort for the established methods varies between $\Order(n)\,N(\func)+\Order(n^2)$ (backward mode AD combined with Gershgorin's circle criterion) and $\Order(n^2)\,N(\func)+ \Order(2^{n}\,n^3)$ operations (forward mode AD combined with Hertz and Rohn's method). 
The major advantage of the direct method presented in \cite{Monnigmann2011a} is its low computational complexity, 
which was shown to be of order $\Order(n)\,N(\func)$. 

It is the purpose of this paper to improve the method introduced in \cite{Monnigmann2011a} such that sparsity can exploited to find tighter eigenvalue bounds. The improvements do not increase the numerical effort compared to the original method in~\cite{Monnigmann2011a}. In fact, sparsity needs to be investigated once during the automatic generation of the extended codelist. The computations required to evaluate the codelist to obtain eigenvalue bounds on a specific hyperrectangle $\hyperRec$ are no more expensive than those for the 
non-sparse case treated in \cite{Monnigmann2011a}. 
While the computational effort remains the same, the improved method results in significantly tighter eigenvalue bounds than the original procedure from \cite{Monnigmann2011a}. 
To show this, we investigate 1522 examples (taken from the COCONUT collection \cite{Shcherbina2003}, see \cite{SchulzeDarup2014JGO} for details) and compare the eigenvalue bounds resulting from the improved procedure to those obtained with the original one \cite{Monnigmann2011a} and to bounds obtained with the interval Hessian~\eqref{eq:symIntervalMatrix} and Gershgorin's circle criterion and Hertz and Rohn's method. 

We summarize the major aspects of the direct method for the computation of eigenvalue bounds from~\cite{Monnigmann2011a} in Sect.~\ref{sec:oldArithmetic}.
Our main result, the exploitation of sparsity for the improvement of the eigenvalue bounds from~\cite{Monnigmann2011a}, is stated in Sect.~\ref{sec:ImprovedEigenvalueBounds}.  
We analyze 1522 numerical examples from \cite{Shcherbina2003,SchulzeDarup2014JGO} in Sect.~\ref{sec:Examples}\footnote{Results were obtained with \texttt{Jcodegen}, a code generator available from the authors on request, or to be used online on www.rus.rub.de/software/jcodegen. \texttt{Jcodegen} generates ANSI-C code for the algorithm described in Prop.~\ref{prop:newArithmetic} for a given function $\func$. In particular sparsity is treated automatically. The specific hyperrectangle $\hyperRec$ is passed to the resulting code as a runtime parameter.}.
Conclusions are given in Sect.~\ref{sec:Conclusion}.

\section{Notation and Preliminaries}
\label{sec:NotationPreliminaries}

We frequently use index sets $\indexSet \subseteq \setN$, where $\setN:=\N_{1,n}$ and where $\N_{m,n} :=\{i \in \N \,|\, m \leq i \leq n\}$.
The complement of an index set $\indexSet$ is defined as $\indexSet^c:=\setN \setminus \indexSet$.
The cardinality of an index set $\indexSet$ is denoted by $|\indexSet|$.

It is convenient to state eigenvalue bounds as intervals (e.g. $\lambda\in[\lb{\lambda}, \ub{\lambda}]$). 
Intervals $[\lb{a}, \ub{a}] \subset \R$ with $\lb{a}\leq \ub{a}$ are further abbreviated by 
$[a]:=[\lb{a}, \ub{a}]$ whenever appropriate. Interval equality
$[\lb{a}, \ub{a}]= [\lb{b}, \ub{b}]$, is understood to mean $\lb{a}= \lb{b}$ and 
$\ub{a}= \ub{b}$. We frequently carry out calculations on intervals with
standard interval arithmetics (IA) rules. The required rules are summarized in Lem.~\ref{lem:intervalArithmetic} and Tab.~\ref{tab:intervalArithmetic}.

\begin{lemma}[basic interval operations~\cite{Neumaier2008}]
\label{lem:intervalArithmetic}
Let $[a]= [\lb{a}, \ub{a}]$ and $[b]= [\lb{b}, \ub{b}]$ be intervals in $\R$. Let 
  $a\in[\lb{a},\ub{a}]$, $b\in[\lb{b}, \ub{b}]$, and $c\in \R$ be arbitrary real
  numbers. Then, the relations in the second column of Tab.~\ref{tab:intervalArithmetic} hold under the additional restrictions stated in the last column.
\end{lemma}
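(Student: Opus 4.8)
The plan is to read each row of Table~\ref{tab:intervalArithmetic} as the assertion that a real-valued function $g(a,b)$ (or a unary $g(a)$) maps the box $[\lb{a},\ub{a}]\times[\lb{b},\ub{b}]$ (or the interval $[\lb{a},\ub{a}]$) into the stated interval, and to verify each inclusion by locating the extrema of $g$ over that domain. Since $a$ and $b$ are \emph{arbitrary} points of their intervals, it suffices to show that $g(a,b)$ never leaves the claimed interval, which follows once the minimum and maximum of $g$ over the box are shown to coincide with the stated endpoints. The additional restrictions in the last column will turn out to be exactly the hypotheses under which these extrema are attained where claimed.

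First I would dispatch the monotone operations. For $g(a,b)=a+b$ the function is increasing in each argument, so it attains its extrema at the corners $(\lb{a},\lb{b})$ and $(\ub{a},\ub{b})$; for $g(a,b)=a-b$ it increases in $a$ and decreases in $b$, giving the corners $(\ub{a},\lb{b})$ and $(\lb{a},\ub{b})$. The scalar product $g(a)=c\,a$ is monotone with slope $c$, so the endpoints are preserved when $c\geq 0$ and swapped when $c<0$, which explains the case split recorded in the last column. The reciprocal $g(a)=1/a$ is strictly decreasing on any interval with $0\notin[\lb{a},\ub{a}]$, so its range is $[1/\ub{a},\,1/\lb{a}]$; here the restriction $0\notin[\lb{a},\ub{a}]$ is precisely what guarantees that $g$ is both defined and monotone throughout the interval.

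The remaining nonlinear entries require more care, and I expect multiplication to be the main obstacle. For $g(a,b)=a\,b$ the difficulty is that $g$ need not be monotone when either interval straddles zero, so the endpoint argument above does not apply directly. The key observation is that $g$ is affine in each variable separately (bilinear): fixing one argument leaves a univariate affine map, whose extremum over an interval is attained at an endpoint, and repeating for the second argument shows that the extrema over the rectangle are attained at its four vertices. This is exactly why the bound is written as the minimum and maximum over the four corner products $\lb{a}\lb{b}$, $\lb{a}\ub{b}$, $\ub{a}\lb{b}$, $\ub{a}\ub{b}$. An analogous piecewise-monotonicity argument covers any unary nonlinear rule such as squaring or the exponential, where for the square one must additionally account for the interior minimum at $0$ whenever $0\in[\lb{a},\ub{a}]$. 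Collecting the endpoint values obtained in each case reproduces precisely the intervals in the second column of Table~\ref{tab:intervalArithmetic}, completing the verification.
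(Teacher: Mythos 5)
The paper offers no proof of this lemma: it is imported verbatim from the standard interval-arithmetic literature (the citation to Neumaier), so there is nothing internal to compare your argument against. On its own merits your verification is correct and is the standard one. The monotone cases (rows 1, 3, 5, 6, 7, 8, 9 of Tab.~\ref{tab:intervalArithmetic}) follow exactly as you say from monotonicity of the map on the relevant domain, with the stated restrictions ($0\notin[a]$ for the reciprocal, $\lb{a}\geq 0$ for the square root, $\lb{a}>0$ for the logarithm) being precisely what guarantees the map is defined and monotone there. Your treatment of the product via bilinearity is the right idea; to make the two-step reduction airtight you should note that after optimizing out $a$ the resulting function $b\mapsto\max\{\lb{a}\,b,\ub{a}\,b\}$ is a maximum of affine functions, hence convex, and therefore attains its maximum at an endpoint of $[b]$ (dually for the minimum), which is what pins the extrema to the four corners. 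Your remark about the interior minimum at $0$ correctly accounts for the third subcase of row 4 ($0\in[a]$, $m$ even); the other two subcases are monotone. Two cosmetic points: the table contains no subtraction row, so that part of your argument is superfluous, and you should state explicitly that $\sqrt{\cdot}$, $\ln$, and $a+c$ are covered by the monotone case rather than the ``nonlinear'' one, since no interior critical points arise for them.
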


\begin{table}[htp] 
\caption{Basic interval arithmetic.}
\label{tab:intervalArithmetic}
\centering
\small
\begin{tabular}{c|ll|ll|c}
no. & \multicolumn{2}{c|}{operation / bounds}  & \multicolumn{2}{c|}{definition} & restriction\\
\hline 
\hline
1 & $a+b$ & $\in[a]+[b]$ & \multicolumn{2}{l|}{$:=[\lb{a}+\lb{b},\ub{a}+\ub{b}]$} \\
2 & $a\,b$ & $\in[a]\,[b]$ & \multicolumn{2}{l|}{$:=[\min\{\lb{a}\,\lb{b},\lb{a}\,\ub{b},\ub{a}\,\lb{b},\ub{a}\,\ub{b}\},\max\{\lb{a}\,\lb{b},\lb{a}\,\ub{b},\ub{a}\,\lb{b},\ub{a}\,\ub{b}\}]$} \\
3 & $1/a$ & $\in 1/[a]$ & \multicolumn{2}{l|}{$:=[1/\ub{a},1/\lb{a}]$} & $0 \notin [a]$ \\
4 & $a^m$ & $\in[a]^m$ & $:= [\lb{a}^m,\ub{a}^m]$ & if $\lb{a}>0$ or $m$ odd \\
& & & $:= [\ub{a}^m,\lb{a}^m]$ & if $\ub{a}<0$ and $m$ even \\
& & & $:= [0,\max\{\lb{a}^m,\ub{a}^m \}]$ & otherwise\\
5 & $\sqrt{a}$ & $\in\sqrt{[a]}$ & $:=[\sqrt{\lb{a}},\sqrt{\ub{a}}]$ & & $\lb{a} \geq 0$\\
6 & $\exp{a}$ & $\in\exp([a])$ & $:=[\exp(\lb{a}),\exp(\ub{a})]$ & & \\
7 & $\ln(a)$ & $\in\ln([a])$ & $:=[\ln(\lb{a}),\ln(\ub{a})]$ & & $\lb{a} > 0$\\
8 & $a+c$ & $\in[a]+c$ & $:=[\lb{a}+c,\ub{a}+c]$ & & \\
9 & $c\,a$ & $\in c\,[a]$ & $:=[c\,\lb{a},c\,\ub{a}]$ & if $c \geq 0$ \\
& & & $:=[c\,\ub{a},c\,\lb{a}]$ & otherwise
\end{tabular} 
\end{table}

A lower case letter surrounded by brackets may refer to a real interval $[x]=[\lb{x},\ub{x}] \subset \R$ (as introduced above) or 
a hyperrectangle $[x]=[\lb{x}_1,\ub{x}_1] \times \dots \times [\lb{x}_n,\ub{x}_n] \subset \R^n$ (with $n \geq 2$).
In the latter case, the interval operations listed in Tab.~\ref{tab:intervalArithmetic} are understood to apply to every component. 
For a hyperrectangle $[x]\subset \R^n$ and a nonempty index set $\indexSet \subseteq \setN$ with the $m$ elements $j_1<\dots<j_m$, the term $[x_\indexSet]$ refers to the hyperrectangle
$[x_\indexSet] = [x_{j_1}] \times \dots \times [x_{j_m}]$.

It is furthermore convenient to use \textit{null matrices} in $\R^{m \times r}$, which we denote by $0_{m,r}$, when dealing with sparsity. 
For the special cases $m=0$ or $r=0$ we obtain an \textit{empty matrix}. 
Formally, the empty square matrix $0_{0,0}$ has no eigenvalues. It proves useful to assign the eigenvalue bounds $[\lambda]=[0,0]$ to it. 
Finally, the Cartesian unit vector along the $k$-th direction is denoted by $e_k \in \R^n$. 

\section{Direct computation of eigenvalue bounds for Hessian matrices on hyperrectangles}
\label{sec:oldArithmetic}

We summarize the method introduced in~\cite{Monnigmann2011a} for the direct solution of~\eqref{eq:EigenboundsOnBox} as needed in the present paper. 
We  assume the function $\func$ can be evaluated at an arbitrary point $x\in\openDomain$ by carrying out a finite sequence of operations of the form 
\begin{equation}
\label{eq:codelist}
  \begin{array}{rcl}
        y_1 &=& x_1 \\ 
            &\vdots& \\ 
        y_n &=& x_n \\ 
    y_{n+1} &=& \Phi_{n+1} (y_1, \dots, y_n) \\ 
    y_{n+2} &=& \Phi_{n+2} (y_1, \dots, y_n, y_{n+1}) \\ 
            &\vdots& \\  
    y_{n+t} &=& \Phi_{n+t} (y_1, \dots, y_n, y_{n+1}, \dots, y_{n+t-1}) \\ 
    \func &=& y_{n+t} 
  \end{array}
\end{equation}
where each $\Phi_{n+k}$, $k=1,\dots,t$, represents one of the elementary operations listed in the first column of Tab.~\ref{tab:oldArithmetic}. 
We treat the same operations as in~\cite{Monnigmann2011a} for ease of comparison. Note that additional unary operations can be added according to the rules given in \cite{Monnigmann2011a}. 
We refer to~\eqref{eq:codelist} as the \textit{codelist} of the function $\varphi$.

The codelist~\eqref{eq:codelist} can be used to evaluate the function value $\func(x)$ at a specific point $x$ in its domain.
Using automatic differentiation (AD) \cite{Rall1981} the codelist~\eqref{eq:codelist} can be extended in such a way that the gradient $\grad \func(x)$ or the Hessian $\hessian \func(x)$ at the point $x$ are calculated.
Moreover, using AD and interval arithmetic (IA), \eqref{eq:codelist} can be modified such that interval extensions, interval gradients or interval Hessians of $\func$ on hyperrectangles $\hyperRec\subset \openDomain$
are computed. In fact, extended codelists are commonly used to solve problem~\eqref{eq:symIntervalMatrix} as part of the established procedures for the computation of eigenvalue bounds (see, e.g., \cite{Adjiman1998a}).
In contrast, the method introduced in~\cite{Monnigmann2011a} only requires the interval gradient, but not the interval Hessian.
Essentially, the codelist is extended by arithmetic operations that compute the eigenvalue bounds for the Hessian of the intermediate function in every codelist line.
Formally, this leads to the extended codelist which we introduce in the following theorem. 
\begin{theorem}[algorithm for direct eigenvalue bound computation {\cite[Prop. 4.2]{Monnigmann2011a}}]
\label{thm:directArithmetic}
Assume $\func$ is twice continuously differentiable on $\openDomain$ and can be written as a codelist~\eqref{eq:codelist}.
Let $\hyperRec = [x_1] \times \dots \times [x_n] \subset \openDomain$  be arbitrary.
  Then, for all $x\in \hyperRec$, we have $\func(x)\in [\func]$,
  $\grad{\func}(x)\in[\grad{\func}]$, 
  and $[\lambda_{\min}(\hessian{\func}(x)),\lambda_{\max}(\hessian{\func}(x))] \subseteq [\lambda_\func]$, where 
  $[\func]$, $[\grad{\func}]$, and $[\lambda_\func]$ are calculated 
  by the following algorithm.  
  \begin{enumerate} 
  \item For $k= 1, \dots, n$, 
    set $[y_k]= [\lb{x}_k, \ub{x}_k]$,
    $[\grad y_k]= [e_k,e_k]$, and $[\lambda_k]= [0, 0]$. 
  \item For $k= n+1, \dots, n+t$,
    calculate $[y_k]$, $[\grad y_k]$ and $[\lambda_k]$
    according to the third, fourth, and fifth column of 
    Tab.~\ref{tab:oldArithmetic}.  
  \item Set $[\func]= [y_{n+t}]$, 
    $[\grad{\func}]= [\grad y_{n+t}]$, 
    and $[\lambda_\func]= [\lambda_{n+t}]$. 
  \end{enumerate} 

\end{theorem}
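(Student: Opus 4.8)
The plan is to establish the three enclosures simultaneously by induction on the codelist index $k$, following the three-step structure of the algorithm. Writing $y_k(x)$, $\grad y_k(x)$, and $\hessian{y_k}(x)$ for the intermediate quantity of line $k$ and its first two derivatives regarded as functions of $x \in \hyperRec$, the induction hypothesis at line $k$ asserts $y_k(x) \in [y_k]$, $\grad y_k(x) \in [\grad y_k]$, and $[\lambda_{\min}(\hessian{y_k}(x)), \lambda_{\max}(\hessian{y_k}(x))] \subseteq [\lambda_k]$ for every $x \in \hyperRec$; the claim for $\func$ then follows by taking $k = n+t$. The base case, lines $k = 1, \dots, n$, is immediate: $y_k(x) = x_k \in [x_k]$, the gradient is the constant vector $e_k$, and the Hessian vanishes identically so that its only eigenvalue is $0$, which matches the assignments of step~1.

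For the inductive step I would treat each elementary operation $\Phi_k$ of Tab.~\ref{tab:oldArithmetic} separately. The enclosures of $y_k(x)$ and $\grad y_k(x)$ are routine: applying the interval-arithmetic rules of Lem.~\ref{lem:intervalArithmetic} to the enclosures guaranteed for the arguments by the induction hypothesis, together with the chain and product rules for the gradient, reproduces the entries of the third and fourth columns. The substance of the proof lies in the eigenvalue enclosure, for which I would first write out the exact Hessian of $y_k$. For a unary operation $y_k = g(u)$ the chain rule gives $\hessian{y_k} = g'(u)\,\hessian{u} + g''(u)\,\grad u\,(\grad u)^T$, and for a product $y_k = uv$ one obtains in addition the symmetric rank-two term $\grad u\,(\grad v)^T + \grad v\,(\grad u)^T$.

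The key tool is Weyl's inequality, which bounds the extreme eigenvalues of a sum of symmetric matrices by the sums of the corresponding extreme eigenvalues and thereby reduces the task to bounding each summand. The eigenvalues of a scaled Hessian $g'(u)\,\hessian{u}$ are enclosed from the eigenvalue interval already available for $u$ and the interval extension of $g'(u)$; the rank-one term $g''(u)\,\grad u\,(\grad u)^T$ has eigenvalues $0$ and $g''(u)\,\|\grad u\|^2$, both enclosable from the available gradient and second-derivative bounds. The genuinely delicate case is the rank-two product term, whose two nonzero eigenvalues equal $\grad u^T\grad v \pm \|\grad u\|\,\|\grad v\|$; enclosing these from the interval gradients of $u$ and $v$ is the step where the conservatism of interval arithmetic enters, and it is precisely this term that the sparsity-aware refinement of the later sections tightens. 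Assembling the per-summand bounds through Weyl's inequality and verifying agreement with the fifth column of Tab.~\ref{tab:oldArithmetic} for each operation closes the induction.
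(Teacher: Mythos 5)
Your argument is correct and takes essentially the same route as the proof this theorem defers to (the paper itself only cites \cite[Prop.~4.2]{Monnigmann2011a} and gives no proof): induction over the codelist lines, Weyl's inequality to split each exact Hessian into the scaled inherited Hessian plus the rank-one term $g''(u)\,\grad u\,(\grad u)^T$ or the rank-two term $\grad u\,(\grad v)^T+\grad v\,(\grad u)^T$, and the spectral characterizations of those two terms, which are precisely what the operators $[\lambdaaaT(\cdot)]$ and $[\lambdaabba(\cdot,\cdot)]$ in \eqref{eq:lambdaaaT}--\eqref{eq:Lambda_abbaNotation} (i.e., \cite[Lems.~2.2 and 2.3]{Monnigmann2011a}) encode. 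Your identification of the rank-two product term as the source of the conservatism that the sparsity refinement later removes is also exactly right.
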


In Tab.~\ref{tab:oldArithmetic}, we use the interval operators $[\lambdaaaT([a])]$ and $[\lambdaabba([a],[b])]$, which are defined according to
\begin{align} 
  \label{eq:lambdaaaT}   
[\Lambda_s([a])]  &= \left\{
\begin{array}{ll}
\,[a]^2  & \text{if} \,\,\, m=1, \\
\,[0, \sum_{i= 1}^m \max \{\lb{a}_i^2, \ub{a}_i^2 \} ] \qquad \quad \, & \text{otherwise},
\end{array}\right.  \\
\label{eq:Lambda_abbaNotation} 
 \textrm{and} \quad [\Lambda_t([a], [b])] &= 
  \left\{  \begin{array}{ll}
\,2\,[a]\,[b]  & \text{if} \,\,\, m=1, \\
\,[-\beta, \beta]+\sum_{i=1}^m [\lb{a}_i, \ub{a}_i]\,[\lb{b}_i, \ub{b}_i]   & \text{otherwise}
\end{array}\right. 
\end{align} 
for hyperrectangles $[a],[b] \subset \R^m$, where $\beta= \sqrt{
    (\sum_{i=1}^m \max \{\lb{a}_i^2, \ub{a}_i^2 \}) 
    (\sum_{i=1}^m \max \{\lb{b}_i^2, \ub{b}_i^2 \}) }$.
We refer to \cite[Lems. 2.2 and 2.3]{Monnigmann2011a} for  details on $[\lambdaaaT([a])]$ and $[\lambdaabba([a],[b])]$.

\begin{table}[htp] 
\caption{Rules for the calculation of $y_k$, $[y_k]$, $[\grad y_k]$ and $[\lambda_k]$ in the $k$-th line of the codelist~\eqref{eq:codelist}. $[\grad y_k]$ refers to the interval gradient of line $k$ with respect to $x$. The interval operators $[\lambdaaaT([a])]$ and $[\lambdaabba([a], [b])]$ are defined in~\eqref{eq:lambdaaaT} and~\eqref{eq:Lambda_abbaNotation}.}
\label{tab:oldArithmetic}
    \centering
\small
\begin{tabular}{l@{\,\,}|l@{\,}|l@{\,}|l@{\,}|l} 
  {\tt op} $\Phi_k$& $y_k$        & $[y_k]$                  
                                  & $[\grad y_k]$ 
                                  & $[\lambda_k]$ \\\hline \hline 
  {\tt var}        & $x_k$        & $[x_k]$                      
                                  & $[e_k,e_k]$
                                  & $[0,0]$ \\
\hline   {\tt add}        & $y_i+ y_j$   & $[y_i]+ [y_j]$         
                                  & $[\grad y_i]+ [\grad y_j]$
                                  & $[\lambda_i]+ [\lambda_j]$  \\ 
  {\tt mul}        & $y_i\,y_j$   & $[y_i]\,[y_j]$           
                                  & $[y_j] [\grad y_i]\!+\! [y_i] [\grad y_j]$
                                  & $[y_j] [\lambda_i]+ [y_i] [\lambda_j]+ [\lambdaabba([\grad y_i], [\grad y_j])]$ \\ 
\hline
  {\tt powNat}     & $y_i^m$      & $[y_i]^m$          
                                  & $m\,[y_i]^{m-1}\,[\grad y_i]$
                                  & $m[y_i]^{m-2}((m\!-\!1)[\lambdaaaT([\grad y_i])]\!+\![y_i] [\lambda_i])$ \\ 

  {\tt oneOver}    & $1/y_i$      & $1/[y_i]$      
                                  & $-[y_k]^2\,[\grad y_i]$ 
                                  &  $[y_k]^2\,(2\,[y_k]\,[\lambdaaaT([\grad y_i])]-[\lambda_i])$ \\ 

  {\tt sqrt}       & $\sqrt{y_i}$ & $[\sqrt{[y_i]}]$           
                                  & $1/(2\,[y_k]) \,[\grad y_i]$
                                  & $1/(2\,[y_k])([\lambda_i]\!+\!1/(-2\,[y_i])[\lambdaaaT([\grad y_i])])$  \\ 
  {\tt exp}        & $\exp(y_i)$  & $[\exp([y_i])]$            
                                  & $[y_k]\,[\grad y_i]$
                                  & $[y_k]\,([\lambdaaaT([\grad y_i])]+[\lambda_i])$ \\ 
  {\tt ln}         & $\ln(y_i)$   & $[\ln([y_i])]$           
                                  & $1/[y_i]\,[\grad y_i]$
                                  & $1/[y_i]\,([\lambda_i]-1/[y_i]\,[\lambdaaaT([\grad y_i])])$  \\
                                    {\tt addC}   & $y_i+ c$     & $[y_i]\!+\![c,c]$      
                                  & $[\grad y_i]$ 
                                  & $[\lambda_i]$ \\
  {\tt mulByC} & $c\,y_i$     & $c\,[y_i]$       
                                  & $c\,[\grad y_i]$
                                  & $c\,[\lambda_i]$ 
\end{tabular} 
\end{table}

\MmoSecondRound{Double check if following lemma is really needed and what for. \textcolor{blue}{Lemma auskommentiert!!! Das Lemma wurde im Beweis von  Prop. 4.11 verwendet. Ich habe es nochmal gesondert aufgefuerht, da die Originale \cite[Lems. 2.2 and 2.3]{Monnigmann2011a} auf die Dimension $n$ baut. Im Beweis in Prop. 4.11 findet sich nun der direkte Verweis. Den Dimensionssprung bekommt der interessierte Leser auch hin.}}    

\section{Improved computation of eigenvalue bounds using sparsity}
\label{sec:ImprovedEigenvalueBounds}

If sparsity is exploited, tighter eigenvalue bounds can be obtained than those
that result from the method summarized in Section~\ref{sec:oldArithmetic}. 
This is evident from the following motivating example.

\begin{example}[method from~\cite{Monnigmann2011a} applied to  $\func(x)=x_1^2 + x_2^2$]
\label{example:BadEigvalApproxForSum}
Consider the function $\func:\R^2 \rightarrow \R$ with $\func(x)=x_1^2 + x_2^2$. 
Theorem~\ref{thm:directArithmetic} results in the following extended codelist. Note that the expressions for $y_k$ listed in~\eqref{eq:extendedCodelistExample1} do not result
from Thm.~\ref{thm:directArithmetic}, but are only given for illustration of the codelist~\eqref{eq:codelist} of $\func$.
\begin{equation}
\label{eq:extendedCodelistExample1}
\small
\begin{tabular}{l|l|l|l|l} 
 $k$ &  $y_k$        &  $[y_k]$   & $[\grad y_k]$  &  $[\lambda_k]$ \\\hline \hline 
1& $x_1 $          & $[x_1]$                      
                                  & $[e_1,e_1]$ 
                                  & $[0,0]$ \\
2 & $x_2 $        & $[x_2]$                       
                                  & $[e_2,e_2]$ 
                                  & $[0,0]$ \\

3 & $y_1^2$     & $[y_1]^2$          
                                  & $2\,[y_1]\,[\grad y_1]$ 
                                  & $2([\lambdaaaT([\grad y_1])]+[y_1]\,[\lambda_1])$ \\ 
                                  
4 & $y_2^2$      & $[y_2]^2$        
                                  & $2\,[y_2]\,[\grad y_2]$ 
                                  & $2([\lambdaaaT([\grad y_2])]+[y_2]\,[\lambda_2])$  \\ 
                                  
5 & $y_3+ y_4$    & $[y_3]+ [y_4]$        
                                  & $[\grad y_3]+ [\grad y_4]$ 
                                  & $[\lambda_3]+ [\lambda_4]$ \\ 
                                  \hline
                                  & $\func = y_5$ & $[\func]=[y_5]$ & $[\grad \func]=[\grad y_5]$ & $[\lambda_\func]=[\lambda_5]$  
\end{tabular} 
\end{equation}
Evaluating the extended codelist~\eqref{eq:extendedCodelistExample1} for the hyperrectangle $\hyperRec = [0,1] \times [0,1]$ by computing 
 $[y_k]$, $[\grad y_k]$, and $[\lambda_k]$ and storing the results line by line yields
\begin{equation}
\label{eq:extendedCodelistExample1Results}
 \small
\begin{tabular}{l@{\qquad}l@{\qquad}l} 
 $[y_1]= [0,1]$,   & $[\grad y_1]= ( [1,1] , [0,0] )^T$, & $[\lambda_1]=[0,0]$, \\
 $[y_2]= [0,1]$,     &$[\grad y_2]= ( [0,0] , [1,1] )^T$,
                                 & $[\lambda_2]=[0,0]$, \\

 $[y_3]= [0,1]$,    & $[\grad y_3]= ( [0,2] , [0,0] )^T$, & $[\lambda_3]=[0,2]$, \\ 
                                  
 $[y_4]= [0,1]$,   & $[\grad y_4]= ( [0,0] , [0,2] )^T$, &  $[\lambda_4]=[0,2]$, \\ 
                                 
$[y_5]= [0,2]$,   & $[\grad y_5]= ( [0,2] , [0,2] )^T$, & $[\lambda_5]=[0,4]$, 
\end{tabular} 
\end{equation}
where $[\lambdaaaT([\grad y_1])]=[0,1]$ and $[\lambdaaaT([\grad y_2])]=[0,1]$ according to Eq.~\eqref{eq:Lambda_abbaNotation}.
Thus, we obtain the eigenvalue bounds $[\lambda_\func]=[\lambda_5]=[0,4]$ for $\hessian(\func(x))$ on $\hyperRec$.
Now, consider the functions $g,h:\R^2 \rightarrow \R$ with $g(x)=x_1^2$ and $h(x)=x_2^2$. From
$$
  \hessian{g}(x)= \left(\begin{array}{cc} 
    2& 0 \\ 
    0& 0 
  \end{array}\right) \quad \text{and} \quad 
  \hessian{h}(x)= \left(\begin{array}{cc} 
    0& 0 \\ 
    0& 2 
  \end{array}\right),  
$$
we infer that both $\hessian{g}(x)$ and $\hessian{h}(x)$ have the 
eigenvalues $0$ and $2$ for every $x \in \hyperRec$. Hence, the eigenvalue bounds $[\lambda_g]=[\lambda_3]=[0,2]$ and $[\lambda_h]=[\lambda_4]=[0,2]$ that result in line 3 and 4 of extended codelist~\eqref{eq:extendedCodelistExample1} are tight.
The eigenvalue bounds $[\lambda_5]=[\lambda_3]+[\lambda_4]=[0,4]$ that result  in the subsequent line are conservative, however. In fact, the Hessian of $\func$ reads
$$
 \hessian{\func}(x)  = \hessian{g}(x)+
  \hessian{h}(x)= \left(\begin{array}{cc} 
    2& 0 \\ 
    0& 2 
  \end{array}\right)  
$$
for all $x \in \hyperRec$ and the tight eigenvalue bounds obviously read $[\lambda_\func^\ast]=[2,2]$.

\end{example}

The Hessian matrices $\hessian{g}(x)$ and  
$\hessian{h}(x)$ in Exmp.~\ref{example:BadEigvalApproxForSum} have zero 
eigenvalues which disappear when adding the two functions to $f(x)= g(x)+ 
h(x)$. The situation illustrated in Example 
\ref{example:BadEigvalApproxForSum} arises naturally  
in the codelists introduced in Section~\ref{sec:oldArithmetic}, because codelists 
build up functions of many variables from functions of very few of 
these variables.  
In order to mitigate eigenvalue bound overestimation in 
these cases, we need to consider functions like $g(x)$ and $h(x)$ in Exmp.~\ref{example:BadEigvalApproxForSum} as functions of only those variables that
they actually depend on nonlinearly. To this end, some simple terminology and intermediate results are introduced in Sect.~\ref{subsec:sparsityHandling}. Subsequently, sparse sums, products, and 
compositions are treated in Sects.~\ref{subsec:sparseBoundsSum}, \ref{subsec:sparseBoundsComposition}, and 
\ref{subsec:sparseBoundsProduct}, respectively.  
Section~\ref{subsec:NumericalComputation} summarizes how to compute the improved eigenvalue bounds based on the rules introduced in Sects.~\ref{subsec:sparseBoundsSum}--\ref{subsec:sparseBoundsProduct}. 

\subsection{Sparsity handling using reduced Hessians and reduced gradients}
\label{subsec:sparsityHandling}

As pointed out in Exmp.~\ref{example:BadEigvalApproxForSum}, sparsity occurs if functions depend at most linearly on some variables $x_i$, where \textit{at most linear dependence} is defined as follows.
\begin{definition}[at most linear dependence] 
Let $f:\openDomain\rightarrow\R$ be a continuously differentiable function on 
an open set $\openDomain \subseteq\R^n$. Let $i\in\setN$. The
function  
$f$ is said to depend at most linearly on $x_i$ if there exists a $c\in \R$ 
such that  
\begin{equation}
\label{eq:atMostLinear}
  \frac{\partial f}{\partial x_i}(x)= c \mbox{ for all } x\in \openDomain. 
\end{equation} 
The function $f$ obviously is independent of $x_i$ if~\eqref{eq:atMostLinear} holds with $c=0$.
\end{definition} 

Assume a function $f$ is known to depend at most linearly on $x_j$ for all $j \in \LinVars_f$, where $\LinVars_f \subseteq \setN$ is a given index set.
Then, only the eigenvalues of the \textit{reduced Hessian} (see Def.~\ref{def:reducedHessian}) associated with the index set $\indexSet = \LinVars^c_f$ are nontrivial, i.e., not necessarily equal to zero.

\begin{definition}[reduced Hessian $\reducedHessian{\indexSet}{f}(x)$] 
\label{def:reducedHessian}
  Let $f:\mathcal{U}\rightarrow\R$ be a twice continuously differentiable function on an 
  open set $\mathcal{U}\subseteq\R^n$. 
  Let $\indexSet \subseteq \setN$ 
  be an index set and let $m=|\indexSet|$.
If $m=0$ set $\reducedHessian{\indexSet}{f}(x) = 0_{0,0}$, otherwise 
   denote the $m$ 
  elements of $\indexSet$ by $j_1<\dots <j_{n-m}$ in  
  ascending order and define the reduced Hessian 
  $\reducedHessian{\indexSet}{f}(x)\in\R^{(n-m)\times(n-m)}$ by its  
  elements  
  \begin{equation*}  
    \left(\reducedHessian{\indexSet}{f(x)}\right)_{ik}= 
    \frac{\partial^2 f(x)}{\partial x_{j_i}\partial x_{j_k}},  
  \end{equation*} 
  where $i,k\in \N_{1,m}$.   
\end{definition} 
 
We also need to consider reduced gradient vectors. 
\begin{definition}[reduced gradient $\reducedGrad{\indexSet}{f}(x)$] 
  \label{def:ReducedGradientWithoutEmbedder} 
  Let $f:U\rightarrow\R$ be a continuously differentiable function on an 
  open set $\openDomain\subseteq\R^n$. 
  Let $\indexSet \subseteq \setN$ 
  be a nonempty index set and let $m=|\indexSet|$.
  Denote the $m$ 
  elements of $\indexSet$ by $j_1<\dots <j_{n-m}$ in  
  ascending order and define the reduced gradient
  $\reducedGrad{\indexSet}{f}(x)\in\R^{n-m}$ by its  
  elements  
  \begin{equation*}  
    \left(\reducedGrad{\indexSet}{f}(x)\right)_{i}= 
    \frac{\partial f(x)}{\partial x_{j_i}}, 
  \end{equation*} 
  where $i\in \N_{1,m}$.   
\end{definition} 

Note that $\indexSet$ may be empty in Def.~\ref{def:reducedHessian}, while there must exist at least one element in $\indexSet$ in Def.~\ref{def:ReducedGradientWithoutEmbedder}.
This difference arises since codelist lines may depend at most linearly on all variables $x_i$ but they are never independent of all $x_i$.

We can easily evaluate eigenvalue bounds for the  Hessian of a function from eigenvalue bounds for its reduced Hessian.
This is stated precisely in Lem.~\ref{lem:fromSparseToFull}. 

\begin{lemma}[spectral bounds for Hessian from reduced Hessian]
\label{lem:fromSparseToFull}
Let $f$ denote a twice continuously differentiable function 
$f:\openDomain\rightarrow \R$ on an open set $\openDomain$. 
Let the index set $\LinVars_f\subseteq \setN$  be such that $f$ depends at most linearly on $x_i$ for all $i \in \LinVars_f$.
Let $\hyperRec \subset \openDomain$ and let the interval $[\lambda_f^\dagger] \subset \R$ be such that
\begin{equation}
\label{eq:fConditionSparseEig}
\lb{\lambda}_f^\dagger \leq \min_{x \in \hyperRec} \lambda_{\min} ( \reducedHessian{\LinVars^c_f}{f(x)} ) \qquad \text{and} \qquad \max_{x \in \hyperRec} \lambda_{\max} ( \reducedHessian{\LinVars^c_f}{f(x)} ) \leq \ub{\lambda}_f^\dagger.
\end{equation} 
Then, the eigenvalues of the Hessian $\hessian f(x)$ on $\hyperRec$ lie in the interval
\begin{equation}
\label{eq:fromSparseToFull}
[\lambda_f] = \left\{ \begin{array}{ll}
\,[\lambda_f^\dagger] & \text{if} \,\, \LinVars_f = \emptyset,\\
\,[0,0] & \text{if} \,\, \LinVars_f = \setN,\\
\,[\min\{\lb{\lambda}_f^\dagger,0 \},\max\{\ub{\lambda}_f^\dagger,0 \}] & \text{otherwise.}
\end{array}\right.
\end{equation}
\end{lemma}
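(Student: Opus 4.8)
The plan is to relate the eigenvalues of the full Hessian $\hessian f(x)\in\R^{n\times n}$ to those of the reduced Hessian $\reducedHessian{\LinVars^c_f}{f(x)}$ by exploiting the structure forced by at most linear dependence. The key observation is that if $f$ depends at most linearly on $x_i$, then $\partial f/\partial x_i$ is constant, so $\partial^2 f/\partial x_i\partial x_j = 0$ for \emph{every} $j$. Hence the full Hessian, after a symmetric permutation that groups the indices in $\LinVars^c_f$ together and the indices in $\LinVars_f$ together, has the block form $\hessian f(x) = \reducedHessian{\LinVars^c_f}{f(x)} \oplus 0_{|\LinVars_f|,|\LinVars_f|}$; that is, it is block diagonal with the reduced Hessian in one block and a zero matrix in the other. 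Since eigenvalues are invariant under symmetric permutation (similarity by a permutation matrix), the multiset of eigenvalues of $\hessian f(x)$ is exactly the eigenvalues of $\reducedHessian{\LinVars^c_f}{f(x)}$ together with $|\LinVars_f|$ copies of $0$.

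With this structural fact in hand I would split into the three cases of~\eqref{eq:fromSparseToFull}. First, if $\LinVars_f = \emptyset$, then $\LinVars^c_f = \setN$ and the reduced Hessian equals the full Hessian, so $[\lambda_f]=[\lambda_f^\dagger]$ is immediate from~\eqref{eq:fConditionSparseEig}. Second, if $\LinVars_f = \setN$, then $f$ depends at most linearly on every variable, so $\hessian f(x)$ is the zero matrix for all $x\in\hyperRec$ and its only eigenvalue is $0$, giving $[\lambda_f]=[0,0]$; here the convention that the empty square matrix $0_{0,0}$ receives bounds $[0,0]$ keeps the reduced-Hessian quantities well defined. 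Third, in the generic case where $\LinVars_f$ is a proper nonempty subset, the spectrum of $\hessian f(x)$ is the union of the spectrum of the reduced Hessian and the singleton $\{0\}$ (with multiplicity $|\LinVars_f|\ge 1$). Therefore $\lambda_{\min}(\hessian f(x)) = \min\{\lambda_{\min}(\reducedHessian{\LinVars^c_f}{f(x)}),\,0\}$ and $\lambda_{\max}(\hessian f(x)) = \max\{\lambda_{\max}(\reducedHessian{\LinVars^c_f}{f(x)}),\,0\}$.

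To finish I would take the extremum over $x\in\hyperRec$ and use the hypotheses~\eqref{eq:fConditionSparseEig}. For the lower bound, for every $x\in\hyperRec$ we have $\lambda_{\min}(\hessian f(x)) = \min\{\lambda_{\min}(\reducedHessian{\LinVars^c_f}{f(x)}),0\} \ge \min\{\lb{\lambda}_f^\dagger,0\}$, since $\lambda_{\min}(\reducedHessian{\LinVars^c_f}{f(x)}) \ge \lb{\lambda}_f^\dagger$ and $\min$ is monotone in each argument; an entirely symmetric argument with $\max$ and $\ub{\lambda}_f^\dagger$ gives the upper bound. This yields exactly the interval $[\min\{\lb{\lambda}_f^\dagger,0\},\max\{\ub{\lambda}_f^\dagger,0\}]$ of the ``otherwise'' case and completes all three cases.

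I do not expect a serious obstacle; the content is essentially linear algebra once the block-diagonal structure is identified. The one point requiring care is the bookkeeping of the permutation and the index set $\LinVars^c_f$: one must argue that ``$f$ depends at most linearly on $x_i$'' forces the \emph{entire} $i$-th row and column of the Hessian to vanish, not merely the diagonal entry, and this is precisely where~\eqref{eq:atMostLinear} (constancy of $\partial f/\partial x_i$, hence vanishing of its further derivatives) is used. A secondary subtlety is ensuring the degenerate cases ($\LinVars_f$ empty or all of $\setN$, i.e.\ an empty zero block or an empty reduced block) are handled consistently with the $0_{0,0}\mapsto[0,0]$ convention, which is exactly why these two cases are separated out in~\eqref{eq:fromSparseToFull}.
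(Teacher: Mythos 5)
Your proposal is correct and follows essentially the same route as the paper: both arguments reduce to the observation that at most linear dependence on $x_i$ kills the entire $i$-th row and column of the Hessian, so that (after a permutation, which the paper hides behind a ``without loss of generality'' on the index ordering) $\hessian f(x)$ is block diagonal with the reduced Hessian and a zero block, whence $\lambda_{\min}(\hessian f(x))=\min\{\lambda_{\min}(\reducedHessian{\LinVars_f^c}{f(x)}),0\}$ and likewise for the maximum, and the three cases of~\eqref{eq:fromSparseToFull} follow by taking extrema over $\hyperRec$ and invoking~\eqref{eq:fConditionSparseEig}. Your explicit justification that constancy of $\partial f/\partial x_i$ forces the whole row and column to vanish, and your explicit handling of the permutation similarity, are points the paper leaves implicit, but the substance is identical.
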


\begin{proof}
We consider the cases in~\eqref{eq:fromSparseToFull} separately. $\LinVars_f = \emptyset$ implies $\LinVars_f^c = \setN$ and consequently $\hessian f(x) = \reducedHessian{\LinVars^c_f}{f(x)}$, which 
proves the first case.
In the second case, i.e., $\LinVars_f = \setN$, we have $\hessian f(x) = \reducedHessian{\LinVars_f}{f(x)}$.
Since $f$ depends at most linearly on $x_i$ for all $i \in \LinVars_f$, we find $\reducedHessian{\LinVars_f}{f(x)}=0_{n,n}$. 
Thus, the eigenvalue bounds $[\lambda_f]=[0,0]$ hold.
Regarding the third case, we note that $\emptyset \subset \LinVars_f \subset \setN$ implies $\emptyset \subset \LinVars_f^c \subset \setN$.
Thus, $m = |\LinVars_f|$ satisfies $0<m<n$. 
Without loss of generality we assume  $\LinVars_f^c=\N_{1,m}$.
Then
$$
\hessian f(x) = \left(\begin{array}{cc} 
\reducedHessian{\LinVars_f^c}{f(x)} & 0_{m,n-m}\\
0_{n-m,m} & \reducedHessian{\LinVars_f}{f(x)}
\end{array}\right) = \left(\begin{array}{cc} 
\reducedHessian{\LinVars_f^c}{f(x)} & 0_{m,n-m}\\
0_{n-m,m} & 0_{n-m,n-m}
\end{array}\right).
$$
Now consider an arbitrary but fixed $x \in \hyperRec$. We obtain
\begin{align}
\label{eg:minHessianFx}
\lambda_{\min} (  \hessian f(x) ) &= \min \{ \lambda_{\min} (  \reducedHessian{\LinVars_f^c}{f(x)} ), 0 \} \quad \text{and} \\
\label{eg:maxHessianFx}
 \lambda_{\max} (  \hessian f(x) ) &= \max \{ \lambda_{\max} (  \reducedHessian{\LinVars_f^c}{f(x)} ), 0 \}
\end{align}
based on the block-diagonal structure of $\reducedHessian{\LinVars_f^c}{f(x)}$.
Bounding~\eqref{eg:minHessianFx} below and bounding~\eqref{eg:maxHessianFx} above for all $x \in \hyperRec$ yields
$$
[\min_{x \in \hyperRec} \lambda_{\min} (  \hessian f(x) ) , \max_{x \in \hyperRec} \lambda_{\max} (  \hessian f(x) )] \subseteq [\min \{ \lb{\lambda}_f^\dagger, 0 \},\max \{ \ub{\lambda}_f^\dagger, 0 \}]
$$
according to Eqs.~\eqref{eg:minHessianFx} and~\eqref{eg:maxHessianFx} and condition~\eqref{eq:fConditionSparseEig}. 
\end{proof}

\subsection{Improved eigenvalue bounds for the sum of two functions}
\label{subsec:sparseBoundsSum}
We collect some recurring conditions first.
\begin{conditions}
\label{cond:ghSum}
Let $g$ and $h$ denote twice continuously differentiable functions 
$g:\openDomain\rightarrow \R$ and   
$h:\openDomain\rightarrow \R$ on an open set $\openDomain \subset \R^n$. 
Let the index sets $\LinVars_g\subseteq \setN$ and $\LinVars_h\subseteq \setN$ be such that $g$ (resp.~$h$) depends at most linearly
on $x_i$ for all $i \in \LinVars_g$ (resp.~all $i \in \LinVars_h$).
Moreover, let the index sets $\IndepOfVars_g \subseteq \LinVars_g$ and $\IndepOfVars_h \subseteq \LinVars_h$ with $\IndepOfVars_g \subset \setN$ and $\IndepOfVars_h \subset \setN$ be such that $g$ (resp.~$h$) is independent of $x_i$ for all $i \in \IndepOfVars_g$ (resp.~all $i \in \IndepOfVars_h$).
Let $\hyperRec \subset \openDomain$ and assume there exist intervals $[\lambda_g^\dagger] \subset \R$ and $[\lambda_h^\dagger] \subset \R$ such that
\begin{align}
\label{eq:gConditionSparseEig}
\lb{\lambda}_g^\dagger \leq \min_{x \in \hyperRec} \lambda_{\min} ( \reducedHessian{\LinVars^c_g}{g(x)} ) \qquad \text{and} \qquad \max_{x \in \hyperRec} \lambda_{\max} ( \reducedHessian{\LinVars^c_g}{g(x)} ) \leq \ub{\lambda}_g^\dagger, \\
\lb{\lambda}_h^\dagger \leq \min_{x \in \hyperRec} \lambda_{\min} ( \reducedHessian{\LinVars^c_h}{h(x)} ) \qquad \text{and} \qquad \max_{x \in \hyperRec} \lambda_{\max} ( \reducedHessian{\LinVars^c_h}{h(x)} ) \leq \ub{\lambda}_h^\dagger.
\end{align} 
\end{conditions}

Now assume Conds.~\ref{cond:ghSum} hold and we intend to calculate eigenvalue bounds for $\hessian f(x)$ on a hyperrectangle $\hyperRec$ for $f(x)= g(x)+ h(x)$. 
We could 
determine eigenvalue bounds for the full Hessians $\hessian g(x)$ and $\hessian h(x)$ with Lem.~\ref{lem:fromSparseToFull} and apply the rule for the eigenvalue bounds of the sum of full Hessians (line {\tt add} in Tab.~\ref{tab:oldArithmetic} reproduced from \cite{Monnigmann2011a}). However, we show in Lem.~\ref{lem:sparseTighterSum} below that it is advantageous to, roughly speaking, carry out calculations with the sparse Hessians as long as possible and to apply Lem.~\ref{lem:fromSparseToFull} as late as possible. 
We first state the rules for determining $\LinVars_f$, $\IndepOfVars_f$ and the eigenvalues of the reduced Hessian of $f$ in Lems.~\ref{lem:fAtMostLinearSum} and \ref{lem:sparseBoundsSum}, respectively. 
The trivial proof of Lemma~\ref{lem:fAtMostLinearSum} is omitted for brevity. 
\begin{lemma}[index sets for sums]
\label{lem:fAtMostLinearSum}
Assume Conds.~\ref{cond:ghSum} hold and consider the function $f:\openDomain \rightarrow \R$ with $f(x)=g(x)+h(x)$.
Let $\LinVars_f = \LinVars_g \cap \LinVars_h$ and $\IndepOfVars_f = \IndepOfVars_g \cap \IndepOfVars_h$.
Then, $f$ depends at most linearly on $x_i$ for all $i \in \LinVars_f$ and $f$ is independent of $x_i$ for all $i \in \IndepOfVars_f$.
\end{lemma}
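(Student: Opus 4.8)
The plan is to unwind the two definitions---at most linear dependence and independence---and exploit the linearity of partial differentiation; the statement is essentially immediate once both are made explicit.

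First I would fix an arbitrary index $i \in \LinVars_f = \LinVars_g \cap \LinVars_h$. Since $i \in \LinVars_g$, the definition of at most linear dependence supplies a constant $c_g \in \R$ with $\partial g / \partial x_i(x) = c_g$ for all $x \in \openDomain$; likewise $i \in \LinVars_h$ supplies a constant $c_h \in \R$ with $\partial h / \partial x_i(x) = c_h$ for all $x \in \openDomain$. Because $g$ and $h$ are (twice) continuously differentiable, the sum rule for partial derivatives applies, giving
$$ \frac{\partial f}{\partial x_i}(x) = \frac{\partial g}{\partial x_i}(x) + \frac{\partial h}{\partial x_i}(x) = c_g + c_h $$
for every $x \in \openDomain$. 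Since $c_g + c_h$ is a single real constant independent of $x$, this exhibits $f$ as depending at most linearly on $x_i$, with witnessing constant $c = c_g + c_h$. As $i$ was arbitrary in $\LinVars_f$, the first claim follows.

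For the independence claim I would argue identically, now fixing $i \in \IndepOfVars_f = \IndepOfVars_g \cap \IndepOfVars_h$: independence of $g$ and $h$ from $x_i$ is exactly the special case $c_g = 0$ and $c_h = 0$ of the above, whence $\partial f / \partial x_i(x) = 0$ for all $x$, i.e., $f$ is independent of $x_i$. There is no genuine obstacle here---this is why the authors call the proof trivial. The one point worth stating explicitly is that the \emph{intersection}, rather than the union, is the correct combination: a constant partial derivative of $f$ in direction $x_i$ requires constant partial derivatives of \emph{both} $g$ and $h$, since a variable on which only one summand depends nonlinearly will in general enter $f$ nonlinearly as well. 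Intersecting the index sets is precisely what guarantees that both summands satisfy the relevant constant-derivative condition simultaneously, after which the sum of two constants being again a constant closes the argument.
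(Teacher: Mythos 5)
Your proof is correct and is precisely the trivial argument the authors intended when they omitted the proof ``for brevity'': unwinding the definition of at most linear dependence, applying the sum rule for partial derivatives, and noting that independence is the $c=0$ special case. Nothing is missing, and your closing remark on why the intersection (rather than the union) of index sets is the right choice is a sensible clarification consistent with the paper.
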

\begin{lemma}[spectral bounds for reduced Hessian of sums]
\label{lem:sparseBoundsSum}
Assume Conds.~\ref{cond:ghSum} hold and consider the function $f:\openDomain \rightarrow \R$, $f(x)=g(x)+h(x)$.
Let $\LinVars_f = \LinVars_g \cap \LinVars_h$. Then, 
\begin{equation}
\label{eq:sparseBoundsF}
\lb{\lambda}_f^\dagger \leq \min_{x \in \hyperRec} \lambda_{\min} ( \reducedHessian{\LinVars^c_f}{f(x)} ) \qquad \text{and} \qquad \max_{x \in \hyperRec} \lambda_{\max} ( \reducedHessian{\LinVars^c_f}{f(x)} ) \leq \ub{\lambda}_f^\dagger,
\end{equation} 
where $[\lambda_f^\dagger]$ is computed according to the rules listed in Tab.~\ref{tab:CasesReducedBoundsSum}.
\end{lemma}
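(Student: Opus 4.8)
The plan is to exploit the additivity of second derivatives together with the fact that neither $g$ nor $h$ contributes curvature along its at-most-linear directions. Write $\indexSet := \LinVars_f^c = \LinVars_g^c \cup \LinVars_h^c$, using $\LinVars_f = \LinVars_g \cap \LinVars_h$ from Lem.~\ref{lem:fAtMostLinearSum}. Linearity of differentiation gives, entrywise for every $x \in \hyperRec$,
\[
\reducedHessian{\indexSet}{f(x)} = \reducedHessian{\indexSet}{g(x)} + \reducedHessian{\indexSet}{h(x)}.
\]
The first step is to relate each summand to the ``native'' reduced Hessians controlled by $[\lambda_g^\dagger]$ and $[\lambda_h^\dagger]$. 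Since $g$ depends at most linearly on every $x_i$ with $i \in \LinVars_g$, and $\indexSet \setminus \LinVars_g^c = \indexSet \cap \LinVars_g = \LinVars_h^c \cap \LinVars_g$, the matrix $\reducedHessian{\indexSet}{g(x)}$ is, after a symmetric permutation, exactly $\reducedHessian{\LinVars_g^c}{g(x)}$ with trailing zero rows and columns appended; the analogous statement holds for $h$.

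Second, I would order the indices of $\indexSet$ so that the common block $\LinVars_g^c \cap \LinVars_h^c$ comes first, the $g$-only block $\LinVars_g^c \cap \LinVars_h$ second, and the $h$-only block $\LinVars_h^c \cap \LinVars_g$ last, and read off the block structure. The decisive dichotomy is whether the common block $\LinVars_g^c \cap \LinVars_h^c$ is empty. If it is empty, the nonzero supports of $\reducedHessian{\indexSet}{g(x)}$ and $\reducedHessian{\indexSet}{h(x)}$ are disjoint, so $\reducedHessian{\indexSet}{f(x)}$ is block-diagonal with diagonal blocks $\reducedHessian{\LinVars_g^c}{g(x)}$ and $\reducedHessian{\LinVars_h^c}{h(x)}$. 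Its spectrum is then the union of the two spectra, and taking the extrema over $\hyperRec$ and over the two blocks yields the combined bound $[\min\{\lb{\lambda}_g^\dagger, \lb{\lambda}_h^\dagger\}, \max\{\ub{\lambda}_g^\dagger, \ub{\lambda}_h^\dagger\}]$, with the degenerate sub-cases in which one block is empty handled by returning the other interval. This is precisely the source of the improvement illustrated in Exmp.~\ref{example:BadEigvalApproxForSum}: no spurious zero eigenvalues are introduced.

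If the common block is nonempty, the two summands share curvature directions and a block-diagonal split is unavailable; here I would fall back on Weyl's inequalities
\[
\lambda_{\min}(A) + \lambda_{\min}(B) \leq \lambda_{\min}(A+B), \qquad \lambda_{\max}(A+B) \leq \lambda_{\max}(A) + \lambda_{\max}(B),
\]
applied to $A = \reducedHessian{\indexSet}{g(x)}$ and $B = \reducedHessian{\indexSet}{h(x)}$. Note that both $\LinVars_g^c$ and $\LinVars_h^c$ now contain the common block and are hence nonempty, so $[\lambda_g^\dagger]$ and $[\lambda_h^\dagger]$ are meaningful. Eigenvalue bounds for $A$ follow from those of $\reducedHessian{\LinVars_g^c}{g(x)}$ together with the zero-padding of step one: a zero eigenvalue must be admitted exactly when the $h$-only block $\LinVars_h^c \cap \LinVars_g$ is nonempty, giving $[\min\{\lb{\lambda}_g^\dagger, 0\}, \max\{\ub{\lambda}_g^\dagger, 0\}]$ in that case and $[\lambda_g^\dagger]$ otherwise, by the same block-diagonal reasoning as in Lem.~\ref{lem:fromSparseToFull}; symmetrically for $B$. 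Summing these intervals per Weyl reproduces the remaining rows of Tab.~\ref{tab:CasesReducedBoundsSum}.

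The routine parts are the permutation bookkeeping and the entrywise identity for second derivatives. I expect the main obstacle to be the exhaustive, mutually exclusive case split: one must verify that the conditions on the three blocks (equivalently, the inclusions among $\LinVars_g$, $\LinVars_h$, $\IndepOfVars_g$, $\IndepOfVars_h$) partition all possibilities and that each case delivers valid---and, where the block-diagonal structure is available, tight---bounds matching Tab.~\ref{tab:CasesReducedBoundsSum}. Particular care is needed at the boundaries where $\LinVars_g^c$ or $\LinVars_h^c$ becomes empty, so that the empty-matrix convention $[\lambda] = [0,0]$ and the reduced-Hessian dimensions remain consistent.
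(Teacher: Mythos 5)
Your proposal is correct and follows essentially the same route as the paper: the paper's own proof treats only case 4 of Tab.~\ref{tab:CasesReducedBoundsSum}, using exactly your block-diagonal argument for the situation $\LinVars_g^c \cap \LinVars_h^c = \emptyset$, and dismisses the remaining cases as analogous. Your three-block decomposition, the zero-padding identification of $\reducedHessian{\LinVars_f^c}{g(x)}$ with $\reducedHessian{\LinVars_g^c}{g(x)}$, and Weyl's inequalities for the nonempty-overlap cases 5--8 are precisely the intended completion and correctly reproduce all eight rows of the table, including the degenerate empty-block cases.
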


\begin{table}[htp]
\caption{Rules for the computation of eigenvalue bounds $[\lambda_f^\dagger]$ for the reduced Hessian $\reducedHessian{\LinVars^c_f}{f(x)}$ of a sum $f(x)=g(x)+h(x)$. Let $\LinVars_\cup$ be short for $\LinVars_\cup :=\LinVars_g \cup \LinVars_h$. See the end of Sect. 4.2 for a discussion of the eight cases.}
\label{tab:CasesReducedBoundsSum}
\centering
\small
\begin{tabular}{c|l|l}
case  & $[\lambda_f^\dagger]$ & condition \\
\hline
\hline 
1 & $[0,0]$ & $\LinVars_g = \setN  \wedge \LinVars_h= \setN$   \\
2 & $[\lambda_g^\dagger]$  & $\LinVars_g \subset \setN  \wedge \LinVars_h= \setN$   \\ 
3  & $[\lambda_h^\dagger]$ & $\LinVars_g = \setN  \wedge \LinVars_h \subset  \setN$    \\
4 & $[\min \{\lb{\lambda}_g^\dagger,\lb{\lambda}_h^\dagger\}, \max \{\ub{\lambda}_g^\dagger,\ub{\lambda}_h^\dagger\}]$ & $\LinVars_g \subset \setN  \wedge \LinVars_h \subset \setN \wedge \LinVars_\cup  = \setN$ \\  
5 & $[\lambda_g^\dagger] +[\lambda_h^\dagger]$ & $ \LinVars_\cup \subset \setN \wedge \LinVars_g = \LinVars_h $     \\ 
6 & $[\lambda_g^\dagger]+[\min \{\lb{\lambda}_h^\dagger,0\},\max \{\ub{\lambda}_h^\dagger,0\}]$ & $ \LinVars_\cup \subset \setN \wedge \LinVars_g \subset \LinVars_h $      \\ 
7 & $[\min \{\lb{\lambda}_g^\dagger,0\}, \max \{\ub{\lambda}_g^\dagger,0\}] + [\lambda_h^\dagger]$ & $ \LinVars_\cup \subset \setN \wedge \LinVars_h \subset \LinVars_g $      \\ 
8 &  $[\min \{\lb{\lambda}_g^\dagger,0\}, \max \{\ub{\lambda}_g^\dagger,0\}]+[\min \{\lb{\lambda}_h^\dagger,0\},\max \{\ub{\lambda}_h^\dagger,0\}]$   & $ \LinVars_\cup \subset \setN \wedge  \LinVars_g \nsubseteq 
 \LinVars_h \wedge \LinVars_h \nsubseteq \LinVars_g  $

\end{tabular} 
\end{table}

\begin{proof}
We prove the fourth case in Tab.~\ref{tab:CasesReducedBoundsSum} since it will be instrumental for Exmp.~\ref{exmp:RevisitEx1UsingSparsity}.
All other cases in Tab.~\ref{tab:CasesReducedBoundsSum} can be proven analogously.
The reduced Hessian of $f$ reads $\reducedHessian{\LinVars_f^c}{f(x)}=\reducedHessian{\LinVars_f^c}{g(x)}+\reducedHessian{\LinVars_f^c}{h(x)}$.
From $\LinVars_g \subset \setN$,  $\LinVars_h \subset \setN$, and $\LinVars_\cup = \LinVars_g \cup \LinVars_h  = \setN$, we infer $\emptyset \subset \LinVars_g \subset \setN$ and  $\emptyset \subset \LinVars_h \subset \setN$
and consequently $\emptyset \subset \LinVars_g^c \subset \setN$ and  $\emptyset \subset \LinVars_h^c \subset \setN$.
Thus, the cardinalities $r=|\LinVars_g^c|$ and $s=|\LinVars_h^c|$ satisfy $0<r<n$ and $0<s<n$.
Moreover, $\LinVars_g \cup \LinVars_h = \setN$ implies $\LinVars_g^c \cap \LinVars_h^c  = \emptyset$. Hence, there does not exist any index $i \in \setN$ such that both $i \in \LinVars_g^c $ and $i \in \LinVars_h^c$.
We assume $\LinVars_g^c=\N_{1,r}$ and $\LinVars_h^c = \N_{r+1,r+s}$ without loss of generality.
Note that $\LinVars_f^c = \LinVars_g^c \cup \LinVars_h^c $ implies $\LinVars_f^c = \N_{1,r+s}$ and $m= |\LinVars_f^c|=r+s$ under this assumption.
Thus, $\reducedHessian{\LinVars_f^c}{f(x)}$ equals
\begin{equation}\label{eq:blockMatrixReducedHessianSum}
  \left(\begin{array}{cc} 
    \reducedHessian{\LinVars_g^c}{g(x)} & 0_{r,s}\\
    0_{s,r} & 0_{s,s}
    \end{array}\right)   + \left(\begin{array}{cc} 
    0_{r,r} & 0_{r,s}\\
    0_{s,r} & \reducedHessian{\LinVars_h^c}{h(x)}
    \end{array}\right)  =\left(\begin{array}{cc} 
    \reducedHessian{\LinVars_g^c}{g(x)} & 0_{r,s}\\
    0_{s,r} & \reducedHessian{\LinVars_h^c}{h(x)}
  \end{array}\right).
\end{equation}
The block-diagonal structure implies 
\begin{align}
\label{eg:minHessianFxSum}
\lambda_{\min} (  \reducedHessian{\LinVars_f^c}{f(x)} ) &= \min \{ \lambda_{\min} (  \reducedHessian{\LinVars_g^c}{g(x)} ), \lambda_{\min} (  \reducedHessian{\LinVars_h^c}{h(x)} ) \} \quad \text{and} \\
\label{eg:maxHessianFxSum}
\lambda_{\max} (  \reducedHessian{\LinVars_f^c}{f(x)} ) &= \max \{ \lambda_{\max} (  \reducedHessian{\LinVars_g^c}{g(x)} ), \lambda_{\max} (  \reducedHessian{\LinVars_h^c}{h(x)}) \}
\end{align}
for an arbitrary but fixed $x \in \hyperRec$.
Bounding~\eqref{eg:minHessianFxSum} below and bounding~\eqref{eg:maxHessianFxSum} above for all $x \in \hyperRec$ yields
$$
[\min_{x \in \hyperRec} \lambda_{\min} (  \hessian f(x) ) , \max_{x \in \hyperRec} \lambda_{\max} (  \hessian f(x) )] \subseteq [\min \{ \lb{\lambda}_g^\dagger, \lb{\lambda}_h^\dagger \},\max \{ \ub{\lambda}_g^\dagger, \ub{\lambda}_h^\dagger \}]
$$
where we used Eqs.~\eqref{eg:minHessianFxSum} and~\eqref{eg:maxHessianFxSum} and Conds.~\ref{cond:ghSum}. 
Thus, the eigenvalues of $\reducedHessian{\LinVars_f^c}{f(x)}$ on $\hyperRec$ lie in the interval $[\lambda_f^\dagger]=[\min \{ \lb{\lambda}_g^\dagger, \lb{\lambda}_h^\dagger \},\max \{ \ub{\lambda}_g^\dagger, \ub{\lambda}_h^\dagger \}]$ as claimed in Tab.~\ref{tab:CasesReducedBoundsSum}.
\end{proof}

We anticipated the bounds from Lem.~\ref{lem:sparseBoundsSum} can be shown to be as tight as or tighter than those from the original method proposed in \cite{Monnigmann2011a} that does not account for sparsity. This can now be shown in Lemma~\ref{lem:sparseTighterSum} below. 
Recall the bounds in~\cite{Monnigmann2011a} result in $[\lambda_f] =[\lambda_g] + [\lambda_h]$  for $f(x)=g(x)+h(x)$ 
according to~\cite[Prop 3.2.(iii)]{Monnigmann2011a}.

\begin{lemma}[improved bounds for sums]
\label{lem:sparseTighterSum}
Assume Conds.~\ref{cond:ghSum} hold and let $f$, $[\lambda_f^\dagger]$, and $\LinVars_f$ be as in Lem.~\ref{lem:sparseBoundsSum}. Let $[\lambda_f]$, $[\lambda_g]$, and $[\lambda_h]$ be the eigenvalue bounds for the Hessians $\hessian{f}(x)$, $\hessian{g}(x)$, and $\hessian{h}(x)$ on $\hyperRec$, calculated according to
Eq.~\eqref{eq:fromSparseToFull}. Then,
\begin{equation}
\label{eq:sparseTighterSum}
[\lambda_f] \subseteq [\lambda_g] + [\lambda_h].
\end{equation}
\end{lemma}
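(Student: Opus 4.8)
The plan is to reduce the claim to a handful of elementary interval inequalities and then verify the inclusion~\eqref{eq:sparseTighterSum} case by case along the eight rows of Tab.~\ref{tab:CasesReducedBoundsSum}. Throughout, I would abbreviate the \emph{inflation to zero} of an interval $[a]=[\lb{a},\ub{a}]$ by $\widehat{[a]}:=[\min\{\lb{a},0\},\max\{\ub{a},0\}]$, so that the third branch of~\eqref{eq:fromSparseToFull} reads $[\lambda]=\widehat{[\lambda^\dagger]}$. With this notation each of $[\lambda_f]$, $[\lambda_g]$, $[\lambda_h]$ is obtained from the respective reduced bound by one of the three branches of~\eqref{eq:fromSparseToFull}, selected according to whether $\LinVars_f$, $\LinVars_g$, $\LinVars_h$ equals $\emptyset$, equals $\setN$, or lies strictly in between.

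First I would record the properties of $\widehat{\,\cdot\,}$ that drive every case: $0\in\widehat{[a]}$; the monotonicity $[a]\subseteq\widehat{[a]}$ with equality whenever $0\in[a]$; and three sub-distributivity inclusions $\widehat{[a]+[b]}\subseteq\widehat{[a]}+\widehat{[b]}$, $\widehat{[a]+\widehat{[b]}}\subseteq\widehat{[a]}+\widehat{[b]}$, and $\widehat{[a]\sqcup[b]}\subseteq\widehat{[a]}+\widehat{[b]}$, where $[a]\sqcup[b]:=[\min\{\lb{a},\lb{b}\},\max\{\ub{a},\ub{b}\}]$ is the interval hull appearing in case~4. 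Each of these reduces to the scalar facts that $\min\{u,0\}+\min\{v,0\}$ is a lower bound for $\min\{u+v,0\}$, for $\min\{u,v,0\}$, and for $\min\{u+\min\{v,0\},0\}$ (together with the mirror statements for the upper endpoints); these follow by inspection, and the case-4 variant with $\LinVars_f=\emptyset$ is subsumed since $\min\{u,v,0\}\le\min\{u,v\}$.

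Next I would use the row conditions to pin down which branch of~\eqref{eq:fromSparseToFull} applies to $g$, $h$, and $f$. The key structural facts are $\LinVars_f=\LinVars_g\cap\LinVars_h\subseteq\LinVars_g,\LinVars_h$ (Lem.~\ref{lem:fAtMostLinearSum}) and that the table conditions exclude degenerate combinations: for instance, in case~4 the requirement $\LinVars_\cup=\setN$ together with $\LinVars_g,\LinVars_h\subset\setN$ forces both $\LinVars_g$ and $\LinVars_h$ to be nonempty, hence $[\lambda_g]=\widehat{[\lambda_g^\dagger]}$ and $[\lambda_h]=\widehat{[\lambda_h^\dagger]}$ while $\LinVars_f\subset\setN$; and in cases~6--8 the strictness and incomparability conditions make the relevant sets nonempty proper subsets. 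Cases~1--3 then collapse to equalities ($[\lambda_f]$ equals $[0,0]$, $[\lambda_g]$, or $[\lambda_h]$, with the remaining summand equal to $[0,0]$), and in every further case the two possibilities $\LinVars_f=\emptyset$ and $\emptyset\subset\LinVars_f\subset\setN$ are handled by, respectively, the raw $[\lambda_f^\dagger]$ from Tab.~\ref{tab:CasesReducedBoundsSum} and its inflation $\widehat{[\lambda_f^\dagger]}$.

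Finally, in each surviving case I would substitute the table value of $[\lambda_f^\dagger]$ and the branch values of $[\lambda_g]$, $[\lambda_h]$, and match the required inclusion to one of the sub-distributivity facts: case~5 uses $\widehat{[\lambda_g^\dagger]+[\lambda_h^\dagger]}\subseteq\widehat{[\lambda_g^\dagger]}+\widehat{[\lambda_h^\dagger]}$, case~6 uses the mixed form $\widehat{[\lambda_g^\dagger]+\widehat{[\lambda_h^\dagger]}}\subseteq\widehat{[\lambda_g^\dagger]}+\widehat{[\lambda_h^\dagger]}$, and case~8 reduces to an \emph{equality} because $\widehat{[\lambda_g^\dagger]}+\widehat{[\lambda_h^\dagger]}$ already contains $0$ and is therefore fixed by $\widehat{\,\cdot\,}$. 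The single inequalities are all elementary; I expect the main obstacle to be the exhaustiveness bookkeeping, namely confirming for every row that the stated condition selects exactly the branches claimed and that the eight rows, refined by the empty/full/intermediate trichotomy for $\LinVars_f$, cover every admissible configuration of $(\LinVars_g,\LinVars_h)$ without overlap or omission.
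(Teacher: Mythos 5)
Your proposal is correct and follows essentially the same route as the paper's proof: determine which branch of Eq.~\eqref{eq:fromSparseToFull} applies to $f$, $g$, and $h$ from the row conditions of Tab.~\ref{tab:CasesReducedBoundsSum}, substitute the table value of $[\lambda_f^\dagger]$, and compare interval endpoints via elementary $\min$/$\max$ inequalities. The only difference is presentational: the paper works out case~4 in detail and declares the remaining cases analogous, whereas your inflation-to-zero operator and the three sub-distributivity inclusions package the same endpoint comparisons so that all eight rows follow uniformly.
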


\begin{proof}
We prove the relation for the fourth case in Tab.~\ref{tab:CasesReducedBoundsSum}.
The remaining cases can be proven analogously. 
As pointed out in the proof of Lem.~\ref{lem:sparseBoundsSum}, we have $\emptyset \subset \LinVars_g \subset \setN$ and  $\emptyset \subset \LinVars_h \subset \setN$.
Thus, the r.h.s.~in~\eqref{eq:sparseTighterSum} yields
\begin{align}
\nonumber
[\lambda_g] + [\lambda_h] &=[\min \{ \lb{\lambda}_g^\dagger, 0 \},\max \{ \ub{\lambda}_g^\dagger, 0 \} ] + [\min \{ \lb{\lambda}_h^\dagger, 0 \},\max \{ \ub{\lambda}_h^\dagger, 0 \} ] \\
\nonumber
&=  [\min \{ \lb{\lambda}_g^\dagger, 0 \} + \min \{ \lb{\lambda}_h^\dagger, 0 \} , \max \{ \ub{\lambda}_g^\dagger, 0 \}  + \max \{ \ub{\lambda}_h^\dagger, 0 \} ],\\
\label{eq:sparseTighterSumRHS}
&=  [\min \{ \lb{\lambda}_g^\dagger+\lb{\lambda}_h^\dagger, \lb{\lambda}_g^\dagger,\lb{\lambda}_h^\dagger,0 \}  , \max \{ \ub{\lambda}_g^\dagger+\ub{\lambda}_h^\dagger, \ub{\lambda}_g^\dagger,\ub{\lambda}_h^\dagger,0 \} ],
\end{align}
where the equations hold according to the third case in~\eqref{eq:fromSparseToFull}, by definition of the sum of two intervals (see Tab.~\ref{tab:intervalArithmetic}), and by definition of $\min \{ \cdot \}$ and $\max \{ \cdot \}$, respectively.
To evaluate the l.h.s.~in~\eqref{eq:sparseTighterSum}, we have to analyze the index set $\LinVars_f$. 
We obviously have $\LinVars_f = \LinVars_g \cap \LinVars_h \subset \setN$. Thus, the second case in Eq.~\eqref{eq:fromSparseToFull} does not apply.
However, from the conditions characterizing the fourth case in Tab.~\ref{tab:CasesReducedBoundsSum}, it is not clear whether $\LinVars_f = \emptyset$ or $\LinVars_f \supset \emptyset$.
Thus, according to~\eqref{eq:fromSparseToFull}, the l.h.s.~in~\eqref{eq:sparseTighterSum} results in 
$$
[\lambda_f] = \left\{ \begin{array}{ll}
\,[\lambda_f^\dagger] & \text{if} \,\, \LinVars_f = \emptyset,\\
\,[\min \{ \lb{\lambda}_f^\dagger, 0 \},\max \{ \ub{\lambda}_f^\dagger, 0 \} ] & \text{if} \,\, \emptyset \subset \LinVars_f \subset \setN.
\end{array}\right.
$$
However, since $[a] \subseteq [\min \{ \lb{a}, 0 \},\max \{ \ub{a}, 0 \} ]$, the relation
$[\lambda_f] \subseteq [\min \{ \lb{\lambda}_f^\dagger, 0 \},\max \{ \ub{\lambda}_f^\dagger, 0 \} ]$
holds in both cases.
Since $[\lambda_f^\dagger] = [\min \{\lb{\lambda}_g^\dagger,\lb{\lambda}_h^\dagger\}, \max \{\ub{\lambda}_g^\dagger,\ub{\lambda}_h^\dagger\}]$ according to Lem.~\ref{lem:sparseBoundsSum} (resp. Tab.~\ref{tab:CasesReducedBoundsSum}), we obtain
\begin{align}
\nonumber
[\lambda_f] & \subseteq [\min \{ \min \{\lb{\lambda}_g^\dagger, \lb{\lambda}_h^\dagger\}, 0 \}, \max\{ \max \{\ub{\lambda}_g^\dagger, \ub{\lambda}_h^\dagger\}, 0 \}],\\
\label{eq:sparseTighterSumLHS}
& = [\min \{ \lb{\lambda}_g^\dagger, \lb{\lambda}_h^\dagger, 0 \}, \max\{ \ub{\lambda}_g^\dagger, \ub{\lambda}_h^\dagger, 0 \}].
\end{align}
Comparing Eqs.~\eqref{eq:sparseTighterSumRHS} and~\eqref{eq:sparseTighterSumLHS} yields
$$
[\min \{ \lb{\lambda}_g^\dagger, \lb{\lambda}_h^\dagger, 0 \}, \max\{ \ub{\lambda}_g^\dagger, \ub{\lambda}_h^\dagger, 0 \}] \subseteq [\min \{ \lb{\lambda}_g^\dagger+\lb{\lambda}_h^\dagger, \lb{\lambda}_g^\dagger,\lb{\lambda}_h^\dagger,0 \}  , \max \{ \ub{\lambda}_g^\dagger+\ub{\lambda}_h^\dagger, \ub{\lambda}_g^\dagger,\ub{\lambda}_h^\dagger,0 \} ],
$$
which proves~\eqref{eq:sparseTighterSum}. 
\end{proof}

\begin{table}[htp]
\caption{Fundamental cases underlying Tab.~\ref{tab:CasesReducedBoundsSum}.}
\label{tab:BasicCasesSum}
\centering
\small
\begin{tabular}{c|l|l|l}
case  & reduced Hessian $\reducedHessian{\LinVars_f^c}{f(x)}$ &  contribution & condition \\
\hline
\hline 
(i) & $0_{0,0}$  & none  & $\LinVars_g^c = \emptyset \wedge  \LinVars_h^c = \emptyset$ \\
(ii) & $\reducedHessian{\LinVars_f^c}{g(x)}$ & first Hessian & $\LinVars_g^c \supset \emptyset  \wedge  \LinVars_h^c = \emptyset$   \\ 
(iii) & $\reducedHessian{\LinVars_f^c}{h(x)}$ &  second Hessian & $\LinVars_g^c = \emptyset  \wedge  \LinVars_h^c \supset \emptyset$    \\ 
(iv)  & $\reducedHessian{\LinVars_f^c}{g(x)}+\reducedHessian{\LinVars_f^c}{h(x)}$ & both Hessians & $\LinVars_g^c \supset \emptyset  \wedge  \LinVars_h^c \supset \emptyset  $   \\ 
\end{tabular} 
\end{table}

Lemmas~\ref{lem:sparseBoundsSum} and~\ref{lem:sparseTighterSum}
are based on the eight cases listed in Tab.~\ref{tab:CasesReducedBoundsSum}. 
  Since they are not obvious at first sight, it is instructive to see how these eight cases arise from the four simpler ones listed in Tab.~\ref{tab:BasicCasesSum}. 
  The first case in Tab.~\ref{tab:BasicCasesSum} applies if both $\reducedHessian{\LinVars_f^c}{g(x)}$ and $\reducedHessian{\LinVars_f^c}{h(x)}$ vanish because of $\LinVars_g^c= \LinVars_h^c= \emptyset$.
  Since these two conditions, i.e.\ $\LinVars_g^c= \emptyset\wedge \LinVars_h^c= \emptyset$, are equivalent to 
  $\LinVars_g = \setN \wedge \LinVars_h = \setN$, case (i) in Tab.~\ref{tab:BasicCasesSum} is equivalent to case 1 in Tab.~\ref{tab:CasesReducedBoundsSum}. 
    Analogously, cases (ii) and (iii) in Tab.~\ref{tab:BasicCasesSum}, where either $\reducedHessian{\LinVars_f^c}{g(x)}$ or $\reducedHessian{\LinVars_f^c}{h(x)}$ contribute to $\reducedHessian{\LinVars_f^c}{f(x)}$, are equivalent to cases 2 and 3 in Tab.~\ref{tab:CasesReducedBoundsSum}, respectively.
    It remains to relate case (iv) in Tab.~\ref{tab:BasicCasesSum} to cases 4--8 in Tab.~\ref{tab:CasesReducedBoundsSum}.  
  In fact, the conditions of the cases 4--8 in Tab.~\ref{tab:CasesReducedBoundsSum} all imply $\LinVars_g^c \supset \emptyset$ and $\LinVars_h^c \supset \emptyset$,
  which are the defining conditions for case (iv) in Tab.~\ref{tab:BasicCasesSum}. Figure~\ref{fig:fiveSparseCasesSum} illustrates that every instance of case (iv) from Tab.~\ref{tab:BasicCasesSum} actually uniquely belongs to one of the cases 4--8 from Tab.~\ref{tab:CasesReducedBoundsSum}.

\begin{figure}[htp]
\centering
\psfrag{1}[c][c]{\small case 4} \psfrag{2}[c][c]{\small case 5} \psfrag{3}[c][c]{\small case 6}  \psfrag{4}[c][c]{\small case 7} \psfrag{5}[c][c]{\small case 8}
	\includegraphics[width=\linewidth]{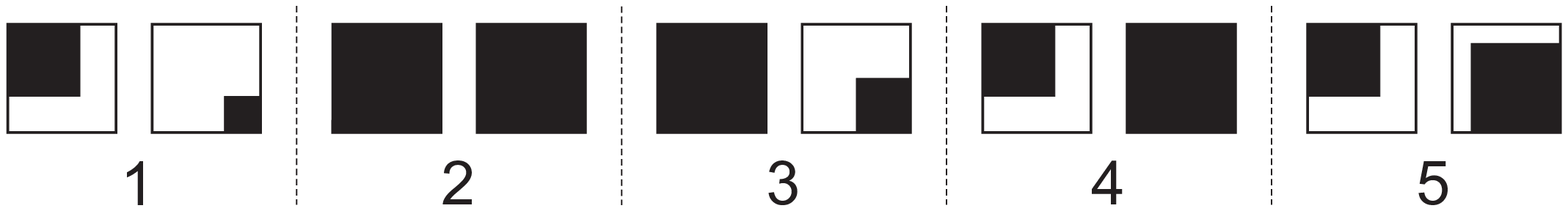} 
		\caption{Illustration of the sparsity patterns of the reduced Hessians $\reducedHessian{\LinVars_f^c}{g(x)}$ and $\reducedHessian{\LinVars_f^c}{h(x)}$ for the cases 4--8 in Tab.~\ref{tab:CasesReducedBoundsSum} and Lem.~\ref{lem:sparseBoundsSum}. White areas correspond to zero blocks in the Hessians, black areas to non-trivial blocks. The black areas in case 4 correspond to the non-trivial submatrices $\reducedHessian{\LinVars_g^c}{g(x)}$ and $\reducedHessian{\LinVars_h^c}{h(x)}$ on the l.h.s.\ of Eq.~\eqref{eq:blockMatrixReducedHessianSum}.}
		\label{fig:fiveSparseCasesSum}
\end{figure}

\subsection{Improved eigenvalue bounds for the composition of two functions}
\label{subsec:sparseBoundsComposition}

We collect some recurring conditions again first.

\MmoThirdRound{Noch Hinweis auf Software aufnehmen und ueberlegen, wie wir die Software anbieten koennen.}
\MmoThirdRound{Alle Lemmas usw.\ mit Kurzbezeichnungen versehen! Lemma~\ref{lem:sparseTighterComposition} in Prop.\ verwandeln, Prop. 4.18 entsprechend in ein Korollar verwandeln?}

\begin{conditions}
\label{cond:gComposition} 
Assume Conds.~\ref{cond:ghSum} hold.
Let $r:\mathcal{V} \rightarrow \R$ be a twice differentiable function on an open set $\mathcal{V} \subset \R$
and assume $g(x) \in \mathcal{V}$ for every $x \in \openDomain$. Moreover, assume there exist intervals $[r^{\prime}] \subset \R$ and $[r^{\prime\prime}] \subset \R$ and a hyperrectangle $[\grad g] \subset \R^n$  such that
\begin{equation}
\label{eq:boundsRandGradientG}
\lb{r}^\prime  \leq r^\prime (g(x))  \leq \ub{r}^\prime, \quad \lb{r}^{\prime\prime}  \leq r^{\prime\prime}  (g(x))  \leq \ub{r}^{\prime\prime}, 
\quad \textrm{and} \quad \lb{\grad g}_i \leq (\grad g(x))_i \leq \ub{\grad g}_i
\end{equation}
for every $x\in \hyperRec$ and every $i \in \setN$, where $r^\prime (z)$ and $r^{\prime\prime}  (z)$ refer to the first and the second derivative of $r(z)$, respectively.
\end{conditions}

The following lemma, which we state without proof, provides rules for the identification of at most linear dependencies and independencies of compositions. 
\begin{lemma}[index sets for compositions]
\label{lem:fAtMostLinearComposition}
Assume Conds.~\ref{cond:gComposition}  hold and consider the function $f:\openDomain \rightarrow \R$, $f(x)=r(g(x))$.
Let
\begin{equation} 
\label{eq:SetIandLComposition}
 \IndepOfVars_f = \IndepOfVars_g \qquad \textrm{and} \qquad \LinVars_f = \left\{\begin{array}{ll} 
\LinVars_g & \textrm{if r is an affine function,} \\
\IndepOfVars_g & \textrm{otherwise.}
\end{array}\right.
\end{equation}
Then, $f$ depends at most linearly on $x_i$ for all $i \in \LinVars_f$ and $f$ is independent of $x_i$ for all $i \in \IndepOfVars_f$.
\end{lemma}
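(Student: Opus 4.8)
The plan is to reduce all three claims to a single application of the chain rule. For $f(x)=r(g(x))$ we have $\partial f/\partial x_i(x) = r^\prime(g(x))\,\partial g/\partial x_i(x)$ for every $i \in \setN$ and every $x \in \openDomain$, so that each assertion about independence or at most linear dependence of $f$ on $x_i$ amounts to deciding when this product is identically zero or constant over $\openDomain$.

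First I would settle independence, i.e.\ $\IndepOfVars_f = \IndepOfVars_g$. For $i \in \IndepOfVars_g$ the function $g$ is independent of $x_i$, so $\partial g/\partial x_i \equiv 0$ on $\openDomain$; substituting into the chain rule gives $\partial f/\partial x_i \equiv 0$, hence $f$ is independent of $x_i$. Since independence is the special case $c=0$ of at most linear dependence, this argument simultaneously disposes of the non-affine branch of the $\LinVars_f$ claim, where $\LinVars_f = \IndepOfVars_g$: the same indices for which $\partial f/\partial x_i \equiv 0$ trivially satisfy the at most linear condition with $c=0$.

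It then remains to treat the affine branch. If $r$ is affine, say $r(z)=a z + b$, then $r^\prime(z)=a$ is constant. Taking $i \in \LinVars_f = \LinVars_g$, the definition of $\LinVars_g$ supplies a constant $c$ with $\partial g/\partial x_i \equiv c$, and the chain rule yields $\partial f/\partial x_i \equiv a\,c$, a constant. Thus $f$ depends at most linearly on $x_i$, which completes this branch.

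I do not expect a genuine obstacle here; the only subtle point is to \emph{see why the non-affine case must shrink the index set to $\IndepOfVars_g$} rather than retain $\LinVars_g$. For an index $i \in \LinVars_g \setminus \IndepOfVars_g$ one has $\partial g/\partial x_i \equiv c$ with $c \neq 0$, so $\partial f/\partial x_i(x) = c\,r^\prime(g(x))$, which need not be constant once $r^\prime$ is non-constant and $g$ varies over $\hyperRec$. Hence at most linear dependence cannot be guaranteed for such $i$, which justifies the case distinction in~\eqref{eq:SetIandLComposition} and shows the stated assignments are exactly those that follow from the chain rule alone.
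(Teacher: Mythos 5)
Your proof is correct. The paper states this lemma explicitly without proof, so there is nothing to compare against; your chain-rule argument, $\partial f/\partial x_i = r'(g(x))\,\partial g/\partial x_i$, combined with the observation that independence is the $c=0$ case of at most linear dependence, is exactly the standard argument the authors evidently had in mind when they called the result trivial. One minor remark: your final paragraph, explaining why the non-affine branch must shrink $\LinVars_f$ to $\IndepOfVars_g$, is useful motivation but is not part of what the lemma asserts --- the lemma only claims the stated index sets are \emph{valid} (sufficient), not maximal --- and there $g$ should be said to vary over $\openDomain$ rather than $\hyperRec$, since at most linear dependence is defined on all of $\openDomain$.
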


Bounds for compositions can now be calculated as follows. 
\begin{lemma}[spectral bounds for reduced Hessian of compositions]
\label{lem:sparseBoundsComposition}
Assume Conds.~\ref{cond:gComposition} hold and consider the function $f:\openDomain \rightarrow \R$, $f(x)=r(g(x))$.
Let $\LinVars_f$ be defined as in~Lem.~\ref{lem:fAtMostLinearComposition}. Then the bounds~\eqref{eq:sparseBoundsF} hold for
 $[\lambda_f^\dagger]$ computed according to the rules listed in Tab.~\ref{tab:CasesReducedBoundsComposition}.
\end{lemma}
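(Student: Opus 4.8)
The plan is to reduce the composition to the chain rule for second derivatives and then bound the two resulting summands separately, exactly mirroring the structure of the unary rules in Tab.~\ref{tab:oldArithmetic}. Writing $\indexSet := \LinVars_f^c$ and $v(x) := \reducedGrad{\indexSet}{g(x)}$, the first step is to differentiate $f(x)=r(g(x))$ twice, which gives
$$
\hessian{f}(x) = r''(g(x))\,\grad g(x)\,(\grad g(x))^T + r'(g(x))\,\hessian{g}(x),
$$
and, after deleting every row and column indexed outside $\indexSet$,
$$
\reducedHessian{\indexSet}{f(x)} = r''(g(x))\,v(x)\,v(x)^T + r'(g(x))\,\reducedHessian{\indexSet}{g(x)}.
$$
Thus $\reducedHessian{\indexSet}{f}$ is the sum of a scaled rank-one matrix and a scaled copy of the reduced Hessian of $g$, and the whole proof consists of bounding the spectra of these two summands and adding them.

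Second, I would split along the case distinction of Lem.~\ref{lem:fAtMostLinearComposition}. If $r$ is affine, then $r''\equiv 0$ and $\LinVars_f^c=\LinVars_g^c$, so the rank-one term drops out and $\reducedHessian{\LinVars_f^c}{f}=r'(g)\,\reducedHessian{\LinVars_g^c}{g}$; its eigenvalue bounds follow by scaling $[\lambda_g^\dagger]$ with the (degenerate) interval $[r']$ via the interval product of rule~2 in Tab.~\ref{tab:intervalArithmetic}, the sign of $r'$ deciding whether the lower and upper bounds swap. If $r$ is not affine, then $\LinVars_f^c=\IndepOfVars_g^c\supseteq\LinVars_g^c$ and both terms are kept.

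Third, in the non-affine case I would bound each summand. The matrix $v\,v^T$ is positive semidefinite of rank at most one, hence its spectrum lies in $[0,\|v\|^2]$ with $\|v\|^2=\sum_i v_i^2$; bounding each component of $v$ through $[\grad g]$ shows that this spectrum is contained in $[\lambdaaaT([\reducedGrad{\IndepOfVars_g^c}{g}])]$, precisely as in the definition~\eqref{eq:lambdaaaT}. Scaling by $[r'']$ then bounds the first summand. For the second summand I observe that $\reducedHessian{\IndepOfVars_g^c}{g}$ arises from $\reducedHessian{\LinVars_g^c}{g}$ by appending rows and columns that vanish identically (the indices on which $g$ depends at most linearly but not trivially), so its nonzero eigenvalues coincide with those of $\reducedHessian{\LinVars_g^c}{g}$ and the remaining ones are zero; its spectrum therefore lies in $[\min\{\lb{\lambda}_g^\dagger,0\},\max\{\ub{\lambda}_g^\dagger,0\}]$ (and in $[\lambda_g^\dagger]$ when $\IndepOfVars_g=\LinVars_g$), in complete analogy with Lem.~\ref{lem:fromSparseToFull}. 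Scaling by $[r']$ bounds the second summand, and the eigenvalue inequalities $\lambda_{\min}(A+B)\ge\lambda_{\min}(A)+\lambda_{\min}(B)$ and $\lambda_{\max}(A+B)\le\lambda_{\max}(A)+\lambda_{\max}(B)$ combine the two into an additive interval; taking the extrema over $x\in\hyperRec$ together with~\eqref{eq:boundsRandGradientG} and~\eqref{eq:gConditionSparseEig} yields the interval $[\lambda_f^\dagger]$ in Tab.~\ref{tab:CasesReducedBoundsComposition}.

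I expect the main obstacle to be the sign bookkeeping of the interval multiplications: since $[r']$ and $[r'']$ may each straddle zero, the products $[r']\,\reducedHessian{\indexSet}{g}$ and $[r'']\,[\lambdaaaT(\cdot)]$ require the full four-corner interval product, and one must check that the worst-case combination over $x$ of $r'(g(x))$, $r''(g(x))$ and the two spectra is still captured by the Weyl-based additive estimate rather than lost to spurious cancellation. The several rows of Tab.~\ref{tab:CasesReducedBoundsComposition} record exactly these distinct configurations (affine versus non-affine $r$, $|\IndepOfVars_g^c|=1$ versus $>1$ for the definition of $\lambdaaaT$, and whether $g$ is itself linear so that $\reducedHessian{\LinVars_g^c}{g}$ is empty); verifying each amounts to repeating the chain-rule identity above with the appropriate scaling and Weyl estimates.
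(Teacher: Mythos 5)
Your proposal is correct and follows essentially the same route as the paper's proof: the chain-rule decomposition $\reducedHessian{\LinVars_f^c}{f}=r''(g)\,\reducedGrad{\LinVars_f^c}{g}\,\grad^T_{\LinVars_f^c}g+r'(g)\,\reducedHessian{\LinVars_f^c}{g}$, the $\Lambda_s$ bound on the rank-one gradient outer product, the zero-padding argument relating $\reducedHessian{\LinVars_f^c}{g}$ to $\reducedHessian{\LinVars_g^c}{g}$ (whence the $[\min\{\lb{\lambda}_g^\dagger,0\},\max\{\ub{\lambda}_g^\dagger,0\}]$ term), and additive combination of the two spectral bounds with interval scaling by $[r']$ and $[r'']$. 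The only cosmetic difference is that the paper proves case 3 of Tab.~\ref{tab:CasesReducedBoundsComposition} in detail and declares the others analogous, whereas you organize the cases by whether $r$ is affine; both partitions cover the same three table rows.
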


\begin{table}[htp]
\caption{Rules for the computation of eigenvalue bounds $[\lambda_f^\dagger]$ for the reduced Hessian $\reducedHessian{\LinVars^c_f}{f(x)}$ of compositions $f(x)=r(g(x))$.}
\label{tab:CasesReducedBoundsComposition}
\centering
\small
\begin{tabular}{c|l|l}
case   &  $[\lambda_f^\dagger]$ & condition \\
\hline
\hline 
1 & $[r^{\prime\prime}]\,[\Lambda_s([\grad_{\LinVars_f^c} g])]$ & $\LinVars_g = \setN$  \\ 
2  & $[r^{\prime\prime}]\,[\Lambda_s([\grad_{\LinVars_f^c} g])]+[r^\prime]\,[\lambda_g^\dagger]$ & $\LinVars_g \subset \setN \wedge \LinVars_f =  \LinVars_g$ \\ 
3 & $[r^{\prime\prime}]\,[\Lambda_s([\grad_{\LinVars_f^c} g])] +[r^\prime]\,[\min \{ \lb{\lambda}_g^\dagger,0 \}, \max \{ \ub{\lambda}_g^\dagger,0 \}]$  & $\LinVars_g \subset \setN \wedge \LinVars_f \subset  \LinVars_g$ 
\end{tabular} 
\end{table}

\begin{proof}
We prove the last case in Tab.~\ref{tab:CasesReducedBoundsComposition}. The remaining cases can be proven analogously.
  The reduced Hessian of $f$ reads $\reducedHessian{\LinVars_f^c}{f(x)}=r^{\prime\prime}(g(x))\,\reducedGrad{\LinVars_f^c}{g}(x)\, \grad^T_{\LinVars_f^c}g(x)+r^\prime(g(x))\,\reducedHessian{\LinVars_f^c}{g(x)}$.
    Combining the two conditions of case 3 in Tab.~\ref{tab:CasesReducedBoundsComposition} yields $\LinVars_f \subset  \LinVars_g \subset \setN$, which implies $\LinVars_f^c \supset  \LinVars_g^c \supset \emptyset$. Thus, $m=|\LinVars_f^c|$ and $r=|\LinVars_g^c|$ satisfy $m>r>0$. We assume $\LinVars_f^c=\N_{1,m}$ and $\LinVars_g^c = \N_{1,r}$ without loss of generality.
Under these assumptions, we obtain
\begin{equation}
\label{eq:sparseHessianCompositionCase3}
\reducedHessian{\LinVars_f^c}{f(x)}  = r^{\prime\prime}(g(x))\,\reducedGrad{\LinVars_f^c}{g}(x)\, \grad^T_{\LinVars_f^c}g(x)+r^\prime(g(x))\, \left(\begin{array}{cc} 
\reducedHessian{\LinVars_g^c}{g(x)} & 0_{r,s}\\
0_{s,r} & 0_{s,s}
\end{array}\right),
\end{equation}
where $s=m-r>0$. Since $\grad g(x) \in [\grad g]$ for every $x \in \hyperRec$, we find
$$
\lb{\Lambda}_s([\grad_{\LinVars_f^c} g])\leq \lambda_{\min}(\reducedGrad{\LinVars_f^c}{g}(x)\, \grad^T_{\LinVars_f^c}g(x))\quad \textrm{and} \quad \lambda_{\max}(\reducedGrad{\LinVars_f^c}{g}(x)\, \grad^T_{\LinVars_f^c}g(x))  
\leq \ub{\Lambda}_s([\grad_{\LinVars_f^c} g]),
$$
for every $x \in \hyperRec$ according to \cite[Lem. 2.2]{Monnigmann2011a}.
Combining this intermediate result with the bounds on $r^{\prime}(g(x))$ and $r^{\prime\prime}(g(x))$ from Conds.~\ref{cond:gComposition} yields the eigenvalue bounds
$$
[\lambda_f^\dagger] = [r^{\prime\prime}]\,[\Lambda_s([\grad_{\LinVars_f^c} g])] +[r^\prime]\,[\min \{ \lb{\lambda}_g^\dagger,0 \}, \max \{ \ub{\lambda}_g^\dagger,0 \}]
$$
 on $\hyperRec$. \end{proof}

Lemma~\ref{lem:sparseTighterComposition} below shows that the bounds from Lem.~\ref{lem:sparseBoundsComposition} are as tight as or tighter than those from the original method proposed in~\cite{Monnigmann2011a}. Recall the bounds in~\cite{Monnigmann2011a} result in $[\lambda_f] =[r^{\prime\prime}]\,[\Lambda_s([\grad g])] +[r^\prime]\,[\lambda_g]$  for $f(x)=r(g(x))$ 
according to \cite[Prop 3.4]{Monnigmann2011a}.

\MmoSecondRound{Wieso ist das eigentlich hier und an den entsprechenden Stellen f\"ur Summe und Produkt ein Lemma? Sind die Aussagen in den Lemmas wirklich Vorbereitungen f\"ur andere Aussagen? Wenn nicht, m\"ussen wir wohl eine Prop.\ raus machen!
\textcolor{blue}{Ihr Einwand ist genau richtig. Jedoch wuerde ich ihn genau umgekehrt umsetzen. Natuerlich sind alle Aussagen aus den Kap. 4.1 bis 4.4 Vorbereitung auf das Kap. 4.5. Sie haben Recht, dass sich das auch in den Lem. / Prop. widerspiegeln sollte. Ich habe konsequenterweise alle Props. (in Kap. 4.2 bis 4.3) in Lems. umgewandelt. Dazu habe ich in Kap. 4.5 eine Genauigkeitsprop. eingebaut, die auf den Lems. aufbaut.
Nun hat man alle wesentlichen Aussagen in Kap. 4.5 kompakt zusammengefasst: Alg., Genauigkeit, Komplexitaet.}}
\begin{lemma}[improved bounds for compositions]
\label{lem:sparseTighterComposition}
Assume Conds.~\ref{cond:gComposition} hold and let $f$, $[\lambda_f^\dagger]$, and $\LinVars_f$ be as in Lem.~\ref{lem:sparseBoundsComposition}.
Let $[\lambda_f]$ and $[\lambda_g]$ be the eigenvalue bounds for the Hessians $\hessian{f}(x)$ and $\hessian{g}(x)$ on $\hyperRec$, calculated according to
Eq.~\eqref{eq:fromSparseToFull}. Then,
\begin{equation}
\nonumber
[\lambda_f] \subseteq [r^{\prime\prime}]\,[\Lambda_s([\grad g])] +[r^\prime]\,[\lambda_g].
\end{equation}
\end{lemma}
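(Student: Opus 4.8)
The plan is to follow the pattern of the proof of Lem.~\ref{lem:sparseTighterSum}: I would prove the inclusion for the last case in Tab.~\ref{tab:CasesReducedBoundsComposition} (i.e.\ $\LinVars_g \subset \setN$ together with $\LinVars_f \subset \LinVars_g$) and remark that cases~1 and~2 follow analogously. Throughout I abbreviate the right-hand side by $R := [r^{\prime\prime}]\,[\Lambda_s([\grad g])] + [r^\prime]\,[\lambda_g]$.

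First I would unfold both sides for case~3. As noted in the proof of Lem.~\ref{lem:sparseBoundsComposition}, the two conditions of case~3 force $\emptyset \subset \LinVars_f \subset \LinVars_g \subset \setN$, so $\LinVars_g$ is neither empty nor equal to $\setN$; hence the third branch of~\eqref{eq:fromSparseToFull} gives $[\lambda_g] = [\min\{\lb{\lambda}_g^\dagger,0\},\max\{\ub{\lambda}_g^\dagger,0\}]$. Consequently the summand $[r^\prime]\,[\lambda_g]$ of $R$ is \emph{identical} to the second summand of $[\lambda_f^\dagger]$ from Tab.~\ref{tab:CasesReducedBoundsComposition}, and $[\lambda_f^\dagger]$ and $R$ differ only in their first summands $[r^{\prime\prime}]\,[\Lambda_s([\grad_{\LinVars_f^c} g])]$ versus $[r^{\prime\prime}]\,[\Lambda_s([\grad g])]$.

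The key step is the inclusion $[\Lambda_s([\grad_{\LinVars_f^c} g])] \subseteq [\Lambda_s([\grad g])]$. In case~3 one has $|\LinVars_f^c| \geq 2$ and hence $n = |\setN| \geq |\LinVars_f^c| \geq 2$, so both operators fall into the second branch of~\eqref{eq:lambdaaaT} and are intervals of the shape $[0,\,\cdot\,]$. The reduced operator sums the nonnegative terms $\max\{\lb{\grad g}_i^2,\ub{\grad g}_i^2\}$ over $i\in\LinVars_f^c$, while the full one sums them over all of $\setN\supseteq\LinVars_f^c$; dropping nonnegative terms can only shrink the upper bound, which gives the inclusion. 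By inclusion isotonicity of interval multiplication and addition (Tab.~\ref{tab:intervalArithmetic}) it follows that $[\lambda_f^\dagger] \subseteq R$. I would also record that $0 \in R$: the interval $[\Lambda_s([\grad g])]$ has lower bound $0$ and $[\lambda_g]$ satisfies $\lb{\lambda}_g \leq 0 \leq \ub{\lambda}_g$, so both images $[r^{\prime\prime}]\,[\Lambda_s([\grad g])]$ and $[r^\prime]\,[\lambda_g]$ contain $0$, and hence so does their sum $R$.

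Finally I would lift $[\lambda_f^\dagger]$ to $[\lambda_f]$ exactly as in Lem.~\ref{lem:sparseTighterSum}. Since $\LinVars_f \subset \setN$, the second branch of~\eqref{eq:fromSparseToFull} is excluded, and whether $\LinVars_f=\emptyset$ or $\emptyset \subset \LinVars_f \subset \setN$, the elementary inclusion $[a]\subseteq[\min\{\lb{a},0\},\max\{\ub{a},0\}]$ yields $[\lambda_f]\subseteq[\min\{\lb{\lambda}_f^\dagger,0\},\max\{\ub{\lambda}_f^\dagger,0\}]$ in both subcases. Combining $[\lambda_f^\dagger]\subseteq R$ with $0\in R$ gives $\min\{\lb{\lambda}_f^\dagger,0\}\geq\lb{R}$ and $\max\{\ub{\lambda}_f^\dagger,0\}\leq\ub{R}$, whence $[\lambda_f]\subseteq R$. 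I expect the only delicate bookkeeping to be the treatment of $\Lambda_s$: one must verify that both $\Lambda_s$-operators genuinely land in the $m\neq 1$ branch (so that the $[0,\,\cdot\,]$ shape and the term-dropping argument apply) and that the $[r^\prime]\,[\lambda_g]$ summands cancel exactly; the remainder is monotonicity of interval arithmetic.
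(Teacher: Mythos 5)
Your proposal is correct and follows exactly the route the paper intends: it omits this proof as ``very similar to the proof of Lem.~\ref{lem:sparseTighterSum}'', and you reproduce that template (unfold both sides via Eq.~\eqref{eq:fromSparseToFull}, use $[a]\subseteq[\min\{\lb{a},0\},\max\{\ub{a},0\}]$, and observe $0\in R$), supplying the one composition-specific ingredient, namely $[\Lambda_s([\grad_{\LinVars_f^c} g])]\subseteq[\Lambda_s([\grad g])]$ by dropping nonnegative summands. The only blemish is the claim that case~3 forces $\emptyset\subset\LinVars_f$ (it only forces $\emptyset\subset\LinVars_g\subset\setN$ and $|\LinVars_f^c|\ge 2$; $\LinVars_f=\emptyset$ can occur, e.g.\ for $g(x)=x_1^2+x_2$ with non-affine $r$), but this is harmless since your final lifting step treats $\LinVars_f=\emptyset$ and $\emptyset\subset\LinVars_f\subset\setN$ separately anyway.
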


\MmoSecondRound{Wenn wir in Abschnitt~\ref{subsec:sparseBoundsComposition} keine der Props., Lemmas etc. beweisen, stellt sich die Frage, ob es nicht eine viel kompaktere Darstellung gibt, als der Summe, Komposition und dem Produkt jeweils einen eigenen Abschnitt zu widmen. Entweder wir m\"ussen drastisch k\"urzen oder hier mindestens eine Beweisskizze hinschreiben, damit kein Reviewer die K\"urzung mit der Begr\"undung \textit{Bewiesen wird eh nichts mehr.} fordern kann. Der Text der hier steht ist noch zu unverst\"andlich f\"ur eine Beweisskizze. Die Idee zu argumentieren, dass der Beweis wie f\"ur die Summe funktioniert, hier aber andere F\"alle unterschieden m\"ussen, ist aber perfekt f\"ur eine Beweisskizze, die sich andauernd auf die Analogie zur Summe berufen kann.
\textcolor{blue}{Es finden sich doch in Kap. 4.3 und 4.4 einige Beweise. Ich wuerde nicht drastisch kuerzen, da die Struktur dem Leser meines Erachtens hilft. Ich wuerde aber evt. frueh einen Hinweis bringen, dass alles wichtige in Kap. 4.5 steht und alles andere nur darauf vorbereitet.}}
\MmoThirdRound{Noch Vorschlag von msd folgen: \textit{Ich wuerde aber evt. frueh einen Hinweis bringen, dass alles wichtige in Kap. 4.5 steht und alles andere nur darauf vorbereitet.}}

Since the proof is very similar to the proof of Lem.~\ref{lem:sparseTighterSum}, we omit it.

\subsection{Improved eigenvalue bounds for the product of two functions}
\label{subsec:sparseBoundsProduct}

We begin by collecting recurring conditions again.
\begin{conditions}
\label{cond:ghProduct} 
Assume Conds.~\ref{cond:ghSum} hold and assume there exist intervals $[g]\subset \R$ and $[h]\subset \R$ and hyperrectangles $[\grad g] \subset \R^n$ and $[\grad h] \subset \R^n$ such that
$$
\lb{g} \leq g(x) \leq \ub{g},\,\,\,\,\, 
\lb{h} \leq h(x) \leq \ub{h},\,\,\,\,\, 
\lb{\grad g}_i \leq (\grad g(x))_i \leq \ub{\grad g}_i, 
\,\,\,\,  \textrm{and} \,\,\,\,  \lb{\grad h}_i \leq (\grad h(x))_i \leq \ub{\grad h}_i
$$
for every $x\in \hyperRec$ and every $i \in \setN$.
\end{conditions}

The following lemma provides rules for the identification of at most linear dependencies and independences of products.
\begin{lemma}[index sets for products]
\label{lem:fAtMostLinearProduct}
Assume Conds.~\ref{cond:ghProduct} hold and consider the function $f:\openDomain \rightarrow \R$, $f(x)=g(x)\,h(x)$.
Let $\IndepOfVars_f = \IndepOfVars_g \cap \IndepOfVars_h $ and $\LinVars_f = \IndepOfVars_g \cap \IndepOfVars_h $.
Then, $f$ depends at most linearly on $x_i$ for all $i \in \LinVars_f$ and $f$ is independent of $x_i$ for all $i \in \IndepOfVars_f$.
\end{lemma}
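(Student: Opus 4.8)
The plan is to reduce both assertions to a single application of the product rule, exploiting the fact that here $\LinVars_f$ and $\IndepOfVars_f$ are the \emph{same} index set. First I would recall the two relevant definitions from Sect.~\ref{subsec:sparsityHandling}: $f$ is independent of $x_i$ precisely when $\partial f/\partial x_i(x)=0$ for all $x$, while $f$ depends at most linearly on $x_i$ precisely when $\partial f/\partial x_i(x)$ equals some constant $c\in\R$ for all $x$. Independence is thus the special case $c=0$, so independence implies at most linear dependence. Since the lemma sets $\LinVars_f=\IndepOfVars_f=\IndepOfVars_g\cap\IndepOfVars_h$, it suffices to prove the single stronger statement that $f$ is independent of $x_i$ for every $i\in\IndepOfVars_g\cap\IndepOfVars_h$; the at-most-linear claim for $\LinVars_f$ then follows immediately with $c=0$.

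Next, I would fix an arbitrary $i\in\IndepOfVars_g\cap\IndepOfVars_h$ and differentiate $f(x)=g(x)\,h(x)$ using the product rule, obtaining $\partial f/\partial x_i(x)=(\partial g/\partial x_i(x))\,h(x)+g(x)\,(\partial h/\partial x_i(x))$, which is valid since $g$ and $h$ are (twice) continuously differentiable by Conds.~\ref{cond:ghProduct}. Because $i\in\IndepOfVars_g$, the first partial derivative vanishes identically on $\openDomain$, and because $i\in\IndepOfVars_h$, the second partial derivative vanishes identically as well. Hence $\partial f/\partial x_i(x)=0$ for all $x\in\openDomain$, which is exactly independence of $f$ on $x_i$. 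This establishes both claims at once.

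I do not expect a genuine technical obstacle, as the computation is a one-line differentiation; the content of the lemma lies in the \emph{choice} of the index sets rather than in the estimate. The only point worth a comment is conceptual: one might hope to enlarge $\LinVars_f$ to $\LinVars_g\cap\LinVars_h$, mirroring the sum case in Lem.~\ref{lem:fAtMostLinearSum}. This fails for products, because if $\partial g/\partial x_i\equiv c_g$ and $\partial h/\partial x_i\equiv c_h$ are merely constant (not necessarily zero), the product rule yields $\partial f/\partial x_i(x)=c_g\,h(x)+c_h\,g(x)$, which need not be constant in $x_i$ (take $g=h=x_i$, so that $f=x_i^2$ and $\partial f/\partial x_i=2x_i$). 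This is exactly why the more conservative choice $\LinVars_f=\IndepOfVars_g\cap\IndepOfVars_h$ is required, and it is precisely this intersection of \emph{independence} sets that forces both summands in the product rule to vanish, closing the argument.
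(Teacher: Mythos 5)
Your proof is correct. The paper omits the proof of this lemma entirely (just as it omits the ``trivial'' proofs of the analogous index-set lemmas for sums and compositions), and your argument --- observe that $\LinVars_f=\IndepOfVars_f=\IndepOfVars_g\cap\IndepOfVars_h$, so it suffices to establish independence, which follows from the product rule since both $\partial g/\partial x_i$ and $\partial h/\partial x_i$ vanish identically for $i\in\IndepOfVars_g\cap\IndepOfVars_h$ --- is exactly the intended one; your closing remark with the counterexample $g=h=x_i$ also correctly explains why $\LinVars_f$ cannot be enlarged to $\LinVars_g\cap\LinVars_h$ as in Lem.~\ref{lem:fAtMostLinearSum}.
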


Based on  Conds.~\ref{cond:ghProduct}  and Lem.~\ref{lem:fAtMostLinearProduct}, we are able to compute bounds on the spectrum of $\reducedHessian{\LinVars^c_f}{f(x)}$ according to the rules summarized in Lem.~\ref{lem:sparseBoundsProduct} and Tab.~\ref{tab:CasesReducedBoundsProduct}.  As a preparation, we introduce the interval operators
\begin{align}
  \label{eq:LambdaR}   
[\Lambda_r([a],[b])] &:= [\min \{\lb{a},\lb{b}\}, \max \{\ub{a},\ub{b}\} ] \qquad \qquad \qquad \textrm{and} \\
\label{eq:intervalOperatorLambdaStar}
[\Lambda_\star([a],[b],[c])]&:=\frac{1}{2}\,\left[\lb{a}+\lb{b}-\sqrt{(\lb{a}-\lb{b})^2+d},\ub{a}+\ub{b}+\sqrt{(\ub{a}-\ub{b})^2+d}\right]
\end{align}
for real intervals $[a],[b],[c]\subset \R$, where $d = 4\,\max \{\lb{c}^2,\ub{c}^2\}$. Definition~\eqref{eq:LambdaR} is only introduced 
for the sake of a compact notation. Whenever it is more instructive, we use the notation on the r.h.s.\ of \eqref{eq:LambdaR}. 
\begin{lemma}[spectral bounds for reduced Hessian of products]
\label{lem:sparseBoundsProduct}
Assume Conds.~\ref{cond:ghProduct}  hold and consider the function $f:\openDomain \rightarrow \R$ with $f(x)=g(x)\,h(x)$.
Let $\LinVars_f$ be defined as in Lem.~\ref{lem:fAtMostLinearProduct}. Then the bounds~\eqref{eq:sparseBoundsF} hold for 
$[\lambda_f^\dagger]$ computed according to the rules listed in Tab.~\ref{tab:CasesReducedBoundsProduct}.
\end{lemma}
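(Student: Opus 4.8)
The plan is to mimic the block-decomposition arguments used for Lem.~\ref{lem:sparseBoundsSum} and Lem.~\ref{lem:sparseBoundsComposition}, but accounting for the richer term structure created by the product rule. I would first record the identity
\begin{equation}
\nonumber
\reducedHessian{\LinVars_f^c}{f(x)} = h(x)\,\reducedHessian{\LinVars_f^c}{g(x)} + g(x)\,\reducedHessian{\LinVars_f^c}{h(x)} + \reducedGrad{\LinVars_f^c}{g}(x)\,\reducedGrad{\LinVars_f^c}{h}(x)^T + \reducedGrad{\LinVars_f^c}{h}(x)\,\reducedGrad{\LinVars_f^c}{g}(x)^T,
\end{equation}
which follows from differentiating $f=g\,h$ twice. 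By Lem.~\ref{lem:fAtMostLinearProduct} the nontrivial index set is $\LinVars_f^c = \IndepOfVars_g^c \cup \IndepOfVars_h^c$, so I would next pin down the support of each of the four terms: the two curvature terms are supported on $\LinVars_g^c$ and $\LinVars_h^c$, while the two rank-coupling terms are supported on $\IndepOfVars_g^c \times \IndepOfVars_h^c$ and its transpose. The nesting $\IndepOfVars_g \subseteq \LinVars_g$ (hence $\LinVars_g^c \subseteq \IndepOfVars_g^c$), and likewise for $h$, is what lets these supports be organized into a small number of disjoint blocks.

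The case distinction of Tab.~\ref{tab:CasesReducedBoundsProduct} is then dictated by the inclusion relations among the four sets $\LinVars_g^c$, $\LinVars_h^c$, $\IndepOfVars_g^c$, $\IndepOfVars_h^c$ (each empty or nonempty, nested, equal, disjoint, or partially overlapping). For every case I would, as in the proof of Lem.~\ref{lem:sparseBoundsSum}, relabel the variables without loss of generality so that $\reducedHessian{\LinVars_f^c}{f(x)}$ acquires an explicit block form, and then bound its extreme eigenvalues block by block for fixed $x \in \hyperRec$ before taking $\min_x \lambda_{\min}$ and $\max_x \lambda_{\max}$. The individual ingredients are already available: the scalar-weighted curvature terms are bounded by $[h]\,[\lambda_g^\dagger]$ and $[g]\,[\lambda_h^\dagger]$ via Conds.~\ref{cond:ghProduct}; a coupling block whose support coincides with a curvature block is absorbed by the operator $\Lambda_t$ of~\eqref{eq:Lambda_abbaNotation} using \cite[Lem. 2.3]{Monnigmann2011a}; and a purely block-diagonal arrangement is handled by the interval hull $\Lambda_r$ of~\eqref{eq:LambdaR}, exactly as in cases 4--8 of Tab.~\ref{tab:CasesReducedBoundsSum}.

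The genuinely new ingredient, and the main obstacle, is the operator $\Lambda_\star$ of~\eqref{eq:intervalOperatorLambdaStar}. It is needed precisely when $g$ contributes curvature on one variable block and $h$ on a disjoint block, while the cross term $\reducedGrad{\LinVars_f^c}{g}\,\reducedGrad{\LinVars_f^c}{h}^T + \reducedGrad{\LinVars_f^c}{h}\,\reducedGrad{\LinVars_f^c}{g}^T$ couples the two blocks. After relabeling, the reduced Hessian then has the $2$-block shape
$$
\left(\begin{array}{cc} P & C \\ C^T & Q \end{array}\right),
$$
where $P$ and $Q$ carry the scalar-bounded curvature contributions and $C$ the coupling. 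I expect to bound the extreme eigenvalues of this matrix by the closed form in~\eqref{eq:intervalOperatorLambdaStar}, which is the matrix analogue of the $2\times2$ eigenvalue formula $\tfrac{1}{2}(a+b)\pm\tfrac{1}{2}\sqrt{(a-b)^2+4c^2}$, with $a,b$ ranging over the diagonal-block eigenvalue bounds and $4c^2$ replaced by the quantity $d = 4\,\max\{\lb{c}^2,\ub{c}^2\}$ controlling the coupling.

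Establishing this estimate rigorously for general (not merely $2\times2$) diagonal blocks is where the real work lies. I would isolate it as an auxiliary eigenvalue lemma, proved by a Courant--Fischer min-max argument that reduces the extreme eigenvalues of the symmetric $2$-block matrix to the corresponding extreme values of the scalarized $2\times2$ matrix built from the block eigenvalue bounds and the spectral bound of $C$, and then invoke it in each of the coupled cases. With that lemma in hand, the conclusion follows by translating every matrix eigenvalue bound into an interval bound through Conds.~\ref{cond:ghProduct} and the interval arithmetic of Tab.~\ref{tab:intervalArithmetic}, yielding the entry $[\lambda_f^\dagger]$ prescribed in each row of Tab.~\ref{tab:CasesReducedBoundsProduct}.
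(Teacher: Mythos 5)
Your proposal is correct and its skeleton coincides with the paper's: write out the product-rule identity for $\reducedHessian{\LinVars_f^c}{f(x)}$ (your version, with all four terms, is the intended one; the paper's Eq.~\eqref{eq:sparseSumHelper1} drops $g(x)\,\reducedHessian{\LinVars_f^c}{h(x)}$ by an apparent typo), track supports via $\LinVars_g^c\subseteq\IndepOfVars_g^c$ and $\LinVars_f^c=\IndepOfVars_g^c\cup\IndepOfVars_h^c$, and settle the non-$\Lambda_\star$ rows of Tab.~\ref{tab:CasesReducedBoundsProduct} by the same block-diagonal arguments as for sums and compositions, with $\Lambda_t$ absorbing the cross term and $\Lambda_r$ handling disjoint diagonal blocks. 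Where you genuinely diverge is the $\Lambda_\star$ step. You plan a general auxiliary lemma for a symmetric two-block matrix $\left(\begin{smallmatrix}P & C\\ C^T & Q\end{smallmatrix}\right)$ proved by Courant--Fischer; that lemma is true (the Rayleigh quotient $x^TPx+y^TQy+2x^TCy$ is dominated by the quadratic form of the scalarized $2\times 2$ matrix in $(\|x\|,\|y\|)$, and the resulting expression is monotone in the block bounds), so your route would go through. The paper instead observes that condition $C_\star$ forces $|\IndepOfVars_g^c|=|\IndepOfVars_h^c|=1$ with $\IndepOfVars_g^c\cap\IndepOfVars_h^c=\emptyset$, so in every row where $\Lambda_\star$ appears $g$ and $h$ each depend on a single, distinct variable, $\LinVars_f^c=\{j_1,j_2\}$, and the reduced Hessian is literally a $2\times 2$ matrix with scalar entries; its extreme eigenvalues over the resulting interval matrix set are then obtained exactly from the closed-form quadratic formula by a concavity/vertex argument (Lem.~\ref{lem:specialMatrix}). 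Hence the ``real work'' you anticipate for general diagonal blocks never arises -- this is also visible in the table, where the third argument $[\grad_{\IndepOfVars_g^c} g][\grad_{\IndepOfVars_h^c} h]$ of $\Lambda_\star$ is a product of scalar intervals. Noticing the scalar reduction buys a shorter proof and bounds that are tight for the $2\times 2$ interval matrix set, whereas your block lemma yields only an outer estimate (which would still suffice for the statement of the lemma).
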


\begin{table}[htp]
\caption{Rules for the computation of eigenvalue bounds $[\lambda_f^\dagger]$ for the reduced Hessian $\reducedHessian{\LinVars^c_f}{f(x)}$ of a product $f(x)=g(x)\,h(x)$. 
The expressions $[\lambda_t]$, $[\lambda_{g,0}]$, $[\lambda_{h,0}]$, $\LinVars_\cup$, and $\LinVars_\cap$ are short for $[\lambda_t]=[\Lambda_t([\grad_{\LinVars_f^c} g],[\grad_{\LinVars_f^c} h])]$, $[\lambda_{g,0}]=[\min \{\lb{\lambda}_g^\dagger,0\}, \max \{\ub{\lambda}_g^\dagger,0\} ]$, $[\lambda_{h,0}]=[\min \{\lb{\lambda}_h^\dagger,0\}, \max \{\ub{\lambda}_h^\dagger,0\} ]$, $\LinVars_\cup = \LinVars_g \cup \LinVars_h$ and $\LinVars_\cap = \LinVars_g \cap \LinVars_h$. Condition $C_\star$ reads  $(\IndepOfVars_g \cup \IndepOfVars_h = \setN) \wedge (|\IndepOfVars_g|=n-1) \wedge (|\IndepOfVars_h|=n-1)$.}
\label{tab:CasesReducedBoundsProduct}
\centering
\small
\begin{tabular}{c@{\,\,\,}ll}
case    & $[\lambda_f^\dagger]$ & condition \\
\hline
\hline 
1 & $[\lambda_t]$ & $\LinVars_g = \setN  \wedge \LinVars_h= \setN$  \\
2 &  $[\lambda_t]+[h]\,[\lambda_g^\dagger]$ & $\LinVars_g \subset \setN  \wedge \LinVars_h= \setN\wedge \LinVars_f = \LinVars_g $ \\ 
3  &  $[\lambda_t]+[h]\,[\lambda_{g,0}]$ & $\LinVars_g \subset \setN  \wedge \LinVars_h= \setN \wedge \LinVars_f \subset \LinVars_g \wedge \neg C_\star $ \\ 
4  &  $[\Lambda_\star([h]\,[\lambda_g^\dagger],[0,0],[\grad_{\IndepOfVars_g^c} g][\grad_{\IndepOfVars_h^c} h])]$ & $\LinVars_g \subset \setN  \wedge \LinVars_h= \setN  \wedge  C_\star $ \\ 
5&  $[\lambda_t]+[g]\,[\lambda_h^\dagger]$ & $\LinVars_g = \setN  \wedge \LinVars_h \subset  \setN  \wedge \LinVars_f = \LinVars_h  $  \\ 
6  &  $[\lambda_t]+[g]\,[\lambda_{h,0}]$ & $\LinVars_g = \setN  \wedge \LinVars_h \subset  \setN \wedge \LinVars_f \subset \LinVars_h \wedge \neg C_\star $ \\ 
7  &  $[\Lambda_\star([0,0],[g]\,[\lambda_h^\dagger],[\grad_{\IndepOfVars_g^c} g][\grad_{\IndepOfVars_h^c} h])]$ & $\LinVars_g = \setN  \wedge \LinVars_h \subset  \setN  \wedge C_\star  $ \\ 
8 &  $[\lambda_t]+[\Lambda_{r}([\Lambda_{r}([h]\,[\lambda_g^\dagger],[g]\,[\lambda_h^\dagger])],[0,0])]$ & $\LinVars_g \subset \setN  \wedge \LinVars_h \subset \setN \wedge \LinVars_\cup = \setN \wedge \LinVars_f \subset \LinVars_\cap $ \\ 
9  &  $[\lambda_t]+[\Lambda_{r}([h]\,[\lambda_g^\dagger],[g]\,[\lambda_h^\dagger])]$  & $\LinVars_g \subset \setN  \wedge \LinVars_h \subset \setN \wedge \LinVars_\cup = \setN \wedge \LinVars_f = \LinVars_\cap \wedge \neg C_\star$ \\
10  &  $[\Lambda_\star([h]\,[\lambda_g^\dagger],[g]\,[\lambda_h^\dagger],[\grad_{\IndepOfVars_g^c} g][\grad_{\IndepOfVars_h^c} h])]$  & $\LinVars_g \subset \setN  \wedge \LinVars_h \subset \setN \wedge   C_\star$ \\  
11   & $[\lambda_t]+[h]\,[\lambda_g^\dagger] +[g]\,[\lambda_h^\dagger]$ & $ \LinVars_\cup \subset \setN   \wedge \LinVars_f = \LinVars_g = \LinVars_h $  \\ 
12   & $[\lambda_t]+[h]\,[\lambda_g^\dagger] +[g]\,[\lambda_{h,0}]$ & $ \LinVars_\cup \subset \setN   \wedge \LinVars_f = \LinVars_g \subset \LinVars_h $ \\ 
13  & $[\lambda_t]+[h]\,[\lambda_{g,0}] +[g]\,[\lambda_h^\dagger]$ & $ \LinVars_\cup \subset \setN  \wedge \LinVars_f = \LinVars_h \subset \LinVars_g $ \\ 
14  & $[\lambda_t]+[\Lambda_r([h]\,[\lambda_g^\dagger] +[g]\,[\lambda_h^\dagger],[0,0])]$ & $ \LinVars_\cup \subset \setN  \wedge \LinVars_f \subset \LinVars_g = \LinVars_h $ \\
15 & $[\lambda_t]+[\Lambda_r([h]\,[\lambda_g^\dagger] +[g]\,[\lambda_{h,0}],[0,0])]$ & $ \LinVars_\cup \subset \setN   \wedge \LinVars_f \subset \LinVars_g \subset \LinVars_h $    \\ 
16   & $[\lambda_t]+[\Lambda_r([h]\,[\lambda_{g,0}] +[g]\,[\lambda_h^\dagger],[0,0])]$ & $ \LinVars_\cup \subset \setN  \wedge \LinVars_f \subset \LinVars_h \subset \LinVars_g $ \\ 
17  & $[\lambda_t]+[h]\,[\lambda_{g,0}] +[g]\,[\lambda_{h,0}]$ & $\LinVars_\cup \subset \setN \wedge  \LinVars_g \nsubseteq \LinVars_h \wedge \LinVars_h \nsubseteq \LinVars_g  $ 
\end{tabular} 
\end{table}

\begin{proof} 
We prove case 10 from Tab.~\ref{tab:CasesReducedBoundsProduct}. Cases 4 and 7 can be shown analogously. The remaining cases can be proven in the same fashion as those treated in the proofs of Lems.~\ref{lem:sparseBoundsSum} and~\ref{lem:sparseBoundsComposition}.
The reduced Hessian of $f$, which reads
\begin{equation}
\label{eq:sparseSumHelper1}
  \reducedHessian{\LinVars_f^c}{f(x)} = \reducedGrad{\LinVars_f^c}{g}(x) \grad^T_{\LinVars_f^c} h(x) + \reducedGrad{\LinVars_f^c}{h}(x) \grad^T_{\LinVars_f^c} g(x) + h(x)\,\reducedHessian{\LinVars_f^c}{g(x)}  \end{equation}
in all cases, is a two-by-two matrix with a particularly simple block structure in case~10. 
To see this, first note that $g$ and $h$ are independent of all but one variable each (the conditions $|\IndepOfVars_g|=n-1$ and $|\IndepOfVars_h|=n-1$ imply $|\IndepOfVars_g^c|=1$ and $|\IndepOfVars_h^c|=1$).
Moreover, $\IndepOfVars_g \cup \IndepOfVars_h = \setN$ implies $\IndepOfVars_g^c \cap \IndepOfVars_h^c = \emptyset$, which implies $g$ and $h$ depend on two different variables. 
Without loss of generality we assume $g$ depends on $x_1$, and $h$ depends on $x_2$, i.e., 
$\IndepOfVars_g^c=\{1\}$ and $\IndepOfVars_h^c = \{2\}$.
As a further preparation note that $\LinVars_f= \IndepOfVars_g \cap \IndepOfVars_h$, which holds according to Lem.~\ref{lem:fAtMostLinearProduct}, implies $\LinVars_f^c= \IndepOfVars_g^c \cup \IndepOfVars_h^c$, which evaluates to $\LinVars_f^c = \{1,2\}$. 
Since $\LinVars_f^c= \{1, 2\}$ and $g$ only depends on $x_1$ (resp.\ $h$ only depends on $x_2$), 
we have 
\begin{equation}
\label{eq:sparseSumHelper2}
\reducedGrad{\LinVars_f^c}{g}(x) =  \begin{pmatrix}
  \frac{\partial}{\partial x_1} g(x) \\
  \frac{\partial}{\partial x_2} g(x)
\end{pmatrix} = \begin{pmatrix}
  \frac{\partial}{\partial x_1} g(x)  \\
  0
\end{pmatrix},
\quad
\reducedHessian{\LinVars_f^c}{g(x)} =  \begin{pmatrix}
  \frac{\partial^2}{\partial x_1^2} g(x) & 0 \\
  0 & 0
\end{pmatrix}
\end{equation}
respectively
\begin{equation}
\label{eq:sparseSumHelper3}
\reducedGrad{\LinVars_f^c}{h}(x) =  \begin{pmatrix}
  \frac{\partial}{\partial x_1} h(x) \\
  \frac{\partial}{\partial x_2} h(x)
\end{pmatrix} = \begin{pmatrix}
  0 \\
  \frac{\partial}{\partial x_2} h(x)  
\end{pmatrix},
\quad
\reducedHessian{\LinVars_f^c}{h(x)} =  \begin{pmatrix}
  0 & 0 \\
  0 & \frac{\partial^2}{\partial x_2^2} h(x)
\end{pmatrix}.
\end{equation}
Substituting \eqref{eq:sparseSumHelper2} and \eqref{eq:sparseSumHelper3} into \eqref{eq:sparseSumHelper1} yields
$$
  \reducedHessian{\LinVars_f^c}{f(x)} =  \begin{pmatrix}
    h(x)\,\frac{\partial^2}{\partial x_1^2}{g(x)} & \frac{\partial}{\partial x_1} g(x)\frac{\partial}{\partial x_2} h(x) \\
    \frac{\partial}{\partial x_1} g(x)\frac{\partial}{\partial x_2} h(x) & g(x)\,\frac{\partial^2}{\partial x_2^2} h(x)
  \end{pmatrix},
$$
where all entries are scalars.
Now, consider the matrix set
$$\mathcal{H} = \{ H \in \R^{2 \times 2} \,|\, H_{11} \in [h]\,[\lambda_g^\dagger], \,H_{22} \in [g]\,[\lambda_h^\dagger],\, H_{12} \in [\grad_{\IndepOfVars_g^c} g][\grad_{\IndepOfVars_h^c} h],\, H=H^T \}$$
and observe $\{ \reducedHessian{\LinVars_f^c}{f(x)}  \in \R^{2 \times 2} \,|\, x \in \hyperRec \} \subseteq  \mathcal{H}$.
To see this, note that $\{ \reducedHessian{\LinVars_g^c}{g(x)}  \in \R  \,|\, x \in \hyperRec \} \subseteq  [\lambda_g^\dagger]$ and 
$\{ \reducedHessian{\LinVars_h^c}{h(x)}  \in \R  \,|\, x \in \hyperRec \} \subseteq  [\lambda_h^\dagger]$, 
since the eigenvalue of a matrix $M\in\R^{1\times 1}$ is $\lambda= M_{1, 1}$. 
According to Lem.~\ref{lem:specialMatrix} stated in the appendix, eigenvalue bounds for the matrix set $\mathcal{H}$ and consequently for $\reducedHessian{\LinVars_f^c}{f(x)}$ on $\hyperRec$ read
$[\Lambda_\star([h]\,[\lambda_g^\dagger],[g]\,[\lambda_h^\dagger],[\grad_{\IndepOfVars_g^c} g][\grad_{\IndepOfVars_h^c} h])]$ as claimed in Tab.~\ref{tab:CasesReducedBoundsProduct}.
\end{proof}

\MmoThirdRound{Immer noch viel Wiederholung in den Teilen zur Summe, zum Komp.\ und zum Produkt bis hin zu einzelnen \"Uberleitungen:}

Lemma~\ref{lem:sparseTighterProduct} shows that the bounds from Lem.~\ref{lem:sparseBoundsProduct} are as tight as or tighter than those from the original method proposed in~\cite{Monnigmann2011a}. Recall the bounds in~\cite{Monnigmann2011a} result in $[\lambda_f] =[\Lambda_t([\grad g],[\grad h])]+[h]\,[\lambda_g] +[g]\,[\lambda_h]$  for $f(x)=g(x)\,h(x)$ according to~\cite[Prop 3.2.(iv)]{Monnigmann2011a}.
We omit the proof of Lem.~\ref{lem:sparseTighterProduct}, since it is similar to its counterparts in Sect.~\ref{subsec:sparseBoundsSum}.

\begin{lemma}[improved bounds for products]
\label{lem:sparseTighterProduct}
Assume Conds.~\ref{cond:gComposition} hold and let $f$, $[\lambda_f^\dagger]$, and $\LinVars_f$ be as in Lem.~\ref{lem:sparseBoundsComposition}.
Let $[\lambda_f]$, $[\lambda_g]$, and $[\lambda_h]$ be the eigenvalue bounds for the Hessians $\hessian{f}(x)$, $\hessian{g}(x)$, and $\hessian{h}(x)$ on $\hyperRec$, calculated according to
Eq.~\eqref{eq:fromSparseToFull}. Then,
$$
[\lambda_f] \subseteq [\Lambda_t([\grad g],[\grad h])]+[h]\,[\lambda_g] +[g]\,[\lambda_h].
$$
\end{lemma}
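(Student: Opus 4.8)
The plan is to mirror the proof of Lem.~\ref{lem:sparseTighterSum} almost verbatim: I would establish the inclusion for one representative case of Tab.~\ref{tab:CasesReducedBoundsProduct} in full and remark that the remaining cases follow by the same bookkeeping. Here $f$, $[\lambda_f^\dagger]$ and $\LinVars_f$ are as in Lem.~\ref{lem:sparseBoundsProduct}, and Conds.~\ref{cond:ghProduct} are in force (the reference to the composition data in the statement should be read as the corresponding product data). Two elementary facts make all interval comparisons routine and would be used throughout: interval addition and multiplication are inclusion monotone, i.e.\ $[a]\subseteq[a']$ and $[b]\subseteq[b']$ imply $[a]+[b]\subseteq[a']+[b']$ and $[a]\,[b]\subseteq[a']\,[b']$; and the padding underlying Eq.~\eqref{eq:fromSparseToFull} only enlarges an interval toward $0$, so that $[\lambda_g^\dagger]\subseteq[\lambda_g]$ and, when $\emptyset\subset\LinVars_g\subset\setN$, even $[\lambda_g]=[\min\{\lb{\lambda}_g^\dagger,0\},\max\{\ub{\lambda}_g^\dagger,0\}]$ (likewise for $h$).

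First I would dispose of the $[\lambdaabba(\cdot,\cdot)]$ contribution, which sits on both sides. Because $\LinVars_f=\IndepOfVars_g\cap\IndepOfVars_h$ by Lem.~\ref{lem:fAtMostLinearProduct}, every index $i\in\LinVars_f$ satisfies $(\grad g(x))_i=(\grad h(x))_i=0$ for all $x$, so the components dropped in passing from $[\grad g],[\grad h]$ to $[\grad_{\LinVars_f^c} g],[\grad_{\LinVars_f^c} h]$ are jointly zero. A short monotonicity argument for $[\lambdaabba(\cdot,\cdot)]$ under removal of jointly-zero components (the same mechanism behind \cite[Lem. 2.3]{Monnigmann2011a} together with the branch $m=1$ of Eq.~\eqref{eq:Lambda_abbaNotation}) then yields $[\lambda_t]=[\lambdaabba([\grad_{\LinVars_f^c} g],[\grad_{\LinVars_f^c} h])]\subseteq[\lambdaabba([\grad g],[\grad h])]$, with a genuine tightening precisely when the reduction lowers the dimension to $m=1$. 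This reduces the claim to comparing the remaining second-order contributions.

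Next I would treat a representative additive case, e.g.\ case~17, where $[\lambda_f^\dagger]=[\lambda_t]+[h]\,[\lambda_{g,0}]+[g]\,[\lambda_{h,0}]$. Its condition forces $\emptyset\subset\LinVars_g,\LinVars_h\subset\setN$, so Eq.~\eqref{eq:fromSparseToFull} gives $[\lambda_g]=[\lambda_{g,0}]$ and $[\lambda_h]=[\lambda_{h,0}]$, whence $[h]\,[\lambda_{g,0}]+[g]\,[\lambda_{h,0}]=[h]\,[\lambda_g]+[g]\,[\lambda_h]$ exactly. Combining this with the inclusion for $[\lambda_t]$ from the previous step and inclusion monotonicity of $+$ yields $[\lambda_f^\dagger]\subseteq[\lambdaabba([\grad g],[\grad h])]+[h]\,[\lambda_g]+[g]\,[\lambda_h]$, the stated right-hand side. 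It remains to pass from $[\lambda_f^\dagger]$ to $[\lambda_f]$ via Lem.~\ref{lem:fromSparseToFull}. If $\LinVars_f=\emptyset$ then $[\lambda_f]=[\lambda_f^\dagger]$ and we are done; if $\emptyset\subset\LinVars_f\subset\setN$ the padding adds $0$, but then $\hessian{\func}(x)$ has a zero eigenvalue for every $x\in\hyperRec$, and since the right-hand side is exactly the bound delivered by the non-sparse method and is therefore a \emph{valid} spectral enclosure of $\hessian{\func}(x)$ (Thm.~\ref{thm:directArithmetic}), it must contain $0$; hence the $0$-padding cannot leave it, and $[\lambda_f]\subseteq$ right-hand side follows. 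The remaining additive cases (1--3, 5, 6, 8, 9, 11--16) differ only in which of $[\lambda_g^\dagger],[\lambda_{g,0}]$ (and their $h$-counterparts) and which $[\Lambda_r(\cdot,[0,0])]$ wrapper appears, and each such substitution is itself an enlargement, so the same chain of monotone inclusions closes the argument.

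The main obstacle will be the three genuinely different cases 4, 7 and 10, in which $C_\star$ holds and $[\lambda_f^\dagger]$ is the two-by-two eigenvalue operator $[\Lambda_\star(\cdot,\cdot,\cdot)]$ of Eq.~\eqref{eq:intervalOperatorLambdaStar} rather than a sum of intervals. Termwise monotonicity no longer suffices there, because $[\Lambda_\star]$ couples the diagonal and off-diagonal entries. The clean route is the one already used in the proof of Lem.~\ref{lem:sparseBoundsProduct} for case~10: $[\Lambda_\star(\cdot,\cdot,\cdot)]$ is the \emph{tight} spectral enclosure, via Lem.~\ref{lem:specialMatrix}, of the symmetric $2\times2$ interval matrix set $\mathcal{H}$ collecting the reduced Hessians, whereas $[\lambdaabba([\grad g],[\grad h])]+[h]\,[\lambda_g]+[g]\,[\lambda_h]$ is merely \emph{some} valid enclosure of that same set, since it bounds $\mathcal{H}$'s diagonal and off-diagonal entries independently and sums the resulting contributions. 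As any tight enclosure is contained in every valid one, the inclusion follows, and the $0$-padding is handled as above. The only care needed is to verify that this coarse right-hand side really does enclose $\mathcal{H}$ after accounting for the $m=1$ branch of $[\lambdaabba(\cdot,\cdot)]$ and for the fact that $[h]\,[\lambda_g]$ and $[g]\,[\lambda_h]$ each contain $0$ under $C_\star$ (so that their endpoints dominate the individual diagonal ranges); this is exactly the endpoint bookkeeping carried out for the additive cases and constitutes the technical heart of the argument.
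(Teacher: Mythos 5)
Your proposal is correct and, for the fourteen additive cases of Tab.~\ref{tab:CasesReducedBoundsProduct}, follows exactly the route the paper prescribes when it omits this proof as ``similar to its counterparts in Sect.~4.2'': a case-by-case comparison of the padded $[\lambda_f^\dagger]$ with the non-sparse bound via inclusion monotonicity, with the zero inserted by Eq.~\eqref{eq:fromSparseToFull} absorbed because the right-hand side is itself a valid spectral enclosure of a Hessian that has a zero eigenvalue whenever $\emptyset\subset\LinVars_f\subset\setN$. You also correctly repair the statement's hypotheses (the references to Conds.~\ref{cond:gComposition} and Lem.~\ref{lem:sparseBoundsComposition} are evidently typos for Conds.~\ref{cond:ghProduct} and Lem.~\ref{lem:sparseBoundsProduct}). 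Where you go beyond the paper's one-line justification is in cases 4, 7 and 10, and rightly so: there the analogy with Lem.~\ref{lem:sparseTighterSum} genuinely breaks down because $[\Lambda_\star(\cdot,\cdot,\cdot)]$ is not a termwise sum of intervals, and your argument --- $[\Lambda_\star]$ is the \emph{tight} enclosure of the $2\times 2$ interval matrix set of Lem.~\ref{lem:specialMatrix}, while the right-hand side is checked (via Weyl's inequality on the diagonal/off-diagonal splitting, using that $C_\star$ forces $[h]\,[\lambda_g]$, $[g]\,[\lambda_h]$ and $[\Lambda_t([\grad g],[\grad h])]$ all to contain $0$) to be \emph{some} valid enclosure of that same set --- is precisely the ingredient the paper's remark glosses over. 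Two points that are implicit in your sketch should be displayed when writing this up, and both hold: the inclusion $[\Lambda_t([\grad_{\LinVars_f^c}g],[\grad_{\LinVars_f^c}h])]\subseteq[\Lambda_t([\grad g],[\grad h])]$ requires the branch distinction in \eqref{eq:Lambda_abbaNotation} (for $|\LinVars_f^c|=1$ one uses $2\,[a]\,[b]=[a]\,[b]+[a]\,[b]\subseteq[-\beta,\beta]+[a]\,[b]$ since $[a]\,[b]\subseteq[-\beta,\beta]$), and the dropped summands $[\grad g]_i\,[\grad h]_i$, $i\in\LinVars_f=\IndepOfVars_g\cap\IndepOfVars_h$, need only \emph{contain} $0$ rather than equal $[0,0]$, which suffices for the monotonicity of interval addition you invoke.
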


\subsection{Numerical computation of improved eigenvalue bounds}
\label{subsec:NumericalComputation}

In this section, we combine the results from Sects.~\ref{subsec:sparsityHandling} through~\ref{subsec:sparseBoundsProduct}
in order to compute improved eigenvalue bounds using a codelist.
Formally, this leads to the extended codelist in Prop.~\ref{prop:newArithmetic}.

\begin{proposition}[algorithm for the computation of eigenvalue bounds using sparsity]
\label{prop:newArithmetic}
Assume $\func$ is twice continuously differentiable on $\openDomain$ and can be written as a codelist~\eqref{eq:codelist} with $t \in \N$ operations.
Let $\hyperRec = [x_1] \times \dots \times [x_n] \subset \openDomain$ be arbitrary.
  Then, for all $x\in \hyperRec$, we have $\func(x)\in [\func]$,
  $\grad{\func}(x)\in[\grad{\func}]$, 
  and $[\lambda_{\min}(\hessian{\func}(x)),\lambda_{\max}(\hessian{\func}(x))] \subseteq [\lambda_\func]$, where 
  $[\func]$, $[\grad{\func}]$, and $[\lambda_\func]$ are calculated 
  by the following algorithm.  
  \begin{enumerate} 
  \item For $k= 1, \dots, n$, 
    set $\IndepOfVars_k=\setN\setminus \{k\}$, $\LinVars_k = \setN$, $[y_k]= [\lb{x}_k, \ub{x}_k]$,
    $[\grad y_k]= [e_k,e_k]$, and
    $[\lambda_k^\dagger]= [0, 0]$. 
  \item For $k= n+1, \dots, n+t$, evaluate $\IndepOfVars_k$ and $\LinVars_k$ according to the third and fourth column of Tab.~\ref{tab:indexSets}, respectively.
    Calculate $[y_k]$ and $[\grad y_k]$
    according to the third and fourth column of 
    Tab.~\ref{tab:oldArithmetic}, respectively. Compute $[\lambda_k^\dagger]$ depending on $\LinVars_i$, $\LinVars_j$, and $\LinVars_k$  according to the second column of Tab.~\ref{tab:newArithmetic} in the appendix.  
\item Compute $[\lambda_{n+t}]$ from $[\lambda_{n+t}^\dagger]$ and $\LinVars_{n+t}$ according to Eq.~\eqref{eq:fromSparseToFull} and set $[\func]= [y_{n+t}]$, 
    $[\grad{\func}]= [\grad y_{n+t}]$, 
    and $[\lambda_\func]= [\lambda_{n+t}]$. 
  \end{enumerate} 

\end{proposition}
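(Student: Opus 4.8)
The plan is to establish the three claimed enclosures by induction over the codelist lines $k=1,\dots,n+t$, carrying along invariants at each line that are strong enough to serve as the hypotheses of the lemmas of Sects.~\ref{subsec:sparseBoundsSum}--\ref{subsec:sparseBoundsProduct} at the subsequent lines. Viewing each intermediate quantity $y_k$ as a function of $x\in\hyperRec$, I would maintain the induction hypothesis that, for all $x\in\hyperRec$: (i) $y_k$ is independent of $x_i$ for all $i\in\IndepOfVars_k$ and depends at most linearly on $x_i$ for all $i\in\LinVars_k$, with $\IndepOfVars_k\subseteq\LinVars_k$ and $\IndepOfVars_k\subset\setN$; (ii) $y_k(x)\in[y_k]$ and $\grad y_k(x)\in[\grad y_k]$; and (iii) the eigenvalues of the reduced Hessian $\reducedHessian{\LinVars_k^c}{y_k}(x)$ lie in $[\lambda_k^\dagger]$, i.e.\ $[\lambda_k^\dagger]$ satisfies the bound~\eqref{eq:fConditionSparseEig} with $f$ replaced by $y_k$. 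The standing assumption stated after Def.~\ref{def:ReducedGradientWithoutEmbedder}, namely that a codelist line is never independent of all variables, guarantees the strict inclusion $\IndepOfVars_k\subset\setN$ needed in Conds.~\ref{cond:ghSum}--\ref{cond:ghProduct}.

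The base case $k\le n$ is immediate from Step~1: $y_k=x_k$ is independent of every $x_i$ with $i\ne k$ and at most linear in every variable, matching $\IndepOfVars_k=\setN\setminus\{k\}$ and $\LinVars_k=\setN$; hence $\LinVars_k^c=\emptyset$, the reduced Hessian is the empty matrix $0_{0,0}$, and $[\lambda_k^\dagger]=[0,0]$ is correct. The enclosures $[y_k]=[x_k]$ and $[\grad y_k]=[e_k,e_k]$ are exact.

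For the inductive step at a line $k>n$ I would split on the elementary operation $\Phi_k$. The interval value $[y_k]$ and interval gradient $[\grad y_k]$ are computed by the third and fourth columns of Tab.~\ref{tab:oldArithmetic}, so invariant (ii) follows exactly as in Thm.~\ref{thm:directArithmetic} by standard AD and IA. The index sets $\IndepOfVars_k,\LinVars_k$ are computed by Tab.~\ref{tab:indexSets}, whose correctness is precisely Lem.~\ref{lem:fAtMostLinearSum} for a sum, Lem.~\ref{lem:fAtMostLinearComposition} for a composition, and Lem.~\ref{lem:fAtMostLinearProduct} for a product; this yields invariant (i), and the containment $\IndepOfVars_k\subseteq\LinVars_k$ is preserved because independence is the special case $c=0$ of at most linear dependence. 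The reduced eigenvalue bound $[\lambda_k^\dagger]$ is taken from Tab.~\ref{tab:newArithmetic}, whose entries reproduce the formulas of Lem.~\ref{lem:sparseBoundsSum}, Lem.~\ref{lem:sparseBoundsComposition}, and Lem.~\ref{lem:sparseBoundsProduct}, respectively; this yields invariant (iii). The only point requiring attention is that each of these lemmas is stated under Conds.~\ref{cond:ghSum}, \ref{cond:gComposition}, or~\ref{cond:ghProduct}. I must therefore check that the operands $y_i$ (and, for {\tt add} and {\tt mul}, $y_j$) fed into line $k$ meet those preconditions. They do: the required twice continuous differentiability is inherited from $\func$, the at most linear and independence properties with the index sets $\LinVars_i,\IndepOfVars_i$ (and $\LinVars_j,\IndepOfVars_j$) are exactly invariant (i) at lines $i,j<k$, and the auxiliary enclosures $[y_i],[\grad y_i]$ (and $[y_j],[\grad y_j]$), together with the outer-derivative enclosures $[r'],[r'']$ needed for a composition, are supplied by invariant (ii) and the IA rules of Tab.~\ref{tab:intervalArithmetic}. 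Since every operation in Tab.~\ref{tab:oldArithmetic} is a sum ({\tt add}), a product ({\tt mul}), or a composition $r(y_i)$ with a unary function $r$ (the operations {\tt powNat}, {\tt oneOver}, {\tt sqrt}, {\tt exp}, {\tt ln}, {\tt addC}, {\tt mulByC}), the case split is exhaustive.

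Finally, the enclosures $\func(x)\in[\func]$ and $\grad\func(x)\in[\grad\func]$ are the $k=n+t$ instance of invariant (ii), since $\func=y_{n+t}$. For the eigenvalue claim, invariant (iii) at $k=n+t$ bounds the spectrum of the reduced Hessian $\reducedHessian{\LinVars_{n+t}^c}{\func}(x)$ by $[\lambda_{n+t}^\dagger]$; applying Lem.~\ref{lem:fromSparseToFull} with $f=\func$, as prescribed by Step~3 and Eq.~\eqref{eq:fromSparseToFull}, converts these into valid bounds $[\lambda_\func]=[\lambda_{n+t}]$ for the full Hessian $\hessian\func(x)$. I expect the only real obstacle to be bookkeeping rather than mathematics: verifying operation by operation that the preconditions of Conds.~\ref{cond:ghSum}--\ref{cond:ghProduct} are met and that the appendix table~\ref{tab:newArithmetic} transcribes the lemma formulas faithfully; the substantive estimates have already been discharged in the preceding lemmas.
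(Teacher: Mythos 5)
Your proposal is correct and follows essentially the same route as the paper's own proof: an induction over the codelist lines, with the base case for the variable lines, the inductive step delegated to Lems.~\ref{lem:fAtMostLinearSum}/\ref{lem:fAtMostLinearComposition}/\ref{lem:fAtMostLinearProduct} for the index sets and Lems.~\ref{lem:sparseBoundsSum}/\ref{lem:sparseBoundsComposition}/\ref{lem:sparseBoundsProduct} for the reduced-Hessian bounds, the value/gradient enclosures inherited from Thm.~\ref{thm:directArithmetic}, and the final conversion via Lem.~\ref{lem:fromSparseToFull}. The paper merely works out a single representative case ({\tt mul}, rule 12) and asserts the others are analogous, whereas you state the invariants and the verification of Conds.~\ref{cond:ghSum}--\ref{cond:ghProduct} somewhat more explicitly; there is no substantive difference.
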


\begin{table}[htp] 
\caption{Rules for the computation of the sets $\IndepOfVars_k$ and $\LinVars_k$ in the $k$-th line of the codelist \eqref{eq:codelist} for variables and binary operations (left) and compositions (right).
Rules for $y_k$ are repeated here for convenience.}
\label{tab:indexSets}
    \centering
\small
\begin{tabular}{l|l|l|l} 
  {\tt op} $\Phi_k$& $y_k$        & $\IndepOfVars_k$                  
                                  & $\LinVars_k$  \\\hline \hline 
  {\tt var}        & $x_k$        & $\setN\setminus \{k\}$ & $\setN$  \\
\hline   {\tt add}        & $y_i+ y_j$   & $\IndepOfVars_i \cap \IndepOfVars_j$  & $\LinVars_i \cap \LinVars_j$\\ 
  {\tt mul}        & $y_i\,y_j$   & $\IndepOfVars_i \cap \IndepOfVars_j$  & $\IndepOfVars_i \cap \IndepOfVars_j$ \\
  \multicolumn{1}{l}{\textcolor{white}{{\tt a}}} \\ 
  \multicolumn{1}{l}{\textcolor{white}{{\tt a}}} \\ 
    \multicolumn{1}{l}{\textcolor{white}{{\tt a}}} \\ 
      \multicolumn{1}{l}{\textcolor{white}{{\tt a}}} 

\end{tabular} \qquad \quad
\begin{tabular}{l|l|l|l} 
  {\tt op} $\Phi_k$& $y_k$        & $\IndepOfVars_k$                  
                                  & $\LinVars_k$  \\\hline \hline 
  {\tt powNat}     & $y_i^m$      & $\IndepOfVars_i$          & $\IndepOfVars_i$\\ 

  {\tt oneOver}    & $1/y_i$      & $\IndepOfVars_i$          & $\IndepOfVars_i$\\ 

  {\tt sqrt}       & $\sqrt{y_i}$ & $\IndepOfVars_i$          & $\IndepOfVars_i$\\ 
  {\tt exp}        & $\exp(y_i)$  & $\IndepOfVars_i$          & $\IndepOfVars_i$\\ 
  {\tt ln}         & $\ln(y_i)$   & $\IndepOfVars_i$          & $\IndepOfVars_i$\\ 
                                    {\tt addC}   & $y_i+ c$     & $\IndepOfVars_i$          & $\LinVars_i$\\ 
  {\tt mulByC} & $c\,y_i$     & $\IndepOfVars_i$          & $\LinVars_i$ 
\end{tabular} 
\end{table}
\MmoThirdRound{ithenticate laufen lassen. (Done, currently 24\%, which is acceptable, since most of it results from standard statements and citations.)}
\begin{proof}
The claims $\func(x)\in [\func]$ and 
  $\grad{\func}(x)\in[\grad{\func}]$ for all $x\in \hyperRec$ are covered by Thm.~\ref{thm:directArithmetic}.
 It remains to prove that
    $[\lambda_{\min}(\hessian{\func}(x)),\lambda_{\max}(\hessian{\func}(x))] \subseteq [\lambda_\func]$ for all $x\in \hyperRec$. 
Since $y_k(x)=x_k$ for $k=1, \dots, n$, the functions $y_k(x)$, $k \in \setN$, are independent of $x_j$ for every $j\in \IndepOfVars_k=\setN\setminus \{k\}$ and at most linearly dependent on $x_j$ for every  $j \in \LinVars_k = \setN$. 
Thus, the reduced Hessian reads $\reducedHessian{\LinVars_k^c}{y_k(x)}=\reducedHessian{\emptyset}{y_k(x)}= 0_{0,0}$  and  $[\lambda_k^\dagger]=[0,0]$ for every $k \in \setN$.
Now assume eigenvalue bounds
    $[\lambda_1^\dagger], \dots, [\lambda_l^\dagger]$ for the reduced Hessians $\reducedHessian{\LinVars_1^c}{y_1(x)},\dots,\reducedHessian{\LinVars_l^c}{y_l(x)}$ and index sets $\IndepOfVars_1,\dots,\IndepOfVars_l$ and $\LinVars_1,\dots,\LinVars_l$ have been calculated for some     
    $l\in \N_{n,n+t-1}$,
    and let $k= l+ 1$. Since $\Phi_k(y_1,
    \dots, y_{k-1})$ is one of the unary or binary functions listed in
    Tab.~\ref{tab:oldArithmetic} (and therefore Tabs.~\ref{tab:indexSets} and \ref{tab:newArithmetic}), it depends on either one (say $y_i$)
    or two (say $y_i$ and $y_j$) of the intermediate variables 
    $y_1, \dots, y_{k-1}$.
        The remainder of the proof must be carried out for each type of 
    operation $\Phi_k$ separately. 
    We state the proof for one of the {\tt mul} cases and claim the remaining cases can be shown accordingly. Let $g(x)= y_i(x)$,
    $h(x)= y_j(x)$, and $f(x)= y_k(x)$, which implies $f(x)=
      g(x)\,h(x)$, since the operation in the $k$-th line is of type {\tt mul}. 
      In order to compute eigenvalue bounds $[\lambda_f^\dagger]$ for the reduced Hessian  $\reducedHessian{\LinVars_f^c}{f(x)}$, we first evaluate the index sets $\IndepOfVars_f$ and $\LinVars_f$.
      According to Lem.~\ref{lem:fAtMostLinearProduct}, we obtain 
      $$\IndepOfVars_f = \IndepOfVars_g \cap \IndepOfVars_h = \IndepOfVars_i \cap \IndepOfVars_j  \quad \text{and} \quad \LinVars_f = \IndepOfVars_g \cap \IndepOfVars_h = \IndepOfVars_i \cap \IndepOfVars_j, $$
where we used $\IndepOfVars_g = \IndepOfVars_i$ and   $ \IndepOfVars_h = \IndepOfVars_j$,  which hold by construction.    
Assuming we have $\LinVars_g \cup \LinVars_h \subset \setN$ and $\LinVars_f=   \LinVars_g \subset \LinVars_h$,   
applying  Lem.~\ref{lem:sparseBoundsProduct} (specifically, rule 12 in Tab.~\ref{tab:CasesReducedBoundsProduct}) results in 
\begin{equation}
\label{eq:reducedBoundsMulProof}
\begin{array}{r@{\,\,}c@{\,\,}l@{\,}c@{\,}l@{\,}c@{\,}l}
[\lambda_f^\dagger]&=&[\Lambda_t([\grad_{\LinVars_f^c} g],[\grad_{\LinVars_f^c} h])] &+&[h]\,[\lambda_g^\dagger] &+&[g]\,[\min\{\lb{\lambda}_h^\dagger,0\},\max\{\lb{\lambda}_h^\dagger,0\}], \\
&=& [\Lambda_t([\grad_{\LinVars_k^c} y_i],[\grad_{\LinVars_k^c} y_j])]&+&[y_j]\,[\lambda_i^\dagger] &+&[y_i]\,[\min\{\lb{\lambda}_j^\dagger,0\},\max\{\lb{\lambda}_j^\dagger,0\}],
\end{array}
\end{equation}   
where the second equation results from
    substituting the codelist notation $[g]= [y_i]$, $[h]=
    [y_j]$, $[\grad{g}]= [ \grad y_i]$, $[\grad{h}]= [\grad y_j]$,
    $[\lambda_g^\dagger]= [\lambda_i^\dagger]$, and $[\lambda_h^\dagger]= 
    [\lambda_j^\dagger]$. Finally, since $\LinVars_k = \LinVars_f$ and consequently $\reducedHessian{\LinVars_k^c}{y_k(x)}= \reducedHessian{\LinVars_f^c}{f(x)}$,
    the eigenvalues of $\reducedHessian{\LinVars_k^c}{y_k(x)}$ are 
    confined to $[\lambda_k^\dagger]= [\lambda_f^\dagger]$ for all $x\in \hyperRec$. 
    Since the second equation in~\eqref{eq:reducedBoundsMulProof} is equal to the rule
    in Tab.~\ref{tab:newArithmetic} 
    for the case {\tt mul} and $\LinVars_i \cup \LinVars_j \subset \setN$ and $\LinVars_k=   \LinVars_i \subset \LinVars_j$, this proves the claim for the selected case.   
\end{proof}

Proposition~\ref{prop:newArithmetic} is illustrated with two examples.
First, we revisit the motivating Exmp.~\ref{example:BadEigvalApproxForSum}.
Recall that we evaluated the conservative eigenvalue bounds $[\lambda_\func]=[0,4]$ using the original method from~\cite{Monnigmann2011a}.

\begin{example}[improved method applied to  $\func(x)=x_1^2 + x_2^2$ from Exmp.~\ref{example:BadEigvalApproxForSum}]
\label{exmp:RevisitEx1UsingSparsity}
Consider the function $\func$ from Exmp.~\ref{example:BadEigvalApproxForSum} again. 
Proposition~\ref{prop:newArithmetic} results in the following extended codelist. Note that we do not list the expressions for $[y_k]$ and $[\grad y_k]$ in~\eqref{eq:extendedCodelistExample1Sparse} since they are identical to the corresponding expressions in~\eqref{eq:extendedCodelistExample1}. 
Further note that $\IndepOfVars_k$ and $\LinVars_k$ are independent of $\hyperRec$.
\begin{equation}
\label{eq:extendedCodelistExample1Sparse}
\small
\centering
\begin{tabular}{l|l@{\,\,}ll@{\,\,}ll} 
  $k$        &     $\IndepOfVars_k$  &   & $\LinVars_k$ &  & $[\lambda_k^\dagger]$    \\
  \hline 
  \hline 
 $1$        & $\setN\setminus \{1\}$ & $= \{2\}$    & $\setN$ & $= \{1,2\}$ & $[0,0]$ \\
$2$        &  $\setN\setminus \{2\}$ & $= \{1\}$ & $\setN$  & $= \{1,2\}$ & $[0,0]$ \\
\hline

 $3$      & $\IndepOfVars_1$ & $= \{2\}$ & $\IndepOfVars_1$ & $= \{2\}$ & $2\,[\lambdaaaT([\grad_{\LinVars_3^c} y_1])]=2\,[\lambdaaaT([\grad_{\{1\}} y_1])]$  \\ 
                                  
$4$      & $\IndepOfVars_2$  & $= \{1\}$ & $\IndepOfVars_2$ & $= \{1\}$ & $2\,[\lambdaaaT([\grad_{\LinVars_4^c} y_2])]=2\,[\lambdaaaT([\grad_{\{2\}} y_2])]$   \\ 
                                  
$5$   & $\IndepOfVars_3 \cap \IndepOfVars_4$ & $= \emptyset$ & $\LinVars_3 \cap \LinVars_4$ & $= \emptyset$ & $[\min\{\lb{\lambda}_3^\dagger,\lb{\lambda}_4^\dagger\},\max\{\ub{\lambda}_3^\dagger,\ub{\lambda}_4^\dagger\}]$   \\ 
\hline 
&  &  & & & $[\lambda_\varphi] = [\lambda_5^\dagger]$
\end{tabular}
\end{equation}
The expressions for $[\lambda_k^\dagger]$ in lines $3$ and $4$ of the extended codelist in~\eqref{eq:extendedCodelistExample1Sparse} refer to the first rule associated with the  {\normalfont \texttt{powNat}}-operation in Tab.~\ref{tab:newArithmetic} since $\LinVars_1=\setN$ and $\LinVars_2=\setN$, respectively.
Since  $\LinVars_3 = \{2\} \subset \setN$, $\LinVars_4 = \{1\} \subset \setN$, and $\LinVars_3 \cup \LinVars_4 = \setN$, we
 obtain the bounds $[\lambda_5^\dagger]=[\min\{\lb{\lambda}_3^\dagger,\lb{\lambda}_4^\dagger\},\max\{\ub{\lambda}_3^\dagger,\ub{\lambda}_4^\dagger\}]$ according to the last rule for the {\normalfont \texttt{add}}-operation  in  Tab.~\ref{tab:newArithmetic}.
Finally, since $\LinVars_5 = \emptyset$, we have $[\lambda_\func]=[\lambda_5]=[\lambda_5^\dagger]$ according to Eq.~\eqref{eq:fromSparseToFull}.

Evaluating the extended codelist~\eqref{eq:extendedCodelistExample1Sparse} for the hyperrectangle $\hyperRec = [0,1] \times [0,1]$ (as in Exmp.~\ref{example:BadEigvalApproxForSum}) by computing 
 $[y_k]$ and $[\grad y_k]$ according to~\eqref{eq:extendedCodelistExample1} and $[\lambda_k]$ according to~\eqref{eq:extendedCodelistExample1Sparse} yields
 $[\lambda_1^\dagger]=[\lambda_2^\dagger]=[0,0]$ and $[\lambda_3^\dagger]=[\lambda_4^\dagger]=[\lambda_5^\dagger]=[2,2]$, 
 where we used $[\lambdaaaT([\grad_{\{1\}} y_1])]=[\lambdaaaT([1,1])]=[1,1]$ and $[\lambdaaaT([\grad_{\{2\}} y_2])]=[\lambdaaaT([1,1])]=[1,1]$ (see Eq.~\eqref{eq:extendedCodelistExample1Results} for numerical results on $[y_k]$ and $[\grad y_k]$).
 Thus, using the improved method, we obtain the tight eigenvalue bounds $[\lambda_\func]=[\lambda_5^\dagger]=[\lambda_\func^\ast]=[2,2]$.
\end{example}

We analyze another example to demonstrate that the new method results in considerable improvements for all functions that involve multiplications. 
In fact, we know from \cite[Rem. 4.3]{Monnigmann2011a} that $0 \in [\lambda_\func]$ for the original method if the {\normalfont\texttt{mul}}-operation is required in the codelist of any $\func$ with $n \geq 2$. This is a severe drawback of the original method, 
since it implies that any convex (resp. concave) function $\func:\R^n \rightarrow \R$ involving {\normalfont\texttt{mul}}-operations will never be identified to be convex (resp. concave) using the method from~\cite{Monnigmann2011a}. The following example shows that this restriction does not apply for the improved method.

\begin{example}[comparison of \cite{Monnigmann2011a} and improved method for $\func(x)=x_1^2 + x_2 \exp(x_2)$]
\label{example:mulSparse}
Consider the function $\func:\R^2 \rightarrow \R$ with $\func(x)=x_1^2 + x_2 \exp(x_2)$ on a $\hyperRec\subset \R^2$.
Theorem~\ref{thm:directArithmetic} (i.e., the original method from \cite{Monnigmann2011a}) results in the following extended codelist, where the expressions for $y_k$ are only listed for illustration of the codelist~\eqref{eq:codelist} of $\func$.
We skip the first three lines, since they are identical to those in~\eqref{eq:extendedCodelistExample1}.
\begin{equation}
\label{eq:extendedCodelistExample3}
\centering
\small
\!\begin{tabular}{l@{\,\,}|l@{\,\,}|l@{\,\,}|l@{\,\,}|l@{\,}} 
 $k$ &  $y_k$        &  $[y_k]$   &  $[\grad y_k]$  &  $[\lambda_k]$ \\\hline \hline 
                    
4 & $\exp(y_2)$     & $[\exp([y_2])]$        
                                  & $[y_4]\,[\grad y_2]$ 
                                  & $[y_4]\,([\lambdaaaT([\grad y_2])]+[\lambda_2])$  \\ 
                                  
5 & $y_2\,y_4$    & $[y_2]\,[y_4]$        
                                  & $[y_4] [\grad y_2]\!+\! [y_2] [\grad y_4]$ 
                                  & $[y_4] [\lambda_2]\!+ \![y_2] [\lambda_4]\!+\! [\lambdaabba([\grad y_2], [\grad y_4])]$ \\ 
6 & $y_3+y_5$    & $[y_3]+ [y_5]$        
                                  & $[\grad y_3]+ [\grad y_5]$ 
                                  & $[\lambda_3]+ [\lambda_5]$ \\ 
                                  \hline
                                  & $\func = y_6$ & $[\func]=[y_6]$ & $[\grad \func]=[\grad y_6]$ & $[\lambda_\func]=[\lambda_6]$  
\end{tabular}\!\!\!
\end{equation}
Evaluating this codelist for $\hyperRec = [0,1] \times [0,1]$ yields
$$[\lambda_\func]=[\lambda_6]=[-\exp(1)+1,3\exp(1)+2]\approx [-1.7183,10.1548].$$

Proposition~\ref{prop:newArithmetic} (i.e., the improved method) results in the following extended codelist. 
The first three lines are identical to those in~\eqref{eq:extendedCodelistExample1Sparse} in this case. 
Further note that the expressions for $[y_k]$ and $[\grad y_k]$ can be found in Eq.~\eqref{eq:extendedCodelistExample1} (lines 1-3) and Eq.~\eqref{eq:extendedCodelistExample3} (lines 4-6).
\begin{equation}
\label{eq:extendedCodelistExample3Sparse}
\centering
\small
\begin{tabular}{l@{\,\,}|l@{\,\,}ll@{\,\,}l@{\,\,}|l} 
  $k$        &     $\IndepOfVars_k$    &       
                                  & $\LinVars_k$  &
                                  & $[\lambda_k^\dagger]$ \\\hline \hline 
                                  
$4$      & $\IndepOfVars_2$  &  $= \{1\}$       
                                  & $\IndepOfVars_2$  &  $= \{1\}$  
                                  & $[y_4]\,[\lambdaaaT([\grad_{\LinVars_4^c} y_2])]=[y_4]\,[\lambdaaaT([\grad_{\{2\}} y_2])]$  \\ 
                                  
$5$   & $\IndepOfVars_2 \cap \IndepOfVars_4$  &  $= \{1\}$       
                                  & $\IndepOfVars_2 \cap \IndepOfVars_4$ & $= \{1\}$  
                                  & $[\Lambda_t([\grad_{\LinVars_5^c} y_2],[\grad_{\LinVars_5^c} y_4])]\!+\![y_4]\,[\lambda_2^\dagger]\!+\![y_2]\,[\lambda_4^\dagger]$  \\    
  &  &   &  &   & $=[\Lambda_t([\grad_{\{2\}} y_2],[\grad_{\{2\}} y_4])]\!+\![y_4]\,[\lambda_2^\dagger]\!+\![y_2]\,[\lambda_4^\dagger]$  \\   
$6$   & $\IndepOfVars_3 \cap \IndepOfVars_5$  &  $= \emptyset$       
                                  & $\LinVars_3 \cap \LinVars_5$ & $= \emptyset$  
                                  & $[\min\{\lb{\lambda}_3^\dagger,\lb{\lambda}_5^\dagger\},\max\{\ub{\lambda}_3^\dagger,\ub{\lambda}_5^\dagger\}]$   \\ 
\hline 
&  &  & & & $[\lambda_\varphi] = [\lambda_6^\dagger]$
\end{tabular}\!\!\!
\end{equation}
Evaluating~\eqref{eq:extendedCodelistExample3Sparse} for $\hyperRec = [0,1] \times [0,1]$ yields
$$[\lambda_\func]=[\lambda_6^\dagger]=[2,3\exp(1)]\approx [2,8.1548],$$
Just as in Exmp.~\ref{example:mulSparse}, the improved method results in tight spectral bounds while the original method from~\cite{Monnigmann2011a} provides loose outer approximations.
In particular, $0 \in [-1.7183,10.1548]$ for the original method as predicted by \cite[Rem. 4.3]{Monnigmann2011a} but $0 \notin [2,8.1548]$ for the improved method presented here. Convexity of $\varphi$ on $\hyperRec$ can therefore be established with the improved but not with the original method.
\end{example}

More generally, the improved method results in 
eigenvalue bounds that are always as tight as, or tighter than, the original method from~\cite{Monnigmann2011a}, as stated in the following proposition.

\begin{proposition}[accuracy of the improved method]
\label{prop:accuracy}
Assume $\func$ is twice continuously differentiable on $\openDomain$ and can be written as a codelist~\eqref{eq:codelist}.
Let $\hyperRec = [x_1] \times \dots \times [x_n] \subset \openDomain$ be arbitrary and denote the eigenvalue bounds for $\hessian{\func}(x)$ on~$\hyperRec$ computed according to Thm.~\ref{prop:newArithmetic} and Prop.~\ref{prop:newArithmetic} by
 $[\lambda_\func^{(\normalfont\text{Thm.~\ref{thm:directArithmetic}})}]$ and $[\lambda_\func^{(\normalfont\text{Prop.~\ref{prop:newArithmetic}})}]$, respectively.
Then, $$[\lambda_\func^{(\normalfont\text{Prop.~\ref{prop:newArithmetic}})}] \subseteq [\lambda_\func^{(\normalfont\text{Thm.~\ref{thm:directArithmetic}})}].$$
\end{proposition}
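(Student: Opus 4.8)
The plan is to prove the inclusion by induction over the lines $k=1,\dots,n+t$ of the codelist~\eqref{eq:codelist}, reusing the inductive skeleton already employed in the proof of Prop.~\ref{prop:newArithmetic}. The one preliminary device I need is a \emph{full} eigenvalue bound for the improved method at \emph{every} line, not just at the output line $n+t$. For each $k$ let $[\lambda_k^{(\mathrm{P})}]$ be the interval obtained by feeding the reduced bound $[\lambda_k^\dagger]$ and the index set $\LinVars_k$ produced by Prop.~\ref{prop:newArithmetic} into Eq.~\eqref{eq:fromSparseToFull}, and let $[\lambda_k^{(\mathrm{T})}]$ be the bound that Thm.~\ref{thm:directArithmetic} assigns to line $k$. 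At $k=n+t$ these are exactly the two intervals $[\lambda_\func^{(\text{Prop.~\ref{prop:newArithmetic}})}]$ and $[\lambda_\func^{(\text{Thm.~\ref{thm:directArithmetic}})}]$ of the statement, so it suffices to establish the stronger line-wise inclusion $[\lambda_k^{(\mathrm{P})}]\subseteq[\lambda_k^{(\mathrm{T})}]$ for all $k$.

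The base case $k\le n$ is immediate, as both algorithms assign $[0,0]$. For the inductive step I would branch on the type of the operation $\Phi_k$, exactly as in the proof of Prop.~\ref{prop:newArithmetic}. Consider a binary {\tt add} line with inputs $i,j$. The tighter-sum lemma (Lem.~\ref{lem:sparseTighterSum}), whose hypotheses Conds.~\ref{cond:ghSum} are furnished by the validity part of Prop.~\ref{prop:newArithmetic}, gives $[\lambda_k^{(\mathrm{P})}]\subseteq[\lambda_i^{(\mathrm{P})}]+[\lambda_j^{(\mathrm{P})}]$. The inductive hypothesis gives $[\lambda_i^{(\mathrm{P})}]\subseteq[\lambda_i^{(\mathrm{T})}]$ and $[\lambda_j^{(\mathrm{P})}]\subseteq[\lambda_j^{(\mathrm{T})}]$, and inclusion-isotonicity of interval addition then yields $[\lambda_i^{(\mathrm{P})}]+[\lambda_j^{(\mathrm{P})}]\subseteq[\lambda_i^{(\mathrm{T})}]+[\lambda_j^{(\mathrm{T})}]=[\lambda_k^{(\mathrm{T})}]$, where the last equality is the {\tt add} rule of Tab.~\ref{tab:oldArithmetic}. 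Chaining the two inclusions closes this case. The {\tt mul} case and the unary/composition cases ({\tt powNat}, {\tt oneOver}, {\tt sqrt}, {\tt exp}, {\tt ln}) proceed identically, invoking Lem.~\ref{lem:sparseTighterProduct} and Lem.~\ref{lem:sparseTighterComposition} in place of Lem.~\ref{lem:sparseTighterSum} and using that their right-hand sides $[\Lambda_t([\grad g],[\grad h])]+[h]\,[\lambda_g]+[g]\,[\lambda_h]$ and $[r^{\prime\prime}]\,[\Lambda_s([\grad g])]+[r^\prime]\,[\lambda_g]$ are precisely the {\tt mul} entry and, after substituting the operation-specific $[r^\prime],[r^{\prime\prime}]$, the respective unary entries of Tab.~\ref{tab:oldArithmetic}. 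The affine lines {\tt addC} and {\tt mulByC} are trivial: there $\LinVars_k=\LinVars_i$ and $[\lambda_k^\dagger]=[\lambda_i^\dagger]$ (resp.\ $c\,[\lambda_i^\dagger]$), so $[\lambda_k^{(\mathrm{P})}]\subseteq[\lambda_k^{(\mathrm{T})}]$ follows from the hypothesis and isotonicity of $[\cdot]+c$ resp.\ $c\,[\cdot]$.

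The main obstacle is the monotonicity step that glues the induction together: each tighter lemma bounds $[\lambda_k^{(\mathrm{P})}]$ only by the original rule evaluated at the \emph{improved} input bounds, whereas $[\lambda_k^{(\mathrm{T})}]$ is that same rule evaluated at the \emph{original} input bounds, and the inductive hypothesis is what bridges the two. For this to be legitimate I must verify that \emph{every} interval operator appearing in Tab.~\ref{tab:oldArithmetic} is inclusion-isotone in each of its interval arguments. For $+$, interval multiplication, and multiplication by the factors $[y_k]$, $[r^\prime]$, $[r^{\prime\prime}]$ this is the standard inclusion-isotonicity of interval arithmetic recorded in Lem.~\ref{lem:intervalArithmetic}; the two custom operators $[\Lambda_s(\cdot)]$ and $[\Lambda_t(\cdot,\cdot)]$ need a short direct check from~\eqref{eq:lambdaaaT} and~\eqref{eq:Lambda_abbaNotation}, since each of the terms $\max\{\lb{a}_i^2,\ub{a}_i^2\}$, the products $[\lb{a}_i,\ub{a}_i]\,[\lb{b}_i,\ub{b}_i]$, and the radius $\beta$ shrinks when the argument boxes shrink. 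The one point demanding genuine care is that the original rule as it actually appears in Thm.~\ref{thm:directArithmetic} must denote the \emph{same} interval as the right-hand sides of Lems.~\ref{lem:sparseTighterProduct} and~\ref{lem:sparseTighterComposition}: because interval arithmetic is only sub-distributive, one has to confirm that the possibly factored form written in Tab.~\ref{tab:oldArithmetic} and the expanded product/composition formulas used in those lemmas coincide, so that the isotonicity argument is applied to one and the same monotone expression rather than to two forms that differ by sub-distributivity.
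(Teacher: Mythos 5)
Your proposal is correct and follows essentially the same route as the paper: the paper's entire proof is the single sentence that the claim follows immediately from the tighter-bound lemmas (Lems.~\ref{lem:sparseTighterSum}, \ref{lem:sparseTighterComposition}, and~\ref{lem:sparseTighterProduct}), and your line-wise induction together with the inclusion-isotonicity of the operators in Tab.~\ref{tab:oldArithmetic} is precisely the scaffolding that this one-line citation implicitly relies on. The sub-distributivity caveat you raise at the end (factored versus expanded forms of the unary and product rules) is a genuine subtlety that the paper does not address either; it is best resolved by comparing the entries of Tab.~\ref{tab:newArithmetic} and Tab.~\ref{tab:oldArithmetic} directly in their common factored form (using isotonicity of $[a]\cdot(\,\cdot\,)$ in the bracketed argument) rather than passing through the expanded right-hand sides quoted in the tighter lemmas.
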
 
\begin{proof}
The proof immediately follows from Lems.~\ref{lem:sparseTighterSum}, \ref{lem:sparseBoundsComposition}, and ~\ref{lem:sparseBoundsProduct}.
\end{proof}

In~\cite[Prop. 4.4]{Monnigmann2011a} it was shown that the numerical complexity for evaluating the extended codelist resulting from Thm.~\ref{thm:directArithmetic},
is of order $\Order(n)\,N(\func)$, where $N(\func)$ denotes the number of operations needed to evaluate $\func$ at a point in its domain.
It is remarkable that this order of complexity can be maintained for the improved method. This is summarized in the following proposition.

\begin{proposition}[numerical complexity of the improved method]
\label{prop:numericalComplexity}
Assume $\func$ is twice continuously differentiable on $\openDomain$ and can be written as a codelist~\eqref{eq:codelist} with $t = N(\func)$ operations.
Let $N ([\func], [\grad \func], [\lambda_\func])$ denote the
number of operations that are necessary to calculate the bounds $[\func] \subset \R$, $[\grad \func] \subset \R^n$, and
 $[\lambda_\func] \subset \R$ for a given hyperrectangle $\hyperRec \subset \openDomain$ using the extended codelist from Prop.~\ref{prop:newArithmetic}.
Then, $$N ([\func], [\grad \func], [\lambda_\func]) = \Order(n)\,N(\func).$$
\end{proposition}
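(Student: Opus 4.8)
The plan is to bound the work required for each line of the extended codelist by $\Order(n)$ and then to sum this over the $t = N(\func)$ operation lines. The argument parallels that of \cite[Prop. 4.4]{Monnigmann2011a}, where the non-sparse method of Thm.~\ref{thm:directArithmetic} was shown to require $\Order(n)$ operations per line. The only genuinely new task is therefore to verify that the additional bookkeeping introduced in Prop.~\ref{prop:newArithmetic} — namely the index sets $\IndepOfVars_k$, $\LinVars_k$ and the use of \emph{reduced} gradients in place of full ones — does not exceed this per-line budget.

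First I would decompose the cost of evaluating line $k$, for $k=n+1,\dots,n+t$, into four contributions: (a) the index sets $\IndepOfVars_k$ and $\LinVars_k$; (b) the scalar interval $[y_k]$; (c) the interval gradient $[\grad y_k]$; and (d) the reduced eigenvalue bound $[\lambda_k^\dagger]$. For (a), every rule in Tab.~\ref{tab:indexSets} forms $\IndepOfVars_k$ and $\LinVars_k$ either by copying a predecessor set or by intersecting two subsets of $\setN$; since such sets contain at most $n$ elements, this costs $\Order(n)$. Contribution (b) is a single scalar interval operation from Tab.~\ref{tab:oldArithmetic} and costs $\Order(1)$. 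Contribution (c) scales and adds interval gradient vectors of dimension $n$ and therefore costs $\Order(n)$, exactly as in \cite{Monnigmann2011a}. Since the codelist draws its operations from the fixed set in Tab.~\ref{tab:oldArithmetic}, the per-line cost of (a)--(c) is bounded by the maximum over a constant number of operation types.

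The crux is contribution (d). Each rule in Tab.~\ref{tab:newArithmetic} evaluates $[\lambda_k^\dagger]$ using a fixed (i.e.\ $n$- and $t$-independent) number of interval additions, scalar interval multiplications, and applications of the operators $[\Lambda_s]$, $[\Lambda_t]$, $[\Lambda_r]$, and $[\Lambda_\star]$. By their definitions in~\eqref{eq:lambdaaaT}, \eqref{eq:Lambda_abbaNotation}, \eqref{eq:LambdaR}, and~\eqref{eq:intervalOperatorLambdaStar}, the operators $[\Lambda_r]$ and $[\Lambda_\star]$ act on scalar intervals and cost $\Order(1)$, whereas $[\Lambda_s]$ and $[\Lambda_t]$ are applied to reduced gradients $[\grad_{\LinVars_k^c} y_i]$ of dimension $|\LinVars_k^c|\le n$ and require a summation of $\max\{\lb{a}_i^2,\ub{a}_i^2\}$ over these at most $n$ components (plus a single square root for $[\Lambda_t]$); hence each costs $\Order(n)$. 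Selecting the applicable rule — for instance one of the $17$ cases of Tab.~\ref{tab:CasesReducedBoundsProduct} for a {\tt mul} line — requires testing a constant number of conditions, each an index-set comparison of cost $\Order(n)$, and extracting a reduced gradient from the already-available full gradient $[\grad y_i]$ is a component selection of cost $\Order(n)$. Consequently (d) is $\Order(n)$, so lines (a)--(d) together cost $\Order(n)$.

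Summing the $\Order(n)$ per-line cost over the $t=N(\func)$ operation lines yields $\Order(n)\,N(\func)$. The one-time conversion $[\lambda_{n+t}]\leftarrow([\lambda_{n+t}^\dagger],\LinVars_{n+t})$ in step~3 via~\eqref{eq:fromSparseToFull} and the initialization in step~1 are accounted for exactly as in \cite[Prop. 4.4]{Monnigmann2011a} and do not increase the leading order. The main obstacle is thus precisely to confirm that the sparsity machinery fits inside the original $\Order(n)$ per-line budget; once the reduced operators $[\Lambda_s]$ and $[\Lambda_t]$ are seen to act on vectors of dimension at most $n$, and the case distinction in Tab.~\ref{tab:newArithmetic} is seen to be of constant size with each condition costing $\Order(n)$, the claimed bound follows.
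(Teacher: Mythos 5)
Your proposal is correct and follows essentially the same route as the paper: bound each line of the extended codelist by $\Order(n)$ basic operations (the key point being that $[\Lambda_s]$ and $[\Lambda_t]$ act on reduced gradients of dimension at most $n$, while $[\Lambda_r]$ and $[\Lambda_\star]$ are $\Order(1)$), sum over the $N(\func)$ lines, and account for $[\func]$ and $[\grad\func]$ by standard AD/IA results. The only (harmless) difference is that you charge the index-set computation and the case selection to the evaluation of each line and show they still fit the $\Order(n)$ budget, whereas the paper observes that $\IndepOfVars_k$ and $\LinVars_k$ are independent of $\hyperRec$ and hence are determined once at codelist-construction time, so they need not be counted in the evaluation cost at all.
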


Since the proof of Prop.~\ref{prop:numericalComplexity} is very similar to that of \cite[Prop. 4.4]{Monnigmann2011a}, we only sketch it. 
The extended codelist that results from Prop.~\ref{prop:newArithmetic} involves the index sets $\IndepOfVars_k$ and $\LinVars_k$, which were not required in the original method. These index sets do not depend on the particular hyperrectangle~$\hyperRec$ as pointed out in Exmp.~\ref{exmp:RevisitEx1UsingSparsity}, but they are uniquely determined by the function $\func$ itself. Consequently, all index sets need to be determined only once. This step can be carried out at the time of construction of the extended codelist. In particular, it need not be repeated at the time of evaluating the codelist for a particular $\hyperRec$. 
Once $\IndepOfVars_k$ and $\LinVars_k$ have been determined, each line of the extended codelist is specified by the rules in Tab.~\ref{tab:newArithmetic} (and Tab.~\ref{tab:oldArithmetic}).
It is easy to show that the evaluation of every expression in the second column of Tab.~\ref{tab:newArithmetic} requires at most $\Order(n)$ basic operations (like additions, multiplications, or comparisons of two real numbers; see \cite[Sect. 4.1]{Monnigmann2011a} for further details). Thus, under the assumption that $[\func]$ and $[\grad \func]$ are known, we need $\Order(n)\,N(\func)$ basic operations for the computation of $[\lambda_\func]$. Since the calculation of $[\func]$ and $[\grad \func]$ require $\Order(1)$ and $\Order(n)$ basic operations according to standard results in AD and IA (see, e.g., \cite{Fischer1995,Kearfott1996}), we obtain $N ([\func], [\grad \func], [\lambda_\func]) = \Order(1)\,N(\func)+\Order(n)\,N(\func)+\Order(n)\,N(\func)=\Order(n)\,N(\func).$

\section{Numerical experiments for a large number of examples}
\label{sec:Examples}

In this section, we analyze 1522 numerical examples taken from the COCONUT collection of optimization problems \cite{Shcherbina2003}.
We consider all COCONUT problems with $1<n\leq 10$ variables and extract those cost and constraint functions that can be decomposed into the operations listed 
in Tabs.~\ref{tab:oldArithmetic} and~\ref{tab:newArithmetic}. For each function $\func: \R^n \rightarrow \R$, we consider 100 (randomly generated) hyperrectangles $\hyperRec \subset \mathcal{D}$ in the domain $\mathcal{D}$ of $\func$ specified in the respective COCONUT problem. 
For ease of comparison, the set of examples as well as the associated hyperrectangles are identical to the examples considered in \cite{SchulzeDarup2014JGO}.

For each of the resulting $1522 \cdot 100$ sample problems, we solve problem~\eqref{eq:EigenboundsOnBox} using the improved algorithm  ($\ANew$ for short) in Prop.~\ref{prop:newArithmetic}.
We compare the resulting eigenvalue bounds with those obtained from two established methods using interval Hessians (see problem~\eqref{eq:symIntervalMatrix}) and either Gershgorin's circle criterion ($\Ge$ for short) or Hertz and Rohn's method ($\HR$ for short) for the computation of spectral bounds of interval matrices (see problem~\eqref{eq:EigenboundsSymIntervalMatrix}). 
We choose $\Ge$ and $\HR$ as reference procedures due to the favorable computational complexity of $\Ge$ and since $\HR$ provides tight eigenvalue bounds for problem~\eqref{eq:EigenboundsSymIntervalMatrix} (cf. Sect.~\ref{sec:Introduction}).  
We refer to the original papers \cite{Adjiman1998a,Gershgorin1931,Hertz1992,Rohn1994} or the summaries in~\cite{Monnigmann2011a,SchulzeDarup2014JGO} for a detailed description of methods $\Ge$ and $\HR$.

\begin{table}[h] 
\centering 
\small
\caption{Classes used to aggregate results in Tab.~\ref{tab:dimensionTable}. Symbols $[\lambda_\ANew]=[\lb{\lambda}_\ANew,\ub{\lambda}_\ANew]$, $[\lambda_\Ge]=[\lb{\lambda}_\Ge,\ub{\lambda}_\Ge]$,
and $[\lambda_\HR]=[\lb{\lambda}_\HR,\ub{\lambda}_\HR]$ denote eigenvalue bounds calculated by the improved algorithm $\ANew$ (see Prop.~\ref{prop:newArithmetic}), Gershgorin's circle criterion and Hertz and Rohn's method, respectively.}
\label{tab:casesForComparison}
\begin{tabular}{c@{\,\,}|l|l|l}
class & verbal definition & \multicolumn{2}{c}{formal definition}\\
 & & lower bound $\lb{\lambda}$ & upper bound $\ub{\lambda}$\\
\hline
\hline
1 & $\ANew$ worse than $\Ge$ (and $\HR$) & $\lb{\lambda}_\ANew < \lb{\lambda}_\Ge \leq \lb{\lambda}_\HR$& $\ub{\lambda}_\HR \leq \ub{\lambda}_\Ge < \ub{\lambda}_\ANew$ \\
2 & $\ANew$ equal to $\Ge$ but worse than $\HR$ & $\lb{\lambda}_\Ge \approx \lb{\lambda}_\ANew < \lb{\lambda}_\HR$ & $\ub{\lambda}_\HR < \ub{\lambda}_\ANew \approx \ub{\lambda}_\Ge $ \\
3 & $\ANew$ better than $\Ge$  but worse than $\HR$ &  $\lb{\lambda}_\Ge < \lb{\lambda}_\ANew < \lb{\lambda}_\HR$ & $\ub{\lambda}_\HR < \ub{\lambda}_\ANew < \ub{\lambda}_\Ge$ \\
4 & $\ANew$ equal to $\HR$ (and equal to or better than $\Ge$) & $\lb{\lambda}_\Ge \leq \lb{\lambda}_\HR \approx \lb{\lambda}_\ANew$ & $\ub{\lambda}_\ANew \approx \ub{\lambda}_\HR \leq \ub{\lambda}_\Ge $ \\
5 & $\ANew$ better than $\HR$ (and $\Ge$) & $\lb{\lambda}_\Ge \leq \lb{\lambda}_\HR < \lb{\lambda}_\ANew$ & $\ub{\lambda}_\ANew < \ub{\lambda}_\HR \leq \ub{\lambda}_\Ge$ \\
\end{tabular}
\end{table} 

For each sample problem, we analyze whether $\ANew$ performs better than, equally good as, or worse than $\Ge$ and $\HR$.
We independently compare the lower and upper eigenvalue bounds of the particular methods and 
categorize the results according to the five classes in Tab.~\ref{tab:casesForComparison}.
Note that $\Ge$ never performs better than $\HR$ (since $\HR$ provides tight bounds for~\eqref{eq:EigenboundsSymIntervalMatrix}). 
Consequently, the relations $\lb{\lambda}_\Ge \leq \lb{\lambda}_\HR$ and $\ub{\lambda}_\HR \leq \ub{\lambda}_\Ge$ always hold.
Hence, the list of classes in Tab.~\ref{tab:casesForComparison} is complete in the sense that every example can be uniquely classified into one of the five classes.
It remains to comment on the precise meaning of $a>b$ and $a \approx b$ as used for the classification in Tab.~\ref{tab:casesForComparison}.
To this end, we introduce the function
$$
\dev(a,b) = \frac{a-b}{1+0.5\,|a+b|}
$$
which evaluates a weighted difference of $a,b \in \R$. Based on $\dev(a,b)$, we define
\begin{equation}
\label{eq:ComparisonEps}
a > b \quad  \Longleftrightarrow \quad \dev(a,b) > \epsilon \quad \text{and} \quad a \approx b \quad  \Longleftrightarrow \quad |\dev(a,b)| \leq \epsilon,
\end{equation}
where $\epsilon \in \R_+$ represents an error bound.
Note that $|\dev(a,b)|$ is approximately equal to the relative error for two large but almost equal numbers $a,b \in \R$ and almost equal to the absolute error for two small but almost equal numbers $a,b \in \R$.
This behavior is useful since the absolute values of the computed eigenvalue bounds range across multiple magnitudes. 
\begin{table}[htp] 
\caption{Numerical results for $1522 \cdot 100$ sample problems. The classification of the results is carried out according to Tab.~\ref{tab:casesForComparison} and Eq.~\eqref{eq:ComparisonEps} with $\epsilon = 10^{-6}$. For each class, the left respectively right column refers to eigenvalue bounds obtained by the original algorithm $\AOld$ from~\cite{Monnigmann2011a} and the improved variant $\ANew$ taking sparsity into account.} 
\label{tab:dimensionTable} 
\centering 
\begin{tabular}{r@{\,\,}r|rr|rr|rr|rr|rr} 
\multicolumn{2}{c|}{examples}  & \multicolumn{2}{c|}{$1$} & \multicolumn{2}{c|}{$2$} & \multicolumn{2}{c|}{$3$} & \multicolumn{2}{c|}{$4$} & \multicolumn{2}{c}{$5$} \\ 
$n$ & $\#$ & \multicolumn{1}{c}{$\AOld$} & \multicolumn{1}{c|}{$\ANew$} & \multicolumn{1}{c}{$\AOld$} & \multicolumn{1}{c|}{$\ANew$} & \multicolumn{1}{c}{$\AOld$} & \multicolumn{1}{c|}{$\ANew$} & \multicolumn{1}{c}{$\AOld$} & \multicolumn{1}{c|}{$\ANew$} & \multicolumn{1}{c}{$\AOld$} & \multicolumn{1}{c}{$\ANew$} \\
\hline 
\hline 
2 & 62& 56.37& 29.06& 1.07& 1.08& 12.24& 16.06& 18.89& 41.27& 11.43& 12.54\\
3 & 1078& 2.98& 0.95& 77.53& 0.31& 0.85& 1.01& 17.70& 96.72& 0.95& 1.01\\
4 & 67& 60.75& 35.28& 4.84& 4.63& 8.45& 16.76& 15.04& 32.14& 10.92& 11.19\\
5 & 88& 56.80& 35.34& 3.32& 0.21& 10.16& 14.90& 14.91& 34.47& 14.81& 15.09\\
6 & 95& 35.05& 25.65& 5.31& 4.13& 31.91& 34.25& 13.87& 21.38& 13.86& 14.58\\
7 & 27& 65.80& 27.07& 11.93& 8.50& 0.02& 19.44& 22.26& 44.98& 0.00& 0.00\\
8 & 15& 94.23& 63.83& 3.20& 4.23& 1.27& 29.80& 1.30& 1.53& 0.00& 0.60\\
9 & 24& 57.27& 25.02& 8.29& 0.02& 18.71& 34.94& 4.25& 22.58& 11.48& 17.44\\
10 & 66& 51.17& 12.24& 1.00& 0.22& 22.12& 34.11& 15.86& 41.54& 9.86& 11.89\\
\hline 
all & 1522& 17.77& 9.09& 56.11& 0.94& 5.32& 7.78& 16.86& 77.90& 3.95& 4.30
\end{tabular} 
\end{table} 

We summarize numerical results for the analyzed examples in Tab.~\ref{tab:dimensionTable} (with $\epsilon = 10^{-6}$).
We list the percentage of samples that fall into the classes 1 to 5 from Tab.~\ref{tab:casesForComparison} separated by dimension $n$ of the underlying example.
In order to compare the improved algorithm in Prop.~\ref{prop:newArithmetic} to the original method from~\cite{Monnigmann2011a} (see Thm.~\ref{thm:directArithmetic}), 
we also list the classification results using the original algorithm ($\AOld$ for short).
The numerical results confirm that the consideration of sparsity significantly improves the tightness of the computed eigenvalue bounds. 
To see this, note that for each dimension $n$, the percentages in class 1 (where the established approaches outperform the direct computation of eigenvalue bounds) decrease while the percentages in classes 4 and 5 (where the direct computation of eigenvalue bounds performs as good as or better than Hertz and Rohn's method) increases using the improved algorithm $\ANew$ instead of the original $\AOld$.
In particular, it is remarkable that the improved algorithm $\ANew$ results in worse eigenvalue bounds than $\Ge$ in only $9.09\,\%$ of all cases in contrast to 
$17.77\,\%$ for the original method $\AOld$. Moreover, $\ANew$ provides equally good or better eigenvalue bounds than $\HR$ in $82.20\,\% = 77.90\,\% +4.30\,\%$ of all cases while the corresponding percentage only reads $20.83\,\% = 16.86\,\% +3.95\,\%$ for $\AOld$.

Another observation is that the ratios in the particular classes seem to be independent of the dimension $n$ (i.e., there is no trend).
This is important since the numerical complexities of the established approaches $\Ge$ and $\HR$ 
vary between $\Order(n)\,N(\func)+\Order(n^2)$ and $\Order(n^2)\,N(\func)+ \Order(2^{n}\,n^3)$ operations (see Sect.~\ref{sec:Introduction} and the benchmark in \cite{SchulzeDarup2014JGO}), while the direct eigenvalue bound computation requires $\Order(n)\,N(\func)$.
Thus, methods $\AOld$ and $\ANew$ become numerically very attractive for high dimensions $n$.

\begin{table}[h] 
\caption{The last line of Tab.~\ref{tab:dimensionTable} for different choices of the error bound $\epsilon$ in~\eqref{eq:ComparisonEps}.} 
\label{tab:epsTable} 
\centering 
\begin{tabular}{c|rr|rr|rr|rr|rr} 
  & \multicolumn{2}{c|}{$1$} & \multicolumn{2}{c|}{$2$} & \multicolumn{2}{c|}{$3$} & \multicolumn{2}{c|}{$4$} & \multicolumn{2}{c}{$5$} \\ 
$\epsilon$ & \multicolumn{1}{|c}{$\AOld$} & \multicolumn{1}{c|}{$\ANew$} & \multicolumn{1}{|c}{$\AOld$} & \multicolumn{1}{c|}{$\ANew$} & \multicolumn{1}{|c}{$\AOld$} & \multicolumn{1}{c|}{$\ANew$} & \multicolumn{1}{|c}{$\AOld$} & \multicolumn{1}{c|}{$\ANew$} & \multicolumn{1}{|c}{$\AOld$} & \multicolumn{1}{c}{$\ANew$} \\
\hline 
\hline
$10^{-5}$ &  17.16& 8.78& 54.11& 0.91& 4.29& 6.57& 20.78& 79.74& 3.66& 4.01 \\
$10^{-6}$ &  17.77& 9.09& 56.11& 0.94& 5.32& 7.78& 16.86& 77.90& 3.95& 4.30 \\
$10^{-7}$ & 18.04& 9.28& 57.62& 0.66& 5.61& 8.24& 14.67& 77.41& 4.06& 4.41
\end{tabular} 
\end{table}

According to~\eqref{eq:ComparisonEps}, the classification in Tab.~\ref{tab:dimensionTable} depends on the choice of the error bound $\epsilon$.
We repeated all calculations for various choices of $\epsilon$ and present the results reported in the last line of Tab.~\ref{tab:dimensionTable} for $\epsilon=10^{-5}$ and $\epsilon=10^{-7}$ in Tab.~\ref{tab:epsTable}.
As expected, the ratios in classes 1 and 5 increase for decreasing $\epsilon$, since we detect $\lb{\lambda}_\ANew < \lb{\lambda}_\Ge$ (as well as $\ub{\lambda}_\Ge < \ub{\lambda}_\ANew$, $\lb{\lambda}_\HR < \lb{\lambda}_\ANew$, and $\ub{\lambda}_\ANew < \ub{\lambda}_\HR$) for a larger number of examples (cf. \eqref{eq:ComparisonEps}).
However, beside this observation, the results are robust w.r.t.\ the value of $\epsilon$.  

\section{Conclusion}
\label{sec:Conclusion}

We significantly improved a method recently introduced in~\cite{Monnigmann2011a} for the efficient computation of spectral bounds
for Hessian matrices of twice continuously differentiable functions on hyperrectangles. 
The improvements build on the identification and utilization of sparsity that naturally arises in the first lines of every codelist for a function $\func:\R^n \rightarrow \R$.  

The improved method was applied to a set of 1522 examples previously analyzed in~\cite{SchulzeDarup2014JGO}.
The numerical results show that the consideration of sparsity results in significantly tighter eigenvalue bounds.
In fact, the improved method provided equally good or better eigenvalue bounds than Hertz and Rohn's method in $82.20\,\%$ of the examples while the corresponding percentage only reads $20.83\,\%$ for the original procedure.

In addition to illustrating the practical usefulness of the proposed improvements, we provided an important theoretic result.
In fact, it is well-known that the original method from~\cite{Monnigmann2011a} results in spectral bounds with $0 \in [\lambda_\func]$ for any function that involves the multiplications of two or more variables (see \cite[Rem. 4.3]{Monnigmann2011a}). Consequently, convex functions that involve such a multiplication cannot be detected to be convex with the original method. 
We showed that this restrictions does not apply for the improved method.

\section*{Acknowledgements}
Funding by Deutsche Forschungsgemeinschaft grant MO-1086/9 is gratefully acknowledged.

\bibliographystyle{abbrv}
\bibliography{literatur,mmo,msdLiterature} 

\appendix

\section{Supplementary results}

\begin{lemma}
\label{lem:specialMatrix}
Let $[a],[b],[c] \subset \R$ be real intervals, let $[\Lambda_\star([a], [b],[c])]$ be defined as in~\eqref{eq:intervalOperatorLambdaStar}, and
consider the matrix set 
$$\mathcal{H} = \{ H \in \R^{2 \times 2} \,|\, H_{11} \in [a], \,H_{22} \in [b],\, H_{12} \in [c],\, H=H^T \}.$$
Then, 
\begin{equation}
\label{eq:specialMatrixBounds}
\lb{\Lambda}_\star([a],[b],[c]) = \min_{H \in \mathcal{H}} \lambda_{\min}(H) \quad \textrm{and} \quad
\max_{H \in \mathcal{H}}  \lambda_{\max}(H) = \ub{\Lambda}_\star([a],[b],[c]).
\end{equation}
\end{lemma}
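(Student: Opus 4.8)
The plan is to reduce everything to the closed-form expression for the eigenvalues of a symmetric $2 \times 2$ matrix and then to optimize a scalar function of three bounded parameters. Writing a generic element of $\mathcal{H}$ as $H = \begin{pmatrix} p & q \\ q & r \end{pmatrix}$ with $p \in [a]$, $r \in [b]$, and $q \in [c]$, its two eigenvalues are
$$
\lambda_{\pm}(p,q,r) = \frac{p+r}{2} \pm \frac{1}{2}\sqrt{(p-r)^2 + 4\,q^2}.
$$
Hence the first equality in~\eqref{eq:specialMatrixBounds} amounts to minimizing $\lambda_{-}$ over the box $[a] \times [c] \times [b]$, and the second to maximizing $\lambda_{+}$ over the same box. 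Both are scalar optimization problems over a compact set, so minima and maxima exist and it suffices to locate the optimal corner.

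For the minimization I would first note that $q$ enters $\lambda_{-}$ only through the term $-\tfrac{1}{2}\sqrt{(p-r)^2 + 4q^2}$, which is monotonically decreasing in $q^2$; the minimum over $q \in [c]$ is therefore attained where $q^2 = \max\{\lb{c}^2,\ub{c}^2\}$, i.e. where $4q^2 = d$. With $d$ fixed, it remains to minimize $g(p,r) = \tfrac{1}{2}(p+r) - \tfrac{1}{2}\sqrt{(p-r)^2 + d}$ over $p \in [a]$, $r \in [b]$. The key observation is that $g$ is nondecreasing in each argument: since $t \mapsto \sqrt{t^2 + d}$ is $1$-Lipschitz, increasing $p$ by $\delta > 0$ raises $\tfrac{1}{2}(p+r)$ by $\delta/2$ while lowering $\tfrac{1}{2}\sqrt{(p-r)^2 + d}$ by at most $\delta/2$, so $g$ cannot decrease, and the same holds for $r$. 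Consequently the minimum sits at the corner $p = \lb{a}$, $r = \lb{b}$, which upon substitution yields exactly $\lb{\Lambda}_\star([a],[b],[c])$ from~\eqref{eq:intervalOperatorLambdaStar}.

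The maximization of $\lambda_{+}$ is entirely symmetric. Choosing $q^2 = \max\{\lb{c}^2,\ub{c}^2\}$ again maximizes the now additive term $+\tfrac{1}{2}\sqrt{(p-r)^2 + 4q^2}$, and the identical $1$-Lipschitz argument shows that $h(p,r) = \tfrac{1}{2}(p+r) + \tfrac{1}{2}\sqrt{(p-r)^2 + d}$ is nondecreasing in both $p$ and $r$. The maximum therefore sits at $p = \ub{a}$, $r = \ub{b}$, giving $\ub{\Lambda}_\star([a],[b],[c])$. Substituting the two optimal corners into the eigenvalue formula and matching them against~\eqref{eq:intervalOperatorLambdaStar} closes both claims.

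I expect the only delicate point to be the monotonicity step in the degenerate case $d = 0$, which occurs precisely when $\lb{c} = \ub{c} = 0$; there $\sqrt{(p-r)^2 + d} = |p-r|$ fails to be differentiable at $p = r$, so a naive partial-derivative computation would break down. Phrasing the monotonicity through the $1$-Lipschitz property of $t \mapsto \sqrt{t^2 + d}$ rather than through derivatives sidesteps this issue and handles all $d \geq 0$ uniformly. Alternatively, one may dispatch the case $[c] = \{0\}$ separately: then $H$ is diagonal, so the bounds reduce to $\min\{\lb{a},\lb{b}\}$ and $\max\{\ub{a},\ub{b}\}$, which coincide with the values $\tfrac{1}{2}(\lb{a}+\lb{b}) - \tfrac{1}{2}|\lb{a}-\lb{b}|$ and $\tfrac{1}{2}(\ub{a}+\ub{b}) + \tfrac{1}{2}|\ub{a}-\ub{b}|$ delivered by the formula, confirming consistency.
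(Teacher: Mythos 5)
Your proposal is correct and follows the same overall strategy as the paper: write the closed-form eigenvalues of a symmetric $2\times 2$ matrix, observe that $c$ enters only through the radicand so that the extremal choice is $4c^2 = d = 4\max\{\lb{c}^2,\ub{c}^2\}$, and then optimize $\tfrac12\bigl(p+r \mp \sqrt{(p-r)^2+d}\bigr)$ over the box $[a]\times[b]$. The one genuine difference is in how the optimal corner is located. The paper argues that $f(p,r)=p+r-\sqrt{(p-r)^2+d}$ is concave, so the minimum over the convex box is attained at a vertex, and then asserts that among the four vertices $(\lb{a},\lb{b})$ gives the smallest value, leaving that comparison as ``easy to show.'' You instead prove coordinate-wise monotonicity directly: since $t\mapsto\sqrt{t^2+d}$ is $1$-Lipschitz, increasing $p$ (or $r$) by $\delta$ raises the linear part by $\delta/2$ and lowers the square-root part by at most $\delta/2$, so the objective is nondecreasing in each argument and the minimizer is $(\lb{a},\lb{b})$ outright. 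This buys two things: it identifies the corner in one step without enumerating vertices (and in fact the vertex comparison the paper omits is exactly this monotonicity fact, so your argument supplies the detail the paper skips), and, as you note, it is phrased so as to cover the nonsmooth case $d=0$ uniformly, where a derivative-based concavity-plus-comparison argument would need a separate remark at $p=r$. Both routes are valid; the paper's is marginally shorter on the page, yours is marginally more self-contained.
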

\begin{proof}
The eigenvalue bounds of a symmetric matrix $H=\left(\begin{smallmatrix}
a&c\\ c&b
\end{smallmatrix}\right) \in \mathcal{H}$  read 
$$
\lambda_{\min}(H) = \frac{1}{2}\, \left(  a +b - \sqrt{(a-b)^2 + 4\,c^2 } \right), \quad \lambda_{\max}(H) = \frac{1}{2}\, \left(  a +b + \sqrt{(a-b)^2 + 4\,c^2} \right).
$$
We therefore have to show that
\begin{align}
\label{eq:specialMatrixLowerBound}
 \lb{a} + \lb{b} - \sqrt{(\lb{a}-\lb{b})^2 + d} &= \min_{a \in [a], b\in[b], c\in[c]} a +b - \sqrt{(a-b)^2 + 4\,c^2 } \quad \text{and} \\
 \label{eq:specialMatrixUpperBound}
 \ub{a} + \ub{b} + \sqrt{(\ub{a}-\ub{b})^2 + d} &= \max_{a \in [a], b\in[b], c\in[c]} a +b + \sqrt{(a-b)^2 + 4\,c^2 },
\end{align}
where the l.h.s.~results from~\eqref{eq:intervalOperatorLambdaStar} and where $d = 4\,\max \{\lb{c}^2,\ub{c}^2\}$.
We show that~\eqref{eq:specialMatrixLowerBound} holds and claim~\eqref{eq:specialMatrixUpperBound} can be proven analogously.
First note that the r.h.s.~in~\eqref{eq:specialMatrixLowerBound} can be simplified to
\begin{equation}
\label{eq:eliminateC}
\min_{a \in [a], b\in[b], c\in[c]} a +b - \sqrt{(a-b)^2 + 4\,c^2 } = \min_{a \in [a], b\in[b]} a +b - \sqrt{(a-b)^2 + d }
\end{equation}
since $c$ only occurs in the radicand. Consider the function $f:\R^2 \rightarrow \R$, $f(a,b)= a+b - \sqrt{(a-b)^2 + d}$, which occurs on the r.h.s.~of~\eqref{eq:eliminateC}
and note that $f(a,b)$ is concave (since $g(a,b)=\sqrt{(a-b)^2 + d}$ is convex).
Since the hyperrectangle $\hyperRec = [a] \times [b]$ is convex, the minimum on the r.h.s.~of~\eqref{eq:eliminateC} is attained at one of the vertices of $\hyperRec$.
Among the candidate tuples $(\lb{a},\lb{b})$, $(\lb{a},\ub{b})$, $(\ub{a},\lb{b})$, and $(\ub{a},\ub{b})$, it is easy to show that $(\lb{a},\lb{b})$ results in the smallest function value, i.e., $f(\lb{a},\lb{b}) \leq \min \{f(\lb{a},\ub{b}),f(\ub{a},\lb{b}),f(\ub{a},\ub{b})\}$. Thus, \eqref{eq:specialMatrixLowerBound} holds.
\end{proof}

\begin{table}[htp] 
\vspace{-8mm}
\caption{Rules for the calculation of the eigenvalue bounds $[\lambda_k^\dagger]$ in the $k$-th line of the codelist \eqref{eq:codelist} according to Prop.~\ref{prop:newArithmetic}.
The intervals $[\lambda_t]$, $[\lambda_{i,0}]$ and $[\lambda_{j,0}]$ are shorthand notations for $[\lambda_t]=[\Lambda_t([\grad_{\LinVars_k^c} y_i],[\grad_{\LinVars_k^c} y_j])]$, $[\lambda_{i,0}]=[\min \{\lb{\lambda}_i^\dagger,0\}, \max \{\ub{\lambda}_i^\dagger,0\} ]$, and $[\lambda_{j,0}]=[\min \{\lb{\lambda}_j^\dagger,0\}, \max \{\ub{\lambda}_j^\dagger,0\} ]$.
The index sets $\LinVars_\cup$ and $\LinVars_\cap$ are shorthand notations for $\LinVars_\cup = \LinVars_i \cup \LinVars_j$ and $\LinVars_\cap = \LinVars_i \cap \LinVars_j$.  Condition $C_\star$ reads  $(\IndepOfVars_i \cup \IndepOfVars_j = \setN) \wedge (|\IndepOfVars_i|=n-1) \wedge (|\IndepOfVars_j|=n-1)$.}
\label{tab:newArithmetic}
    \centering
\small
\begin{tabular}{l@{\,\,}|l@{\,\,}|l@{\,\,}} 
  {\tt op} $\Phi_k$ &  $[\lambda_k^\dagger]$   & condition \\ 
  \hline \hline 
  {\tt add}    &     $[0,0]$   & $\LinVars_i = \setN  \wedge \LinVars_j =  \setN$ \\
 & $[\lambda_i^\dagger]$  & $\LinVars_i \subset \setN  \wedge \LinVars_j= \setN$ \\
 & $[\lambda_j^\dagger]$  & $\LinVars_i = \setN  \wedge \LinVars_j \subset  \setN$ \\
  & $[\Lambda_r([\lambda_i^\dagger],[\lambda_j^\dagger])]$   & $\LinVars_i \subset \setN  \wedge \LinVars_j \subset  \setN \wedge \LinVars_\cup = \setN$\\
 & $[\lambda_i^\dagger]+[\lambda_j^\dagger]$    & $\LinVars_\cup \subset \setN \wedge \LinVars_i = \LinVars_j$ \\ 
  & $[\lambda_i^\dagger]+[\lambda_{j,0}]$    & $\LinVars_\cup \subset \setN \wedge \LinVars_i \subset \LinVars_j$ \\ 
 & $[\lambda_{i,0}]+[\lambda_j^\dagger]$    & $\LinVars_\cup \subset \setN \wedge \LinVars_j \subset \LinVars_i$ \\ 
  & $[\lambda_{i,0}]+[\lambda_{j,0}]$    & $\LinVars_\cup \subset \setN \wedge \LinVars_i \nsubseteq \LinVars_j \wedge \LinVars_j \nsubseteq \LinVars_i$ \\ 
      \hline
  
  {\tt mul}          & $[\lambda_t]$ & $\LinVars_i = \setN  \wedge \LinVars_j= \setN$  \\
 &  $[\lambda_t]+[y_j]\,[\lambda_i^\dagger]$ & $\LinVars_i \subset \setN  \wedge \LinVars_j= \setN\wedge \LinVars_k = \LinVars_i $ \\ 
 &  $[\lambda_t]+[y_j]\,[\lambda_{i,0}]$ & $\LinVars_i \subset \setN  \wedge \LinVars_j= \setN \wedge \LinVars_k \subset \LinVars_i \wedge \neg C_\star$ \\
& $[\Lambda_\star([y_j]\,[\lambda_i^\dagger],[0,0],[\grad_{\IndepOfVars_i^c} y_i][\grad_{\IndepOfVars_j^c} y_j])]$ & $\LinVars_i \subset \setN  \wedge \LinVars_j= \setN  \wedge  C_\star $  \\ 
&  $[\lambda_t]+[y_i]\,[\lambda_j^\dagger]$ & $\LinVars_i = \setN  \wedge \LinVars_j \subset  \setN  \wedge \LinVars_k = \LinVars_j$  \\ 
 &  $[\lambda_t]+[y_i]\,[\lambda_{j,0}]$ & $\LinVars_i = \setN  \wedge \LinVars_j \subset  \setN \wedge \LinVars_k \subset \LinVars_j \wedge \neg C_\star$ \\
 & $[\Lambda_\star([0,0],[y_i]\,[\lambda_j^\dagger],[\grad_{\IndepOfVars_i^c} y_i][\grad_{\IndepOfVars_j^c} y_j])]$ & $\LinVars_i = \setN  \wedge \LinVars_j \subset \setN  \wedge  C_\star $  \\  
  &  $[\lambda_t]+[\Lambda_{r}([\Lambda_{r}([y_j]\,[\lambda_i^\dagger],[y_i]\,[\lambda_j^\dagger])],[0,0])]$ & $\LinVars_i \subset \setN  \wedge \LinVars_j \subset \setN \wedge\LinVars_\cup = \setN \wedge \LinVars_k \subset \LinVars_\cap $ \\
  &  $[\lambda_t]+[\Lambda_{r}([y_j]\,[\lambda_i^\dagger],[y_i]\,[\lambda_j^\dagger])]$  & $\LinVars_i \subset \setN  \wedge \LinVars_j \subset \setN \wedge \LinVars_\cup = \setN \wedge \LinVars_k = \LinVars_\cap \wedge \neg C_\star$ \\ 
 & $[\Lambda_\star([y_j]\,[\lambda_i^\dagger],[y_i]\,[\lambda_j^\dagger],[\grad_{\IndepOfVars_i^c} y_i][\grad_{\IndepOfVars_j^c} y_j])]$ & $\LinVars_i \subset \setN  \wedge \LinVars_j \subset \setN  \wedge  C_\star $  \\
  & $[\lambda_t]+[y_j]\,[\lambda_i^\dagger] +[y_i]\,[\lambda_j^\dagger]$ & $ \LinVars_\cup \subset \setN   \wedge \LinVars_k = \LinVars_i = \LinVars_j $  \\ 
 & $[\lambda_t]+[y_j]\,[\lambda_i^\dagger] +[y_i]\,[\lambda_{j,0}]$ & $ \LinVars_\cup \subset \setN   \wedge \LinVars_k = \LinVars_i \subset \LinVars_j $ \\ 
  & $[\lambda_t]+[y_j]\,[\lambda_{i,0}] +[y_i]\,[\lambda_j^\dagger]$ & $ \LinVars_\cup \subset \setN  \wedge \LinVars_k = \LinVars_j \subset \LinVars_i $ \\ 
   & $[\lambda_t]+[\Lambda_r([y_j]\,[\lambda_i^\dagger] +[y_i]\,[\lambda_j^\dagger],[0,0])]$ & $ \LinVars_\cup \subset \setN  \wedge \LinVars_k \subset \LinVars_i = \LinVars_j $ \\
 & $[\lambda_t]+[\Lambda_r([y_j]\,[\lambda_i^\dagger] +[y_i]\,[\lambda_{j,0}],[0,0])]$ & $ \LinVars_\cup \subset \setN   \wedge \LinVars_k \subset \LinVars_i \subset \LinVars_j $    \\ 
  & $[\lambda_t]+[\Lambda_r([y_j]\,[\lambda_{i,0}] +[y_i]\,[\lambda_j^\dagger],[0,0])]$ & $ \LinVars_\cup \subset \setN  \wedge \LinVars_k \subset \LinVars_j \subset \LinVars_i $ \\ 
 & $[\lambda_t]+[y_j]\,[\lambda_{i,0}] +[y_i]\,[\lambda_{j,0}]$ & $\LinVars_\cup \subset \setN \wedge  \LinVars_i \nsubseteq \LinVars_j \wedge \LinVars_j \nsubseteq \LinVars_i  $  \\  
\hline

  {\tt powNat}      & $m\,(m\!-\!1)\,[y_i]^{m-2}\,[\lambdaaaT([\grad_{\LinVars_k^c} y_i])]$  & $\LinVars_i = \setN$  \\ 
  &  $m\,[y_i]^{m-2}((m\!-\!1)[\lambdaaaT([\grad_{\LinVars_k^c} y_i])]\!+\![y_i]\,[\lambda_i^\dagger])$  & $\LinVars_i \subset \setN \wedge \LinVars_k = \LinVars_i$ \\ 
  &  $m\,[y_i]^{m-2}((m\!-\!1)[\lambdaaaT([\grad_{\LinVars_k^c} y_i])]\!+\![y_i]\,[\lambda_{i,0}])$  & $\LinVars_i \subset \setN \wedge \LinVars_k \subset \LinVars_i$ \\ 
\hline
  {\tt oneOver}      &  $2\,[y_k]^3\,[\lambdaaaT([\grad_{\LinVars_k^c} y_i])]$  & $\LinVars_i = \setN$ \\  
 &  $[y_k]^2\,(2\,[y_k]\,[\lambdaaaT([\grad_{\LinVars_k^c} y_i])]-[\lambda_i^\dagger])$&  $\LinVars_i \subset \setN \wedge \LinVars_k = \LinVars_i$ \\ 
 &  $[y_k]^2\,(2\,[y_k]\,[\lambdaaaT([\grad_{\LinVars_k^c} y_i])]-[\lambda_{i,0}])$&  $\LinVars_i \subset \setN \wedge \LinVars_k \subset \LinVars_i$  \\ 
\hline
  {\tt sqrt}       & $1/(-4\,[y_k]^3)\,[\lambdaaaT([\grad_{\LinVars_k^c} y_i])]$   & $\LinVars_i = \setN$ \\ 
  & $1/(2\,[y_k])(1/(-2\,[y_i])[\lambdaaaT([\grad_{\LinVars_k^c} y_i])]\!+\![\lambda_i^\dagger])$ & $\LinVars_i \subset \setN \wedge \LinVars_k = \LinVars_i$\\   
  & $1/(2\,[y_k])(1/(-2\,[y_i])[\lambdaaaT([\grad_{\LinVars_k^c} y_i])]\!+\![\lambda_{i,0}])$ & $\LinVars_i \subset \setN \wedge \LinVars_k \subset \LinVars_i$ \\  
\hline
  {\tt exp}       & $[y_k]\,[\lambdaaaT([\grad_{\LinVars_k^c} y_i])]$  & $\LinVars_i = \setN$ \\ 
 & $[y_k]\,([\lambdaaaT([\grad_{\LinVars_k^c} y_i])]+[\lambda_i^\dagger])$  & $\LinVars_i \subset \setN \wedge \LinVars_k = \LinVars_i$ \\
 & $[y_k]\,([\lambdaaaT([\grad_{\LinVars_k^c} y_i])]+[\lambda_{i,0}])$  & $\LinVars_i \subset \setN \wedge \LinVars_k \subset \LinVars_i$    \\
\hline
  {\tt ln}    &     $-1/[y_i]^2\,[\lambdaaaT([\grad_{\LinVars_k^c} y_i])]$  &$\LinVars_i = \setN$ \\   
&  $1/[y_i]\,([\lambda_i^\dagger]-1/[y_i]\,[\lambdaaaT([\grad_{\LinVars_k^c} y_i])])$  & $\LinVars_i \subset \setN \wedge \LinVars_k = \LinVars_i$\\
 &  $1/[y_i]\,([\lambda_{i,0}]-1/[y_i]\,[\lambdaaaT([\grad_{\LinVars_k^c} y_i])])$  & $\LinVars_i \subset \setN \wedge \LinVars_k \subset \LinVars_i$ \\
    \hline 
  {\tt addC}     &  $[0,0]$  & $\LinVars_i = \setN$   \\
 &  $[\lambda_i^\dagger]$  & $\LinVars_i \subset \setN$   \\
\hline
  {\tt mulByC}   &  $[0,0]$  & $\LinVars_i = \setN$   \\
 & $c\,[\lambda_i^\dagger]$ &  $\LinVars_i \subset \setN$  \\
\end{tabular}  
\end{table} 

\end{document}